\title{Hochschild cohomology and extensions of triangulated categories}
\author{Alessandro Lehmann}
\address[Alessandro Lehmann]{Universiteit Antwerpen, Departement Wiskunde, Middelheimcampus,
Middelheimlaan 1,
2020 Antwerp, Belgium}
\email{alessandro.lehmann@uantwerpen.be}
\address{SISSA, Via Bonomea 265, 34136 Trieste TS, Italy}
\email{alehmann@sissa.it}
\author{Wendy Lowen} 
\address[Wendy Lowen]{Universiteit Antwerpen, Departement Wiskunde, Middelheimcampus,
Middelheimlaan 1,
2020 Antwerp, Belgium}
\email{wendy.lowen@uantwerpen.be}
\thanks{
This project has received funding from the European Research Council (ERC) under the European Union’s Horizon 2020 research and innovation programme (grant agreement No. 817762).
}
\subjclass[2020]{18G80, 16E40 (Primary), 	18G70, 18N25 (Secondary)}
\keywords{}
\DeclareMathOperator{\Hom}{Hom}
\DeclareMathOperator{\id}{id}
\DeclareMathOperator{\Fun}{Fun}
\newcommand{\op}[1]{{#1}^{\mathrm{op}}}
\newcommand{\Hm}[1]{\Hom_{#1}}
\newcommand{\HH}{\operatorname{HH}}
\newcommand{\tow}[1]{\overset{#1}{\to}}
\newcommand{\T}{\mathcal{T}} 
\newcommand{\A}{\mathcal{A}} 
\newcommand{\B}{\mathcal{B}} 
\newcommand{\C}{\mathcal{C}} 
\newcommand{\I}{E} 
\newcommand{\cdef}{\operatorname{cDef}}
\newcommand{\CatDef}{\operatorname{CatDef}}
\newcommand{\ke}{k[\varepsilon]}
\newcommand{\Tw}{\operatorname{\Tw}}
\newcommand{\Ker}{\operatorname{Ker}}
\newcommand{\Img}{\operatorname{Im}}
\newcommand{\Coker}{\operatorname{Coker}}
\DeclareMathOperator{\Ext}{Ext}
\newcommand{\Mod}{\operatorname{-Mod}}
\newtheorem{Thm}{Theorem}[section]
\newtheorem*{Thm*}{Theorem}
\newtheorem{Lem}[Thm]{Lemma}
\newtheorem{Prop}[Thm]{Proposition}
\newtheorem{Cor}[Thm]{Corollary}
\theoremstyle{definition}
\newtheorem{Def}[Thm]{Definition}
\newtheorem{Ex}[Thm]{Example}
\theoremstyle{remark}
\newtheorem{Rem}[Thm]{Remark}
\begin{document}

\begin{abstract}
We define a notion of categorical first order deformations for (enhanced) triangulated categories. For a category $\T$, we show that there is a bijection between $\HH^2(\T)$ and the set of categorical deformations of $\T$. We show that in the case of curved deformations of dg algebras considered in \cite{MoritacDef}, the $1$-derived category of the deformation (introduced in \cite{Nder}) is a categorical deformation of the derived category of the base; the Hochschild class identified by this deformation is shown to restrict to the class defining the deformation of the algebra. As an application, we give a conceptual proof of the fact that (for a smooth base) the filtered derived category of a dg deformation yields a categorical resolution of the classical derived category.
\end{abstract}
\maketitle
\vspace{-10mm}

\begingroup
\small
\tableofcontents
\endgroup
\section{Introduction}

It is hard to overestimate the importance of triangulated categories in subjects ranging from algebraic geometry and representation theory to mathematical physics. In practice, one usually works with enhanced versions of these categories in order to be able to compute invariants and make various constructions, and we will be using the term triangulated category to signify a pretriangulated dg category. From the point of view of noncommutative algebraic geometry, one likes to view these categories as algebro-geometric objects in their own right. As such, it is surprising that no comprehensive deformation theory currently exists. 

Based upon the picture of Homological Mirror Symmetry due to Kontsevich, such a deformation theory is expected to be governed by Hochschild cohomology \cite{KontHMS}.
However, already in the compactly generated case, where categories are determined by a single algebraic generator, deformation theory is known to hit the curvature problem \cite{MoritaDef, keller2009nonvanishing, LowenCurvature}. This problem can be rephrased in the setting from \cite{DAGX} by saying that the elementary deformation theory of dg (or $A_\infty$-)categories does not constitute, in general, a formal moduli problem (FMP); its ``best approximation'' as a FMP corresponds, via the celebrated correspondence between deformation problems and DGLAs, to the Hochschild complex. Several partial solutions have been proposed to this problem, always using the fact that, under certain hypotheses, many Hochschild classes can be represented by an uncurved object; see for example \cite{LowenCurvature, Blanc_Katzarkov_Pandit_2018, HaidenCY} for the case of formal deformations of appropriately right-bounded objects, \cite{LOWEN2013441} for the case of schemes and \cite{Tst3} for deformations of dg categories with a nice $t$-structure. Still, for an arbitrary base it has long been known \cite{MoritaDef, DAGX} that uncurved deformations do not suffice to span the whole Hochschild cohomology. In \cite{MoritacDef}, the first author proposed a solution to this problem for the case of a dg algebra, which incorporates the naturally occurring curved deformations; this makes use of certain filtered derived categories, introduced by the authors in \cite{Nder}.

The main goal of the present paper is to give a direct deformation theoretical interpretation of the entire second Hochschild cohomology group of an arbitrary triangulated category. This passes through defining a notion of \emph{categorical deformation} of a triangulated category. As our first main result, we show that there is a bijection $\mu_\T$ between the set of categorical deformations of a triangulated category $\T$ and its second Hochschild cohomology group (Theorem \ref{HH2Defo}). This construction recovers the filtered derived category from \cite{Nder} in the compactly generated case: for a dg algebra $A$ we obtain a commutative square (Theorem \ref{mainth})
\begin{equation}\label{squarec}\begin{tikzcd}
	{\cdef_A(\ke)} & {\HH^2(A)} \\
	{\CatDef_{D(A)}(\ke)} & {\HH^2(D(A))}
	\arrow["\nu", from=1-1, to=1-2]
	\arrow["{D^\varepsilon(-)}"', from=1-1, to=2-1]
	\arrow["{\chi_A}", from=1-2, to=2-2]
	\arrow["\mu_{D(A)}", from=2-1, to=2-2]
\end{tikzcd}
\end{equation}
in which $\cdef_A(\ke)$ is the set of first order curved Morita deformations of $A$, $\CatDef_{D(A)}(\ke)$ is the set of (first order) categorical deformations of $D(A)$,  $\chi_A$ is the characteristic morphism from \cite{lowchar}, $\nu$ is the bijection from \cite{MoritacDef}, and the map $D^\varepsilon(-)$ associates to a curved Morita deformation its filtered derived category.

It should be pointed out that there is a qualitative difference between the \emph{curved deformations} of the base algebra $A$ -- studied in \cite{MoritacDef} -- and the \emph{categorical deformations} of its derived category $D(A)$ that we propose here; indeed, while the base algebra $A$ might have deformations that are curved -- and thus live outside of the realm of classical homological algebra -- a categorical deformation of $D(A)$ is \emph{always} a triangulated category -- by definition. This is one of the main novelties of the present paper: given an arbitrary Hochschild cocycle, even one that has nontrivial curvature component, our approach allows to construct a deformation which is again a dg category. This situation makes sense from the perspective of noncommutative algebraic geometry; a noncommutative space can be represented by different algebraic models: small ones -- for example, a dg algebra -- and large ones -- for example, its derived category. The point is that small and large models have qualitatively different deformation theories. While for a small model there is no escaping curvature -- there exist algebra deformations that are intrinsically curved, see \cite{MoritaDef} -- the large models offer enough flexibility to realize every deformation without leaving the world of triangulated categories. To obtain this, however, one must part with the idea that small models and large models should correspond via classical (or, for that matter, second kind) derived categories.
Rather, as we will explain next, our novel categorical deformations can be interpreted as a kind of categorification of the classical square zero extension 
\[
0 \rightarrow A \rightarrow A[\varepsilon] \rightarrow A \rightarrow 0
\]
associated to a first order algebra deformation.

In the first part of the paper, we define categorical deformations of a triangulated category $\T$ (\S \ref{defsec}) and their equivalences (\S \ref{equisec}), leading to the set
of categorical deformations up to equivalence featuring in the lower left corner of \eqref{squarec}.
We further construct the morphism $\mu_\T$ (\S \ref{equisec}) as well as its inverse (\S \ref{inverse}), establishing the bijection in \S \ref{proofof}.

Roughly speaking, a categorical deformation of $\T$ consists of a triangulated category $\T_\varepsilon$ together with two “extensions”:
a recollement
 \begin{equation}\label{recoll}
 \begin{tikzcd}
	\T & {\T_\varepsilon} & \T
	\arrow["i", hook, from=1-1, to=1-2]
	\arrow["\I", from=1-2, to=1-3]
	\arrow["K", curve={height=-22pt}, from=1-2, to=1-1]
	\arrow["Q"', curve={height=22pt}, from=1-2, to=1-1]
	\arrow["G"', curve={height=22pt}, from=1-3, to=1-2]
	\arrow[curve={height=-22pt}, from=1-3, to=1-2]
\end{tikzcd}    
 \end{equation}
together with a Yoneda 2-extension of functors that can be informally depicted as

\[
0\to \I \tow{\delta_1}K \tow{\alpha} Q \tow{\delta_2} \I \to 0.
\]

Briefly put, the Hochschild class corresponding to such a deformation is obtained by composing the latter with the functor $G$ and taking the associated class in \[
\Ext^2_{\Fun(\T, \T)}(\I G, \I G)\cong \Ext^2_{\Fun(\T, \T)}(\id_\T, \id_\T)\cong \HH^2(\T).
\]

Conversely, for an arbitrary Hochschild class in $\HH^2(\T)$ represented by \[
\id_\T[-1]\to \id_\T[1]
\] the corresponding deformation $\T_\varepsilon$ is constructed by gluing -- in the sense of \cite{Kuznetsov_2014} -- two copies of $\T$ along the cone of the 2-class. Note however that we are in a technically slightly more challenging setup than \cite{Kuznetsov_2014}, with the arrows in \eqref{recoll} depicting quasi-functors rather than actual dg functors.

In the second part of the paper, we prove the promised compatibility with the filtered derived categories from \cite{Nder}. To explain this in some detail, let $A$ be a dg algebra and consider the $1$-derived category $D^\varepsilon(A_\varepsilon)$ of the curved deformation $A_\varepsilon$ associated to a given Hochschild $2$-class of $A$. It is not hard to show that this naturally determines a categorical deformation of $D(A)$ -- indeed the notion we put forth is an abstraction of the recollement observed in \cite{Nder}.
The main difficulty in proving commutativity of \eqref{squarec} is finding an appropriate description of the functor $G$. Interestingly, $G$ is not a dg functor but has a nice description as an $A_\infty$-functor $A \to
D^\varepsilon(A_\varepsilon)$ which “picks” (a variation of) the object $\Gamma$ shown in \cite{Nder} to compactly generate $D^\varepsilon(A_\varepsilon)$ together with $A$ (see \S \ref{leftadj}). Note that the shape of $\Gamma$ as a “two-sided cone” has a history going back to \cite{LowenCurvature, keller2009nonvanishing} (see also \cite{Blanc_Katzarkov_Pandit_2018, HaidenCY}) whereas the incarnation we use here and the relation with the filtered derived category is a main novelty from \cite{Nder}.

Finally, in the short appendix we observe that by its construction from a Hochschild 2-class, a categorical deformation $\T_\varepsilon$ of a category $\T$ is “as singular” as the base category, thus reproving the observation from \cite{Nder} that for a smooth dg algebra $A$, the 1-derived category of an (uncurved) dg deformation $A_\varepsilon$ of $A$ is a categorical resolution of the classical derived category $D(A_\varepsilon)$. This also shows a peculiar consequence of the theory: it follows immediately from the facts above that the derived category of a dg deformation is equivalent to a localization of the category obtained by gluing two copies of $D(A)$ along the cone of the Hochschild class induced by the deformation.

Let us highlight that the choice was made to work with the least amount possible of higher categorical machinery. Indeed, while to even define the Hochschild cohomology it is necessary to work with enhanced categories and not with their (triangulated) homotopy categories, we do little more than that: in particular, when talking about functors between higher categories, we will only consider them as objects of the \emph{homotopy category} of quasi-functors \cite{toen_derived_morita}, i.e. the derived category of bimodules or, equivalently, of the homotopy category of $A_\infty$-functors \cite{Ainffunct}. We will never make use of any enhancement of this category, instead only using its triangulated structure. This has the significant advantage of making the definitions simpler, and the proofs (arguably) more transparent. However, this approach leads to a theory that is less well-equipped to dealing with certain questions: for example, showing that $\HH^1(\T)$ parametrizes infinitesimal automorphism of a deformation necessarily requires using the full higher categorical structure of the functor category. We do not deal with these issues in the present paper; however, a more detailed discussion of the topic is present in \S \ref{coherentstuff}. Let us end this introduction by pointing out that the present results for first order deformations of pretriangulated dg categories constitute a first step in a program to interpret the full Hochschild complex, or rather its associated FMP, in terms of classification of (noncommutative) geometric objects. This would necessarily pass through generalizing the notion of categorical deformation to more general bases. This is work in progress.

\vspace{4mm}

\emph{Acknowledgements.} The first author is indebted to Dmitry Kaledin for a conversation at MFO, Oberwolfach in the spring of 2024 which helped shape some of the ideas in this paper. He also thanks Merlin Christ, Tobias Dyckerhoff, Julian Holstein, Bernhard Keller, Alexander Kuznetsov and Nicolò Sibilla for several helpful discussions.

The second author is deeply grateful to Dmitry Kaledin, Bernhard Keller and Michel Van den Bergh
for numerous illuminating conversations throughout the years on the topic of the curvature problem, and specifically for the collaborations on \cite{MoritaDef}, \cite{KaledinLowen} and \cite{LowenCurvature} 
which have inspired the present work.

\section{Preliminaries}
\subsection{Conventions}
All rings and algebras are associative and unital. Graded objects are assumed to be $\mathbb{Z}$-graded, and we employ cohomological grading. We'll denote with $k$ a fixed base field, and with $\ke$ the $k$-algebra $k[t]/(t^2)$. Unless otherwise specified, deformation will mean infinitesimal first order deformation, i.e. deformation over $\ke$. In keeping with the conventions from \cite{MoritacDef}, we will use the notation $A\Mod$ to denote the category of \emph{right} $A$-modules. 

To enhance our triangulated categories, we will use the model of dg categories; in fact, we will often not distinguish between a triangulated category and its enhancement. All dg categories are over the base field $k$. As a model for the category of functors between triangulated categories we'll usually employ the category of quasi-functors (with the exception of section \ref{leftadj} where we make use of $A_{\infty}$ functors). When talking about filtered derived categories, we'll denote with $D^\varepsilon(-)$ the 1-derived category that in \cite{Nder} is denoted with $D^1(-)$.  If $\T$ is a (pretriangulated) dg category, the complexes of morphisms in the category $\T$ will be denoted $\Hm{\T}(X,Y)$, and with $H^0(\T)$ we'll denote its homotopy category with its canonical triangulated structure. An exception to this convention will be given by derived categories of bimodules and quasi-functors: in that case, by $D(\op{\A}\otimes \B)$ we'll mean the \emph{homotopy category} of the relevant enhancement, and with $\Hm{D(\op\A\otimes \B)}(M, N)$ the $k$-module of morphisms in the derived category. To highlight this fact, we will at times also use the notation $\Ext^i_{\Fun(\A,\B)}(M,N)$ to signify $\Hm{D(\op\A\otimes \B)}(M, N[i])$. In the same vein, isomorphisms between quasi-functors will always mean isomorphisms in the homotopy category. 

\subsection{Operations on quasi-functors}
We refer the reader to \cite{kellerICM} for a general introduction to dg-categories and related constructions. Recall that if $\A, \B$ are dg categories, a \emph{quasi-functor} is an object $X\in D(\op{\A}\otimes\B)$ which is right quasi-representable, i.e. such that for every $A\in \A$, the element $X(-,A)$ is quasi-isomorphic to a representable $\B$-module; a quasi-functor is said to be a quasi-equivalence if it admits an inverse; this is equivalent to inducing an equivalence between the respective homotopy categories. 
\subsubsection{General operations}
Despite quasi-functors not being literal functors, we'll still use classical category-theoretical notations for various constructions. For simplicity, we will write $\A\tow{F}\B$ to denote a quasi-functor $F\in D(\op{\A}\otimes \B)$. Let $\A, \B, \C$ be dg categories. Given two quasi-functors $\A\tow{G}\B$ and $\B\tow{F}\C$, their composition $G\otimes^{\operatorname{L}}_\B F$ will be denoted with $FG$. Given $F,G\colon \B \to \C$, $ M, N \colon \A \to \B$, $f\in \Hm{D(\op{\A}\otimes \B)}(M, N)$ and $\eta\in \Hm{D(\op{\B}\otimes\, \C)}(F, G)$, we will denote with $\eta M$ the morphism $\id_M \otimes^{\operatorname{L}}\eta \in \Hm{D(\op{\A}\otimes\, \C)}(FM, GN)$ and with $F f$ the morphism $f\otimes^{\operatorname{L}} \id_F \in \Hm{D(\op{\A}\otimes \,\C)}(FM, GN)$. This is compatible with the classical use, as the following lemma shows.
\begin{Lem}
    The diagram 
\[\begin{tikzcd}
	FM & GM \\
	FN & GN
	\arrow["{\eta M}", from=1-1, to=1-2]
	\arrow["{F f }"', from=1-1, to=2-1]
	\arrow["{G f }", from=1-2, to=2-2]
	\arrow["{\eta N}", from=2-1, to=2-2]
\end{tikzcd}\]
commutes.
\end{Lem}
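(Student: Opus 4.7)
The plan is to unwind the notation and reduce the commutativity of the square to the interchange/bifunctoriality law of the derived tensor product. Recall that, following the convention stated just before the lemma, the composition $FM$ is by definition $M \otimes^{L}_{\B} F$, while $\eta M = \id_M \otimes^{L} \eta$ and $Ff = f \otimes^{L} \id_F$. Thus the two legs of the square become
\[
\eta N \circ Ff \;=\; (\id_N \otimes^{L} \eta)\circ (f \otimes^{L} \id_F)
\]
and
\[
Gf \circ \eta M \;=\; (f \otimes^{L} \id_G)\circ (\id_M \otimes^{L} \eta),
\]
both living in $\Hm{D(\op{\A}\otimes \C)}(FM, GN)$.

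The key step is then to invoke the fact that $-\otimes^{L}_\B -$ is a bifunctor $D(\op\A \otimes \B)\times D(\op\B \otimes \C) \to D(\op\A \otimes \C)$. In any bifunctor, one has the interchange identity
\[
(g \otimes^{L} \id)\circ (\id \otimes^{L} h) \;=\; g \otimes^{L} h \;=\; (\id \otimes^{L} h)\circ (g \otimes^{L} \id),
\]
so both composites in the square coincide with $f \otimes^{L} \eta$, proving the lemma.

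The only point that needs care is the justification of bifunctoriality at the derived level. Concretely I would pick h-projective resolutions of $M, N$ in $D(\op\A\otimes \B)$ (or of $F, G$ on the other side) and compute $\otimes^{L}_\B$ as an ordinary tensor product of dg bimodules; at the underived level the interchange law $(g \otimes \id)(\id \otimes h) = g \otimes h = (\id \otimes h)(g \otimes \id)$ is literal, and it descends to the derived category because it is preserved by passage to homotopy classes of chain maps between h-projective resolutions. This is the main (and only) obstacle; once this bifunctoriality is in hand, the lemma is immediate and no further calculation is required.
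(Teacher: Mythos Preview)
Your proof is correct and is essentially the same as the paper's: the paper simply remarks that, writing down the definitions, both compositions equal $f\otimes^{\operatorname{L}}\eta$. Your extra paragraph on bifunctoriality at the derived level is a welcome elaboration of this one-line argument, but the underlying idea is identical.
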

\begin{proof}
Writing down the definitions,
both compositions are seen to equal $f\otimes^{\operatorname{L}} \eta$.
\end{proof}

\subsubsection{Adjunctions}
We'll use extensively the notion of adjunction of quasi-functors, see \cite{genovese2015adjunctions}. Given two quasi-functors $\A\tow{F}\B$ and $\B \tow{G}\A$, we will say \cite[Definition 6.2]{genovese2015adjunctions} that $F$ is left adjoint to $G$ if there exist two morphisms $\id_\A\to FG$ in $D(\op{\A}\otimes \A)$ and $GF\to \id_\B$ in $D(\op{\B}\otimes \B)$ satisfying the usual unit-counit equation in the appropriate derived category. We will frequently need the following statement, which is straightforward to prove:
\begin{Prop}
    Let $\A, \B, \C$ be dg categories and $\A \tow{F}\B$ a quasi-functor with right adjoint $G$. Let $\C\tow{M}\A$ and $\C \tow{N}\B$. There is a natural isomorphism \[
    \Hm{D(\op{\C}\otimes \B)}(FM, N)\cong \Hm{D(\op{\C}\otimes \A)}(M, GN).
    \] 
\end{Prop}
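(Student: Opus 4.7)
The plan is to carry out the standard unit--counit argument, translated into the language of quasi-functors. Let $\eta \colon \id_\A \to GF$ and $\epsilon \colon FG \to \id_\B$ denote the unit and counit of the adjunction (the arrows here and below are morphisms in the appropriate homotopy categories of bimodules, as per \cite{genovese2015adjunctions}). First I would define the two candidate maps. Given $f \in \Hm{D(\op{\C}\otimes \B)}(FM, N)$, set
\[
\Phi(f) := (Gf) \circ (\eta M) \in \Hm{D(\op{\C}\otimes \A)}(M, GN),
\]
viewed as the composite $M \to GFM \to GN$; conversely, for $g \in \Hm{D(\op{\C}\otimes \A)}(M, GN)$, set
\[
\Psi(g) := (\epsilon N) \circ (Fg) \in \Hm{D(\op{\C}\otimes \B)}(FM, N).
\]

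Next I would verify that $\Psi$ and $\Phi$ are mutually inverse. Unfolding gives
\[
\Psi(\Phi(f)) = (\epsilon N) \circ F((Gf) \circ (\eta M)) = (\epsilon N) \circ (FGf) \circ (F\eta M).
\]
Applying the naturality lemma stated immediately above the proposition to $f\colon FM \to N$ and the 2-morphism $\epsilon \colon FG \to \id_\B$ yields $(\epsilon N) \circ (FGf) = f \circ (\epsilon FM)$, so after regrouping
\[
\Psi(\Phi(f)) = f \circ (\epsilon F \circ F\eta) M.
\]
The triangle identity $\epsilon F \circ F\eta = \id_F$, which is part of the definition of adjunction, then gives $\Psi(\Phi(f)) = f$. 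The verification $\Phi(\Psi(g)) = g$ is entirely symmetric, invoking the other triangle identity $G\epsilon \circ \eta G = \id_G$. Naturality in $M$ and $N$ is automatic, since both $\Phi$ and $\Psi$ are built from fixed tensor/whiskering operations with $\eta$ and $\epsilon$, which are bifunctorial in their arguments.

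I do not expect any genuine obstacle: the argument is formally the same as in ordinary category theory, the only novelty being bookkeeping. The one point requiring minimal care is that compositions denoted by juxtaposition or $\circ$ refer to several distinct operations (composition in a derived category of bimodules, and derived tensor products implicit in whiskerings like $Gf$, $\eta M$), but the naturality lemma stated just above the proposition packages exactly the compatibility between these that is needed to make the classical proof go through verbatim.
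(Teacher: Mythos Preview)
Your argument is correct and is exactly the standard unit--counit proof the paper has in mind when calling the statement ``straightforward to prove'' without giving details. The only thing you rely on beyond the triangle identities is the interchange/naturality square stated just before the proposition, and you invoke it in the right place.
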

Note that it follows from  \cite{dyckerhoffsphericalrelative} that this notion of adjunction is compatible with the one for the associated $\infty$-functors between stable $\infty$-categories.
\subsection{Recollements of quasi-functors}
Let $\A$, $\B$ be pretriangulated dg categories. A categorical extension of $\A$ by $\B$, or recollement, is a dg category $\C$ equipped with two quasi-functors \[
\A\tow{i}\C \tow{\I}\B
\] satisfying the following conditions:

\begin{enumerate}
    \item The composition $\I\circ i$ vanishes;
    \item The quasi-functor $i$ admits a left adjoint $Q$ and a right adjoint $K$; the quasi-functor $\I$ admits a left adjoint $G$. The adjunctions are induced by units \[
    \id_\C \tow{q} iQ \text{, } \id_\C \overunderset{g}{}{\to} \I G \text{, } \id_\A \tow{} Ki
    \] and counits \[
    Qi\tow{}\id_\A \text{, } iK\tow{k}\id_\C \text{, } G\I \tow{\xi} \id_\C.
    \]
    \item The transformations \[
    \id_\A\to Ki \text{, } Qi\to \id_\A \text{ and }\id_\C \tow{g} \I G
    \] are isomorphisms in the appropriate derived categories; this corresponds to the statement that $i$ and $G$ are homotopically fully faithful.
    \item The square 
\[\begin{tikzcd}
	G\I & {\id_\C} \\
	0 & iQ
	\arrow["\xi", from=1-1, to=1-2]
	\arrow[from=1-1, to=2-1]
	\arrow["q", from=1-2, to=2-2]
	\arrow[from=2-1, to=2-2]
\end{tikzcd}\] is homotopy cartesian in $D(\op\C\otimes \C)$, i.e. there exists a natural transformation $iQ\tow{\partial}G\I[1]$ fitting into a triangle \[
G\I \tow{\xi} \id_\C \tow{q} iQ \tow{\partial}G\I[1];
\] this corresponds to the classical fact that any object of $\C$ can be obtained as an extension of an object of $\A$ by an object of $\B$.
\end{enumerate}
If these conditions are satisfied, then $I$ also admits a right adjoint, which we will leave unlabeled. Pictorially, we draw  \[\begin{tikzcd}
	\A & {\C} & \B.
	\arrow["i", hook, from=1-1, to=1-2]
	\arrow["\I", from=1-2, to=1-3]
	\arrow["K", curve={height=-22pt}, from=1-2, to=1-1]
	\arrow["Q"', curve={height=22pt}, from=1-2, to=1-1]
	\arrow["G"', curve={height=22pt}, from=1-3, to=1-2]
	\arrow[ curve={height=-22pt}, from=1-3, to=1-2]
\end{tikzcd}\]

\begin{Rem}
    It is easy to show that a recollement of quasi-functors gives rise to a semiorthogonal decomposition \[
    H^0\C=\langle H^0i\A, H^0G\B \rangle
    \] of the homotopy category; conditions $(1)$ and $(2)$ imply that the two subcategories are indeed semiorthogonal, while condition $(4)$ implies that any object of $H^0\C$ can be obtained as the cone of a morphism from an object of $i\A$ to one of $G\B$.
\end{Rem}
\begin{Prop}
    There exists a natural transformation $K\tow{\alpha}Q$ fitting into a triangle \[
KG\I \tow{\gamma} K \tow{\alpha} Q \tow{\beta} KG\I[1].
\]
\end{Prop}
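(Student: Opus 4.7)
The strategy is to apply the quasi-functor $K$ on the ``outside'' to the distinguished triangle produced by axiom $(4)$ of the recollement, and then to use the isomorphism $\id_\A \xrightarrow{\sim} Ki$ from axiom $(3)$ to identify the resulting middle term with $Q$.

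Explicitly, axiom $(4)$ supplies a distinguished triangle
\[
G\I \tow{\xi} \id_\C \tow{q} iQ \tow{\partial} G\I[1]
\]
in $D(\op{\C}\otimes \C)$. Composition on the outside with the quasi-functor $K\colon \C \to \A$ is realized as the derived tensor product $(-)\otimes^{\operatorname{L}}_\C K \colon D(\op{\C}\otimes \C) \to D(\op{\C}\otimes \A)$; since this is a triangulated functor, whiskering the three morphisms above by $K$ yields a distinguished triangle
\[
KG\I \tow{K\xi} K \tow{Kq} KiQ \tow{K\partial} KG\I[1]
\]
in $D(\op{\C}\otimes \A)$.

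By axiom $(3)$, the unit $\eta\colon \id_\A \to Ki$ is an isomorphism, so whiskering on the right by $Q$ produces an isomorphism $\eta Q\colon Q \xrightarrow{\sim} KiQ$ in $D(\op{\C}\otimes \A)$. Setting
\[
\gamma := K\xi, \qquad \alpha := (\eta Q)^{-1}\circ Kq, \qquad \beta := K\partial \circ \eta Q,
\]
the resulting candidate triangle $KG\I \tow{\gamma} K \tow{\alpha} Q \tow{\beta} KG\I[1]$ is then visibly isomorphic to the distinguished triangle above via the morphism of triangles given by the identities on $KG\I$ and $K$ and by $\eta Q$ on the third term; hence it is itself distinguished.

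The only non-formal point is the assertion that composition with a quasi-functor preserves distinguished triangles. This is a standard fact: quasi-functors are represented by (cofibrant) bimodules, composition corresponds to derived tensor product, and the latter is exact in each variable, so the induced operation on the homotopy category of quasi-functors is triangulated. I expect this to be the main — though entirely routine — step to invoke.
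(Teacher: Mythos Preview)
Your proposal is correct and follows exactly the paper's own argument: apply $K$ to the triangle $G\I \tow{\xi} \id_\C \tow{q} iQ \tow{\partial} G\I[1]$ and use the isomorphism $Ki\cong \id_\A$ to identify $KiQ$ with $Q$. The only difference is that you spell out the definitions of $\gamma$, $\alpha$, $\beta$ and the (routine) fact that composition with a quasi-functor is triangulated, whereas the paper leaves these implicit.
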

\begin{proof}
 
Apply the functor $K$ to the triangle 
\[
G\I \tow{\xi} \id_\C \tow{q} iQ \tow{\partial}G\I[1];
\]
and use that $KiQ\cong K$  .
\end{proof}
We will crucially use the natural transformation $\alpha$ in the definition of categorical deformation.
\subsection{Gluing dg categories}\label{gluing}We now recall some constructions which allow, given two dg categories and a quasi-functor between them, to construct an explicit recollement; for more details, see  \cite[Section 4.1]{Kuznetsov_2014}, or \cite{christ2024laxadditivity} for a treatment in terms of $\infty$-categories. Let $\A, \B$ be pretriangulated dg categories and $\phi$ a $\B$-$\A$ right quasi-representable bimodule. The category $\A\times_\phi \B$ has as objects the triples $(M_1, M_2, n)$ with $M_1\in \A$, $M_2\in \B$ and $n \in \phi(M_2, M_1)$ a closed degree $0$ element. The Hom-complexes are given by \[
\Hm{\A\times_\phi \B}((M_1, M_2, n), (N_1, N_2, n'))=\Hm{\A}(M_1, N_1)\oplus \Hm{\B
}(M_2, N_2) \oplus \phi[-1]
\] with differential \[
d(f_1, f_2, f_{21})=(df_1, df_2, -df_{21}+n'f_1-f_2n).
\] The category $\A\times_\phi \B$ is pretriangulated and comes equipped with several functors:
     \begin{itemize}
         \item Two embeddings \[\begin{split}
             i\colon \A &\to \A\times_\phi \B\\
             M &\to (M, 0,0)
         \end{split} 
         \] and \[\begin{split}
             G\colon \B &\to \A\times_\phi \B\\
             M &\to (0, M,0)
         \end{split} \]
         \item A left adjoint \[\begin{split}
             Q\colon  \A\times_\phi \B&\to \A\\
              (M_1, M_2, n) &\to M_1
         \end{split} 
         \] to $i$ and a right adjoint \[\begin{split}
             \I \colon\A\times_\phi \B &\to\B  \\
             (M_1, M_2, 0) &\to M_2
         \end{split} \] to $G$;
         \item A right adjoint to $i$ \[\begin{split}
         K\colon  \A\times_\phi \B&\to \A\Mod\\
         (M_1, M_2, n) &\to \operatorname{Cone}(n)[-1].
         \end{split}\] where we have used the Yoneda Lemma to see $n$ as a map $h_{M_1}\to \phi(M_2, -)$. 
     \end{itemize}
The functors $i, G, Q, \I$ are honest dg functors while in principle $K$ is only a bimodule; one can however show that it is always a quasi-functor.
     These functors satisfy the identities \begin{equation}\label{idgluing}
   Q i=Ki= \id_{\A}\text{, } \I G= \id_{\B}\text{, } QG=0 \text{, } KG=\phi[-1]\text{, } \I i=0.
         \end{equation}Note that the first two identities are induced by the (co)units of the respective adjunctions, expressing the fact the the (co)unit isomorphism is the identity. 
         
         \begin{Prop}
             The functors above fit into a recollement 
\[\begin{tikzcd}
	\A & {\A\times_\phi \B} & \B.
	\arrow["{i}", from=1-1, to=1-2]
	\arrow["{K}"', curve={height=28pt}, from=1-2, to=1-1]
	\arrow["{Q}", curve={height=-28pt}, from=1-2, to=1-1]
	\arrow["{\I}", from=1-2, to=1-3]
	\arrow["{G}"', curve={height=28pt}, from=1-3, to=1-2]
	\arrow[curve={height=-28pt}, from=1-3, to=1-2]
\end{tikzcd}\]
         \end{Prop}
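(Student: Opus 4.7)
The plan is to verify each of the four conditions in the definition of a recollement directly from the explicit formulas for $\A\times_\phi \B$ and its associated functors. Condition (1) is immediate: $\I i(M) = \I(M, 0, 0) = 0$, as recorded in \eqref{idgluing}. For condition (2), the adjunctions $Q \dashv i$ and $G \dashv \I$ follow by direct inspection of the Hom-complex formula, which forces the $\B$- or $\A$-component of a morphism into a triple with a vanishing coordinate to itself vanish. The adjunction $i \dashv K$ is more delicate since $K$ is a priori only a bimodule: the unit at $(M_1, M_2, n)$ should be built from $n$, viewed via Yoneda as a morphism $h_{M_1}\to \phi(M_2,-)$, landing in $i\operatorname{Cone}(n)[-1]$ in the derived category, and its adjunction property can be verified by a direct Hom computation from the formula for morphisms in $\A\times_\phi \B$.

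Condition (3) is then essentially automatic from \eqref{idgluing}: the equalities $Qi = Ki = \id_\A$ and $\I G = \id_\B$ hold on the nose, so the relevant (co)units realizing the fully-faithfulness of $i$ and $G$ are literally the identity.

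The main work lies in condition (4), the construction of the distinguished triangle
\[
G\I \tow{\xi} \id_\C \tow{q} iQ \tow{\partial} G\I[1].
\]
On an object $(M_1, M_2, n)$, I would realize the counit $\xi$ by the closed degree zero morphism $(0, \id_{M_2}, 0)\colon (0, M_2, 0)\to (M_1, M_2, n)$ and the unit $q$ by $(\id_{M_1}, 0, 0)\colon (M_1, M_2, n)\to (M_1, 0, 0)$. Both are easily checked to be closed of degree zero using the differential formula for $\A\times_\phi \B$, and their composition vanishes. To identify the cone of $\xi$ with $iQ(M_1, M_2, n) = (M_1, 0, 0)$, I would use that the cone in $\A\times_\phi \B$ of a morphism of triples $(f_1, f_2, f_{21})$ is componentwise the cone of $f_1$ and of $f_2$, equipped with an induced datum in $\phi$; applied to $\xi$ this yields $(M_1, \operatorname{Cone}(\id_{M_2}), 0)$, which is canonically quasi-isomorphic to $(M_1, 0, 0)$. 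The connecting morphism $\partial$ is then tautologically induced by the element $n\in \phi(M_2, M_1)$.

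The principal obstacle is upgrading these pointwise formulas to actual morphisms of bimodules in $D(\op{\C}\otimes \C)$. Concretely, one must check that the constructions of $\xi$, $q$, and $\partial$ are natural in $(M_1, M_2, n)$ and that the corresponding bimodule maps are compatible with the differentials defining $\A\times_\phi \B$; this is straightforward but notation-heavy. Once done, the triangulated identity is immediate from the cone computation above. The remaining claim, that $\I$ also admits a right adjoint, follows from the general theory of recollements recalled in the previous subsection now that conditions (1)--(4) are in place.
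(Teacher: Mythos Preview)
Your proposal is correct and follows essentially the same approach as the paper: both verify conditions (1)--(3) directly from the identities \eqref{idgluing} and then establish condition (4) by writing down the obvious maps $\xi$, $q$, and $\partial$ pointwise and checking the triangle via the explicit cone formula in $\A\times_\phi\B$ (for which the paper cites \cite[Lemma 4.3]{Kuznetsov_2014}). The only cosmetic difference is that you compute the cone of $\xi$ and identify it with $iQ$, whereas the paper computes the cone of $\partial[-1]$ and identifies it with the identity functor---a harmless rotation of the same triangle.
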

\begin{proof}
As already observed, the composition $i\I$ is the zero functor and the functors $i$ and $G$ are fully faithful. Hence we are only left with showing the existence of the required triangle. For this, observe that \[G\I(M_1, M_2, n)=(0, M_2, 0) \text{, }iQ(M_1, M_2, n)=(M_1, 0, 0)\] and the natural transformations $G\I\tow{\xi} \id_\C$ and $\id_\C\tow{q} iQ$ are given by the obvious maps. There is a natural transformation $iQ\tow{\partial}G\I[1]$ given by \[
\partial_{(M_1, M_2, n)}\colon (M_1, 0, 0)\tow{(0,0, n)}(0, M_1[1], 0)
\] which we claim fits into a triangle \[
G\I \tow{\xi} \id_{\A\times_\phi \B} \tow{q} iQ \tow{\partial}G\I[1].
\] To see this, one uses the description of cones in $\A\times_\phi \B$ from \cite[Lemma 4.3]{Kuznetsov_2014}. Indeed, it is straightforward to check that the cone of $\partial[-1]$ is isomorphic the identity functor, and that the natural transformations $\xi$ and $q$ correspond to the canonical maps \[
G\I \to \operatorname{Cone}(\partial[-1])\to iQ.
\]
\end{proof}
Like in the general case, we also obtain a (canonical) triangle 
\[
K \tow{\alpha}Q \tow{\beta} KG\I[1] \tow{\gamma} K[1]
\] which, since $KG[1]=\phi$, is the same thing as a triangle \begin{equation}\label{alphagluing}
K \tow{\alpha} Q \tow{\beta} \phi \I \tow{\gamma} K[1].
\end{equation}

\subsection{Yoneda extensions in triangulated categories}
\label{Yoneda}
Let $\T$ be a triangulated category. In this section only, we will consider \emph{unenhanced} triangulated categories; this will be reflected in the fact that, given objects $A,B \in \T$, we will use the notation $\Ext^n(A,B)$ to denote the $k$-module of morphisms in $\T$ from $A$ to $B[n]$.

\begin{Def}
   An \emph{(Yoneda) $n$-extension} $\mathcal{E}$ from $A$ to $B$ consists of $n$ exact triangles 
   \begin{equation}
       C_i \rightarrow E_i \rightarrow C_{i+1} \rightarrow C_i[1]
   \end{equation}
   for $i = 0, \dots, n-1$ with $C_0 = B$ and $C_n = A$.
   The \emph{splicing sequence} of the given $n$-extension is the resulting sequence of morphisms
   \begin{equation}\label{splicing}
      \sigma(E) =( 0 \rightarrow B \rightarrow E_0 \rightarrow E_1 \rightarrow \dots \rightarrow E_{n-1} \rightarrow A \rightarrow 0).
   \end{equation}
   For another $n$-extension $\mathcal{E}' = (C_i' \rightarrow E_i' \rightarrow C_{i+1} \rightarrow)$, a \emph{morphism of $n$-extensions} $\mathcal{E} \rightarrow \mathcal{E}'$ consists of maps $g_i: C_i \rightarrow C_i'$ and $f_i: E_i \rightarrow E_i'$ with $g_0 = 1_B$, $g_n = 1_A$ inducing morphisms of all the relevant triangles.
\end{Def}

An $n$-extension $E = (C_i \rightarrow E_i \rightarrow C_{i+1} \rightarrow)$ determines $n$ connecting morphisms
\begin{equation}
    \varphi_i: C_{i+1} \rightarrow C_i[1].
\end{equation}
The \emph{Ext-class} of $\mathcal{E}$ is by definition the composition
\begin{equation}\label{simp}
    \varphi(\mathcal{E}) = \varphi_0[n-1]\varphi_1[n-2]\dots \varphi_{n-1}: A \rightarrow B[n].
\end{equation}
Let $\Ext^n_Y(A,B)$ denote the set of $n$-extensions from $A$ to $B$. Consider the map
\begin{equation}
    \varphi: \Ext^n_Y(A,B) \rightarrow \Ext^n(A,B): \mathcal{E} \mapsto \varphi(\mathcal{E}).
\end{equation}
The map $\varphi$ is readily seen to be surjective, as in fact any factorization of a morphism $A \rightarrow B[n]$ into an $n$-simplex as in \eqref{simp} allows for the construction of a corresponding pre-image.
\begin{Prop}\label{morphext}
    Consider $n$-extensions $\mathcal{E}$, $\mathcal{E}'$ from $A$ to $B$. If there exists a morphism $\mathcal{E} \rightarrow \mathcal{E}'$, then we have $\varphi(\mathcal{E}) = \varphi(\mathcal{E}')$.
\end{Prop}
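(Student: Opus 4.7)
The plan is a direct diagram chase: the morphism of $n$-extensions yields, for each $i$, a morphism of triangles
\[
C_i \to E_i \to C_{i+1} \to C_i[1]
\]
to the corresponding primed triangle, whose rightmost commuting square gives
\[
\varphi'_i \circ g_{i+1} = g_i[1] \circ \varphi_i \qquad (i=0,\dots,n-1).
\]
This is the only input from the hypothesis, and everything else is formal.

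Next, I would shift these identities: for each integer $j$ we get $\varphi'_i[j] \circ g_{i+1}[j] = g_i[j+1] \circ \varphi_i[j]$. With this in hand, the argument is a telescoping slide. Starting from the right end of
\[
\varphi(\mathcal{E}') \;=\; \varphi'_0[n-1]\,\varphi'_1[n-2]\cdots \varphi'_{n-2}[1]\,\varphi'_{n-1},
\]
insert $g_n = 1_A$ after $\varphi'_{n-1}$; apply the shifted identity with $i=n-1$, $j=0$ to replace $\varphi'_{n-1}\circ g_n$ by $g_{n-1}[1]\circ \varphi_{n-1}$. Then apply the identity with $i=n-2$, $j=1$ to rewrite $\varphi'_{n-2}[1]\circ g_{n-1}[1]$ as $g_{n-2}[2]\circ \varphi_{n-2}[1]$. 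Iterating $n$ times, each $\varphi'_i[n-1-i]$ is converted into $\varphi_i[n-1-i]$, with the $g$-factor migrating to the left. The result is
\[
\varphi(\mathcal{E}') \;=\; g_0[n] \circ \varphi_0[n-1]\,\varphi_1[n-2]\cdots \varphi_{n-1} \;=\; g_0[n] \circ \varphi(\mathcal{E}).
\]
Since $g_0 = 1_B$, this gives $\varphi(\mathcal{E}')=\varphi(\mathcal{E})$.

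There is no real obstacle; the only thing to be careful about is the shift bookkeeping, i.e.\ that the commuting squares are used at the correct suspension so that the composable arrows actually fit together. A simple induction on $n$ (or equivalently on the number of slide steps performed) makes this rigorous and avoids writing out the long telescoped expression. Notice that the boundary conditions $g_0 = 1_B$ and $g_n = 1_A$ are both essential: $g_n = 1_A$ is what allows the slide to start, and $g_0 = 1_B$ is what makes the leftover factor $g_0[n]$ disappear at the end.
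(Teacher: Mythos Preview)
Your proof is correct and is essentially the same approach as the paper's: the paper simply states that the result follows from the definition of $\varphi$ together with the condition that a morphism of extensions fixes $A$ and $B$, and your argument spells out precisely this diagram chase in detail.
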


\begin{proof}
    This easily follows from the definition of $\varphi$ and the requirement that a morphism fixes both $A$ and $B$.
\end{proof}

\begin{Rem}
    Let $\Sigma^n(A,B)$ denote the set of sequences of shape \eqref{splicing} in which the composition of every two consecutive maps is zero. Unlike in the familiar case of extensions in an abelian category, the map $\sigma: \Ext^n_Y(A,B) \rightarrow \Sigma^n(A,B)$ is in general neither surjective nor injective, due to the fact that cones are merely weak cokernels and the map towards the cone fails to be an epimorphism in general.
\end{Rem}

\section{Categorical deformations}\label{catdefinitions}
In this section we introduce the notion of categorical deformation of a triangulated category (Definition \ref{defdef}) and show how to any deformation corresponds an Hochschild class.
\subsection{Definition and basic properties}\label{defsec}
\begin{Def}\label{defdef}
    Let $\T$ be a pretriangulated dg category. A \emph{first order categorical deformation} of $\T$ consists of the following data:
    \begin{enumerate}
        \item A recollement $\T_{\varepsilon}$ as in \[\begin{tikzcd}
	\T & {\T_\varepsilon} & \T
	\arrow["i", hook, from=1-1, to=1-2]
	\arrow["\I", from=1-2, to=1-3]
	\arrow["K", curve={height=-22pt}, from=1-2, to=1-1]
	\arrow["Q"', curve={height=22pt}, from=1-2, to=1-1]
	\arrow["G"', curve={height=22pt}, from=1-3, to=1-2]
	\arrow[curve={height=-22pt}, from=1-3, to=1-2]
\end{tikzcd}\] with associated $\alpha\colon K \rightarrow Q$;
        \item A Yoneda $2$-extension from $I$ to $I$ compatible with the semiorthogonal decomposition, i.e. whose splicing sequence is of the form
        \begin{equation}
            0 \rightarrow \I \tow{\delta_1} K \tow{\alpha} Q \tow{\delta_2} \I \to 0
        \end{equation}
        for some $\delta_1, \delta_2$.
    \end{enumerate}
\end{Def}
In other words, the deformation is given by a recollement together with two triangles  \[
\begin{split}
    \I \tow{\delta_1} K \tow{\zeta_1} C \tow{\eta_1} \I[1]\text{ and } C \tow{\eta_2} Q \tow{\delta_2} \I \tow{\zeta_2} C[1]
\end{split}
\] with the property that the composition \[
K \tow{\zeta_1} C \tow{\eta_2}Q
\] coincides with $\alpha$ as a map in the derived category. 
We will routinely abuse notation and denote with $\T_\varepsilon$ the pair of recollement and extension. 

From this data, we easily get a Hochschild class: we have a map $\iota\colon \I[-1] \to \I[1]$ defined as the composition of the two boundary maps $
\I[-1]\tow{\zeta_2[-1]}C\tow{\eta_1}\I[1]$; 
and we can compose $\iota$ on the right with $G$ to obtain a natural transformation $\I G[-1]\tow{\iota G} \I G[1]$; recalling now the natural isomorphism $\id_\T\tow{g} \I G$, we obtain the class \[
\mu_\T(\T_\varepsilon)\in \HH^2(\T)=\Ext^2_{\Fun(\T,\T)}(\id_\T, \id_\T)
\]
as the unique morphism $\id_\T[-1]\to \id_\T[1]$ in $D(\op{\T}\otimes \T)$ fitting in the commutative diagram
\[\begin{tikzcd}
	{\I G[-1]} & {\I G[1]} \\
	{\id_{\T}[-1]} & {\id_{\T}[1].}
	\arrow["{\iota G}", from=1-1, to=1-2]
	\arrow["g", from=2-1, to=1-1]
	\arrow["{\mu_\T(\T_\varepsilon)}", from=2-1, to=2-2]
	\arrow["g"', from=2-2, to=1-2]
\end{tikzcd}\] In the language of section \ref{Yoneda}, the class $\mu_\T(\T_\varepsilon)$ is obtained by taking the Ext-class associated to the extension with splicing sequence
\[
\id_\T \tow{\delta_1Gg}KG \tow{\alpha G} QG \tow{g^{-1}\delta_2 G} \id_\T.
\]

\begin{Lem}\label{triaglue}
If $\T_\varepsilon$ is a categorical deformation of a triangulated category $\T$, then there is a triangle 
\begin{equation}\label{octa1}
\I[-1]\tow{\iota} \I[1] \tow{\sigma} KG\I[1] \tow{\pi} \I \end{equation}in $D(\op{\T_\varepsilon}\otimes \T)$. Moreover, consider the diagram
\[\begin{tikzcd}
	& {KG\I[1]} \\
	{\I[1]} && \I \\
	K && Q \\
	& C
	\arrow["\pi", from=1-2, to=2-3]
	\arrow[dashed, from=1-2, to=3-1]
	\arrow["\sigma", from=2-1, to=1-2]
	\arrow["{\delta_1}"', dashed, from=2-1, to=3-1]
	\arrow["\iota"', dashed, from=2-3, to=2-1]
	\arrow["{\zeta_2}"'{pos=0.4}, dashed, from=2-3, to=4-2]
	\arrow["\alpha", from=3-1, to=3-3]
	\arrow["{\zeta_1}"', from=3-1, to=4-2]
	\arrow[from=3-3, to=1-2]
	\arrow["{\delta_2}"', from=3-3, to=2-3]
	\arrow["{\eta_1}"'{pos=0.6}, from=4-2, to=2-1]
	\arrow["{\eta_2}"', from=4-2, to=3-3]
\end{tikzcd}\] where the dotted arrows are of degree $1$. Then $\sigma$ and $\pi$ form commutative triangles with the faces, other than \eqref{octa1}, that contain them.
\end{Lem}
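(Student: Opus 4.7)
The plan is to deduce both the distinguished triangle \eqref{octa1} and the commutativity statements from a single application of the octahedral axiom (TR4) in $D(\op{\T_\varepsilon}\otimes \T)$ to the factorization $\alpha = \eta_2\circ\zeta_1$ built into the definition of a categorical deformation. The three input triangles are immediately at hand: the rotated first splice
\[
K\tow{\zeta_1}C\tow{\eta_1}\I[1]\tow{-\delta_1[1]}K[1],
\]
the second splice
\[
C\tow{\eta_2}Q\tow{\delta_2}\I\tow{\zeta_2}C[1],
\]
and the recollement triangle from the proposition just above Section \ref{gluing},
\[
K\tow{\alpha}Q\tow{\beta}KG\I[1]\tow{\gamma[1]}K[1].
\]

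Applying TR4 to the composition $\alpha = \eta_2\zeta_1$ produces a fourth distinguished triangle
\[
\I[1]\tow{\sigma}KG\I[1]\tow{\pi}\I\tow{\partial}\I[2]
\]
together with the commutativities $\sigma\eta_1 = \beta\eta_2$, $\pi\beta = \delta_2$ and $\zeta_2\pi = \zeta_1[1]\gamma[1]$, which are exactly the faces of the displayed octahedron involving $\sigma$ or $\pi$ other than \eqref{octa1} itself. It then remains to identify $\partial$ with $\iota[1]$: by the standard construction of the connecting morphism in TR4, $\partial$ factors as $\I\tow{\zeta_2}C[1]\tow{\eta_1[1]}\I[2]$, which is $\iota[1]$ by the definition of $\iota$ given just before the lemma. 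Rotating the fourth triangle one step backward therefore yields precisely \eqref{octa1}.

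The main (minor) point of care I anticipate is this last identification of $\partial$ with $\iota[1]$, since the octahedral connecting map is pinned down only up to the chosen sign and shift conventions and these must be aligned with the conventions used to define $\iota$. Beyond that sanity check, the argument is a direct and single invocation of TR4 applied to data which is essentially tautologically present in the definition of a categorical deformation.
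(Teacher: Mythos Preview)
Your proposal is correct and is exactly the approach the paper takes: its entire proof reads ``This is the octahedral axiom for triangulated categories.'' You have simply spelled out the application of TR4 to the factorization $\alpha=\eta_2\zeta_1$ in detail, which is precisely what is intended; your caveat about aligning the sign/shift conventions in identifying $\partial$ with $\iota[1]$ is the only genuine thing to verify, and it goes through as you describe.
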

\begin{proof}
    This is the octahedral axiom for triangulated categories.
\end{proof}
Composing with $G$ and using $\I G\cong \id_\T$, we obtain:
\begin{Cor}\label{triacor}
There is a triangle
\begin{equation}\label{triacone}
    \id_\T[-1] \tow{\mu_\T(\T_\varepsilon)} \id_\T[1]\to KG[1] \to \id_\T
\end{equation} in $D(\op{\T}\otimes \T)$.
\end{Cor}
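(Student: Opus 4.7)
The plan is to obtain the triangle by pulling back the triangle from Lemma \ref{triaglue} along the quasi-functor $G\colon \T \to \T_\varepsilon$, and then transporting it along the isomorphism $\I G \cong \id_\T$. The only thing one really needs to verify is that the resulting second arrow is precisely $\mu_\T(\T_\varepsilon)$, which will be immediate from the definition.

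More concretely, the first step is to note that precomposition $(-)\circ G\colon D(\op{\T_\varepsilon}\otimes \T) \to D(\op{\T}\otimes \T)$ is given by the derived tensor product $-\otimes^{\mathrm{L}}_{\T_\varepsilon} G$, and is therefore an exact functor of triangulated categories. Applying it to the triangle
\[
\I[-1]\tow{\iota} \I[1] \tow{\sigma} KG\I[1] \tow{\pi} \I
\]
from Lemma \ref{triaglue} yields a triangle
\[
\I G[-1] \tow{\iota G} \I G[1] \tow{\sigma G} KG\I G[1] \tow{\pi G} \I G
\]
in $D(\op{\T}\otimes \T)$.

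Next, I would use the isomorphism $g\colon \id_\T \tow{\cong} \I G$ provided by the recollement axioms to rewrite this triangle with $\id_\T$ in place of $\I G$. Concretely, the isomorphism $g$ (and its shifts) together with the induced isomorphism $KG[1] \cong KG\I G[1]$ (obtained by applying $KG[1]$ to $g$, equivalently applying $K$ to the isomorphism $G \cong G \I G$ coming from the adjunction $(G,\I)$ and fullness of $G$) identifies the above triangle with one of the form
\[
\id_\T[-1] \tow{\iota'} \id_\T[1] \to KG[1] \to \id_\T,
\]
where $\iota'$ is the composition $g^{-1}[1] \circ (\iota G) \circ g[-1]$.

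The final step is to recognise this $\iota'$ as $\mu_\T(\T_\varepsilon)$. But this is exactly the defining commutative diagram of $\mu_\T(\T_\varepsilon)$ from \S\ref{defsec}: the class $\mu_\T(\T_\varepsilon)$ is the unique map $\id_\T[-1] \to \id_\T[1]$ making the square with $\iota G$ and the vertical isomorphisms $g[-1]$, $g[1]$ commute. The main (and only) conceptual point is exactness of $-\otimes^{\mathrm{L}}_{\T_\varepsilon} G$; everything else is rewriting via the recollement isomorphism $g$.
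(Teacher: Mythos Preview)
Your proposal is correct and follows exactly the paper's approach: the paper's proof is the single line ``Composing with $G$ and using $\I G\cong \id_\T$, we obtain [the corollary],'' and your write-up simply spells out what this means, including the identification of the resulting map with $\mu_\T(\T_\varepsilon)$ via its defining square.
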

\subsection{Equivalences of deformations}
\label{equisec}
Let $\T_\varepsilon, \Tilde{\T}_\varepsilon$ be deformations of a triangulated category $\T$. Consider a quasi-functor \[
F\colon\T_\varepsilon \to \Tilde{\T}_\varepsilon.
\] We'll say that $F$ is compatible with the semiorthogonal decompositions if the diagram 
\[\begin{tikzcd}
	\T & {\T_\varepsilon} & \T \\
	\T & {\Tilde{\T_\varepsilon}} & \T
	\arrow["{\id_\T}"', from=1-1, to=2-1]
	\arrow["Q"', from=1-2, to=1-1]
	\arrow["\I", from=1-2, to=1-3]
	\arrow["F"', from=1-2, to=2-2]
	\arrow["{\id_\T}"', from=1-3, to=2-3]
	\arrow["{\Tilde{Q}}"', from=2-2, to=2-1]
	\arrow["{\Tilde{\I}}", from=2-2, to=2-3]
\end{tikzcd}\] is $2$-commutative, i.e. if there exist two natural isomorphisms \begin{equation}\label{sodmap}
\chi_{\I}\colon \I \to \Tilde{\I}F\text{ and }\chi_Q\colon Q\to \Tilde{Q}F.
\end{equation} \begin{Prop}
    The following are equivalent:
    \begin{itemize}
        \item The functor $F$ is compatible with the semiorthogonal decompositions;
        \item There exists a triangle
        \begin{equation}\label{trianglemap}
\Tilde{G}\I \to F \to \Tilde{i}Q \to \Tilde{G}\I [1].
    \end{equation}
in $D(\op{\T_\varepsilon}\otimes \Tilde{\T}_\varepsilon).$
    \end{itemize}
\end{Prop}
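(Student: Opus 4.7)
My plan is to prove the two implications separately, each by applying the defining triangle of a recollement and the standard identities that follow from the recollement axioms.

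For the implication $(\Rightarrow)$, assume $F$ is compatible with the semiorthogonal decompositions, with natural isomorphisms $\chi_\I\colon \I\to \Tilde{\I}F$ and $\chi_Q\colon Q\to \Tilde{Q}F$ as in \eqref{sodmap}. Precompose the defining triangle of the recollement $\Tilde{\T}_\varepsilon$ (axiom (4) applied to $\Tilde{\T}_\varepsilon$) with $F$ to obtain
\[
\Tilde{G}\Tilde{\I}F \to F \to \Tilde{i}\Tilde{Q}F \to \Tilde{G}\Tilde{\I}F[1].
\]
Using $\tilde{G}\chi_\I\colon \Tilde{G}\I \xrightarrow{\sim} \Tilde{G}\Tilde{\I}F$ and $\tilde{i}\chi_Q\colon \Tilde{i}Q\xrightarrow{\sim}\Tilde{i}\Tilde{Q}F$, the triangle rewrites as the desired
\[
\Tilde{G}\I \to F \to \Tilde{i}Q \to \Tilde{G}\I[1].
\]

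For the implication $(\Leftarrow)$, suppose we are given a triangle as in \eqref{trianglemap}. I apply $\Tilde{\I}$ and $\Tilde{Q}$ separately. Applying $\Tilde{\I}$ yields
\[
\Tilde{\I}\Tilde{G}\I \to \Tilde{\I}F \to \Tilde{\I}\Tilde{i}Q \to \Tilde{\I}\Tilde{G}\I[1],
\]
and using $\Tilde{\I}\Tilde{G}\cong \id_\T$ (recollement axiom (3)) together with $\Tilde{\I}\Tilde{i}=0$ (axiom (1)) this reduces to a triangle whose connecting map forces the first arrow to be an isomorphism; one takes this isomorphism as $\chi_\I\colon\I\to \Tilde{\I}F$. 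Similarly, applying $\Tilde{Q}$ produces
\[
\Tilde{Q}\Tilde{G}\I \to \Tilde{Q}F \to \Tilde{Q}\Tilde{i}Q \to \Tilde{Q}\Tilde{G}\I[1],
\]
and with $\Tilde{Q}\Tilde{i}\cong\id_\T$ and (as shown below) $\Tilde{Q}\Tilde{G}=0$, the second arrow becomes an isomorphism, whose inverse we take as $\chi_Q\colon Q\to \Tilde{Q}F$.

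The key subsidiary fact needed is the identity $\Tilde{Q}\Tilde{G}=0$. I would verify it by applying the recollement triangle $\Tilde{G}\Tilde{\I}\to \id_{\Tilde{\T}_\varepsilon}\to \Tilde{i}\Tilde{Q}\to\Tilde{G}\Tilde{\I}[1]$ to an object of the form $\Tilde{G}X$: since $\Tilde{\I}\Tilde{G}\cong\id_\T$, the first map $\Tilde{G}\Tilde{\I}\Tilde{G}X\to \Tilde{G}X$ is an isomorphism, so $\Tilde{i}\Tilde{Q}\Tilde{G}X=0$, and full faithfulness of $\Tilde{i}$ (axiom (3)) gives $\Tilde{Q}\Tilde{G}X=0$. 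This is really the only non-formal step; both implications otherwise amount to plugging functors into a triangle and using the recollement identities. I expect no substantial obstacle, the only care required being that all identifications take place in $D(\op{\T_\varepsilon}\otimes\Tilde{\T}_\varepsilon)$ rather than pointwise, which is automatic since the recollement axioms and isomorphisms $\chi_\I$, $\chi_Q$ are natural transformations between quasi-functors.
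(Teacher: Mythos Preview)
Your proof is correct and follows essentially the same approach as the paper: for $(\Rightarrow)$ you precompose the recollement triangle of $\Tilde{\T}_\varepsilon$ with $F$ and use $\chi_\I,\chi_Q$, and for $(\Leftarrow)$ you postcompose the given triangle with $\Tilde{\I}$ and $\Tilde{Q}$ and invoke the recollement identities, exactly as the paper does. You even supply the justification of $\Tilde{Q}\Tilde{G}=0$ that the paper leaves implicit; just note that your argument for it should be run at the level of quasi-functors (compose the recollement triangle on the right with $\Tilde{G}$ and use the triangle identity to see that $\tilde{\xi}\Tilde{G}$ is an isomorphism) rather than objectwise, as you yourself point out at the end.
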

\begin{proof}
    Assume that isomorphisms as in \eqref{sodmap} are given. Since $\Tilde{\T_\varepsilon}$ is a categorical extension, there exists a triangle \[
    \Tilde{G}\Tilde{\I}\to \id_{\Tilde{\T_\varepsilon}}\to \Tilde{i}\Tilde{Q} \to  \Tilde{G}\Tilde{\I}[1];
    \] composing on the right with $F$ yields a triangle
    \[
    \Tilde{G}\Tilde{\I}F\to F\to \Tilde{i}\Tilde{Q}F \to  \Tilde{G}\Tilde{\I}F[1];
    \]under the isomorphisms \eqref{sodmap}, this yields the triangle \eqref{trianglemap}. Vice versa, assume given a triangle as in \eqref{trianglemap}. Composing $\Tilde{G}\I \to F$ on the left with $\Tilde{\I}$ and using that $\Tilde{\I}\Tilde{G}\cong \id_{\T},$ we obtain a map $\I \to\Tilde{\I}F$ whose cone is given by $\Tilde{\I}\Tilde{i}Q$; since $\Tilde{\I}\Tilde{i}\cong 0$, this is an isomorphism. In the same way, one can compose on the left with $\Tilde{Q}$ to obtain a natural isomorphism $Q\to \Tilde{Q}F$.
    
\end{proof}

Using the existence of the triangle \eqref{trianglemap} and reasoning as in the proof above, one also proves the following:
\begin{Cor}
    If $F$ is compatible with the semiorthogonal decompositions, then there exist further natural isomorphisms \[
    \chi_G\colon \Tilde{G}\to FG \text{ and } \chi_i\colon \Tilde{i}\to Fi.
    \]
\end{Cor}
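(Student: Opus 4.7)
The plan is to mimic the proof of the preceding proposition, this time composing the triangle \eqref{trianglemap} on the right with the fully faithful embeddings $G$ and $i$ rather than with the quotient functors $\Tilde{\I}$ and $\Tilde{Q}$, and then invoking the standard recollement identities to collapse the resulting triangle to an isomorphism.

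For the isomorphism $\chi_G$, I would compose \eqref{trianglemap} on the right with $G$ to obtain the triangle
\[
\Tilde{G}\I G \to FG \to \Tilde{i}Q G \to \Tilde{G}\I G[1].
\]
The isomorphism $\I G\cong \id_\T$ from the recollement identifies the leftmost term with $\Tilde{G}$, so it suffices to show that $QG\cong 0$. This is a standard consequence of the recollement axioms: by the adjunction $Q\dashv i$ together with $\I i = 0$, for all $X,Y\in\T$ one has
\[
\Hm{D(\op{\T}\otimes\T)}(QGX, Y)\cong \Hm{D(\op{\T}\otimes \T_\varepsilon)}(GX, iY)\cong \Hm{D(\op{\T}\otimes \T)}(X, \I iY)=0,
\]
so that $QG\cong 0$ in $D(\op{\T}\otimes \T)$. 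The triangle then collapses to $\Tilde{G}\to FG\to 0\to \Tilde{G}[1]$, producing the desired natural isomorphism $\chi_G\colon \Tilde{G}\to FG$.

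For the isomorphism $\chi_i$, I would analogously compose \eqref{trianglemap} on the right with $i$, yielding
\[
\Tilde{G}\I i \to Fi \to \Tilde{i}Qi \to \Tilde{G}\I i[1].
\]
Using $\I i=0$ and $Qi\cong \id_\T$, this reduces to a triangle of the form $0\to Fi \to \Tilde{i}\to 0$, so that the middle map is an isomorphism $Fi\xrightarrow{\sim}\Tilde{i}$; inverting yields $\chi_i\colon \Tilde{i}\to Fi$.

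I do not anticipate any serious obstacle: the argument is a direct transposition of the technique used in the previous proposition. The only mildly nonformal ingredient is the vanishing $QG\cong 0$, but as shown above this follows immediately from the adjunction and the recollement identity $\I i=0$; everything else is bookkeeping with the recollement data.
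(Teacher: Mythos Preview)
Your argument is correct and is exactly what the paper intends by ``reasoning as in the proof above'': one composes the triangle \eqref{trianglemap} on the right with $G$ (resp.\ $i$) and uses the recollement identities $\I G\cong \id_\T$, $QG\cong 0$ (resp.\ $\I i=0$, $Qi\cong \id_\T$) to collapse it to the desired isomorphism. The only cosmetic point is that your vanishing argument for $QG$ is phrased objectwise rather than at the level of bimodules, but since $QG$ is right quasi-representable this suffices.
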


We'll need the following technical fact:
\begin{Lem}\label{diachasing}
    The diagram
\[\begin{tikzcd}
	& \I G \\
	{\id_\T} & {\Tilde{\I}FG} \\
	& {\Tilde{\I}\Tilde{G}}
	\arrow["{\chi_{\I} G}", from=1-2, to=2-2]
	\arrow["g", from=2-1, to=1-2]
	\arrow["{\Tilde{g}}"', from=2-1, to=3-2]
	\arrow["{\Tilde{\I}\chi_G }"', from=3-2, to=2-2]
\end{tikzcd}\]
commutes.
\end{Lem}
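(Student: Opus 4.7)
The plan is to trace the constructions of $\chi_\I$ and $\chi_G$ back to a common source, namely the first morphism $\phi\colon \Tilde{G}\I \to F$ in the triangle \eqref{trianglemap}, and then conclude by a single application of naturality of the unit $\Tilde{g}$.

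First I will pin down a concrete description of $\chi_G$. From the proof of the preceding Proposition, $\chi_\I$ is by construction the composite $\I \tow{\Tilde{g}\I} \Tilde{\I}\Tilde{G}\I \tow{\Tilde{\I}\phi} \Tilde{\I}F$. Symmetrically, composing \eqref{trianglemap} on the right with $G$ yields a triangle $\Tilde{G}\I G \tow{\phi G} FG \to \Tilde{i}QG \to \Tilde{G}\I G[1]$. In any recollement one has $QG \cong 0$: applying the defining triangle $G\I \to \id_{\T_\varepsilon} \to iQ \to G\I[1]$ to an object $GX$ and using that the counit $G\I \to \id$ is an isomorphism on the essential image of $G$ (a consequence of $g$ being an iso, via the triangle identities) forces $iQG\cong 0$, whence $QG\cong 0$ by fully faithfulness of $i$. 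Therefore $\phi G$ is an iso, and the canonical $\chi_G$ is the composite $\Tilde{G} \tow{\Tilde{G}g} \Tilde{G}\I G \tow{\phi G} FG$.

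Unpacking both legs of the diagram with these presentations yields
\[
(\chi_\I G)\circ g \;=\; \Tilde{\I}\phi G \circ \Tilde{g}\I G \circ g, \qquad (\Tilde{\I}\chi_G)\circ \Tilde{g} \;=\; \Tilde{\I}\phi G \circ \Tilde{\I}\Tilde{G}g \circ \Tilde{g}.
\]
It remains to equate the two tails, which is exactly the naturality square of $\Tilde{g}\colon \id_\T \to \Tilde{\I}\Tilde{G}$ evaluated on the morphism $g\colon \id_\T \to \I G$, giving $\Tilde{g}\I G \circ g = \Tilde{\I}\Tilde{G}g \circ \Tilde{g}$. Whiskering on the left with $\Tilde{\I}\phi G$ closes the diagram.

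The only real subtlety is pinning down that the $\chi_G$ appearing in the Corollary is indeed the canonical one obtained by the symmetric construction; once this is fixed, the argument reduces to a one-step naturality chase, so no further obstacle is expected.
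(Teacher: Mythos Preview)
Your proof is correct and is precisely the unpacking the paper has in mind when it writes ``Follows from writing down the various definitions.'' The key identifications $\chi_\I = \Tilde{\I}\phi \circ \Tilde{g}\I$ and $\chi_G = \phi G \circ \Tilde{G}g$ are exactly the ones arising from the proof of the preceding Proposition and its Corollary (in particular, since the triangle \eqref{trianglemap} is built from the given $\chi_\I$ via $\phi = \Tilde{\xi}F \circ \Tilde{G}\chi_\I$, the triangle identity for $(\Tilde{g},\Tilde{\xi})$ recovers $\chi_\I = \Tilde{\I}\phi \circ \Tilde{g}\I$, so your ``subtlety'' is not an issue). The conclusion then reduces, as you say, to the naturality square of $\Tilde{g}$ at the morphism $g$, which is the Lemma from the preliminaries.
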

\begin{proof}
    Follows from writing down the various definitions.
\end{proof}

Recall that the deformation $\T_\varepsilon$ comes equipped with a class $\iota\in \Ext^2_{\Fun(\T_\varepsilon,  \T)}(\I, \I)$; similarly, $\Tilde{\T_\varepsilon}$ comes with a class 
$\Tilde{\iota}\in \Ext^2_{\Fun(\Tilde{\T_\varepsilon}, \T)}(\Tilde{\I}, \Tilde{\I}).$ Composing on the right with $F$ determines a natural map \[
\Ext^2_{\Fun(\Tilde{\T_\varepsilon}, \T)}(\Tilde{\I}, \Tilde{\I})\to \Ext^2_{\Fun({\T_\varepsilon}, \T)}(\Tilde{\I}F, \Tilde{\I}F)
\] while the natural isomorphism $\chi_\I$ gives an isomorphism \[
\Ext^2_{\Fun({T_\varepsilon}, \T)}(\Tilde{\I}F, \Tilde{\I}F)\cong  \Ext^2_{\Fun({T_\varepsilon}, \T)}(I,I).
\] We'll say that a quasi-equivalence $F$ (compatible with the SODs) is an \emph{equivalence of deformations} if the class $\Tilde{\iota}$ corresponds to $\iota$ under these maps, i.e. if the diagram 
\begin{equation}\label{eqdefs}
\begin{tikzcd}
	{\I[-1]} & {\I[1]} \\
	{\Tilde{\I}F[-1]} & {\Tilde{\I}F[1]}
	\arrow["\iota", from=1-1, to=1-2]
	\arrow["{\chi_{\I}[-1]}"', from=1-1, to=2-1]
	\arrow["{\chi_{\I}[1]}", from=1-2, to=2-2]
	\arrow["{\Tilde{\iota}F}", from=2-1, to=2-2]
\end{tikzcd}    
\end{equation}
commutes.
\begin{Rem}
    It is not strictly necessary to assume $F$ to be a quasi-equivalence; however, it is always true that if a functor exists that is compatible with the SODs and that preserves the Ext-class, then there exists a quasi-equivalence with the same properties.
\end{Rem}
\begin{Prop}\label{equivalenceofclass}
If there exists an equivalence between two deformations $\T_\varepsilon$ and $\Tilde{\T_\varepsilon}$, then $\mu_\T(\T_\varepsilon)=\mu_\T(\Tilde{\T_\varepsilon})$
\end{Prop}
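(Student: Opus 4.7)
The plan is to chase the given hypothesis \eqref{eqdefs} through a sequence of compatibilities relating $\iota G$ to $\tilde{\iota}\tilde{G}$ via the natural isomorphisms $g, \tilde{g}, \chi_\I, \chi_G$. By construction, we have
\[
\mu_\T(\T_\varepsilon) = g^{-1}[1] \circ \iota G \circ g[-1] \quad \text{and} \quad \mu_\T(\tilde{\T}_\varepsilon) = \tilde{g}^{-1}[1] \circ \tilde{\iota}\tilde{G} \circ \tilde{g}[-1],
\]
so the task is to identify these two maps $\id_\T[-1] \to \id_\T[1]$.

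The first step is to compose the commuting square \eqref{eqdefs} on the right with the quasi-functor $G$, yielding the identity
\[
\chi_\I G[1] \circ \iota G \;=\; \tilde{\iota}FG \circ \chi_\I G[-1]
\]
in $D(\op{\T} \otimes \T)$. Next, since $F$ is compatible with the SODs, the corollary after Proposition on compatibility produces the natural isomorphism $\chi_G: \tilde{G} \to FG$. Applying the horizontal-composition Lemma from the preliminaries to $\tilde{\iota}$ and $\chi_G$ gives the compatibility
\[
\tilde{\iota}FG \circ (\tilde{\I}\chi_G)[-1] \;=\; (\tilde{\I}\chi_G)[1] \circ \tilde{\iota}\tilde{G},
\]
which lets us rewrite $\tilde{\iota}FG$ in terms of $\tilde{\iota}\tilde{G}$.

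Combining these two equations yields
\[
\chi_\I G[1] \circ \iota G \circ g[-1] \;=\; (\tilde{\I}\chi_G)[1] \circ \tilde{\iota}\tilde{G} \circ (\tilde{\I}\chi_G)^{-1}[-1] \circ \chi_\I G[-1] \circ g[-1].
\]
At this point Lemma \ref{diachasing} enters crucially: it asserts $\chi_\I G \circ g = \tilde{\I}\chi_G \circ \tilde{g}$, or equivalently $(\tilde{\I}\chi_G)^{-1} \circ \chi_\I G \circ g = \tilde{g}$ and $g^{-1} \circ (\chi_\I G)^{-1} \circ (\tilde{\I}\chi_G) = \tilde{g}^{-1}$. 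Substituting the first identity on the right and then precomposing the whole equation with $g^{-1}[1] \circ (\chi_\I G)^{-1}[1]$ on the left, the left-hand side collapses to $\mu_\T(\T_\varepsilon)$ while the second identity simplifies the right-hand side to $\tilde{g}^{-1}[1] \circ \tilde{\iota}\tilde{G} \circ \tilde{g}[-1] = \mu_\T(\tilde{\T}_\varepsilon)$, concluding the proof.

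Conceptually nothing deep is happening beyond applying naturality and the two explicit compatibilities; the main obstacle is purely bookkeeping, namely organising the many shifts and inverses so that the cancellations produced by Lemma \ref{diachasing} fall in the right places. A cleaner presentation would pack everything into a single large commutative diagram whose outer rectangle consists of the two maps $\mu_\T(\T_\varepsilon)$ and $\mu_\T(\tilde{\T}_\varepsilon)$, with inner squares labelled by \eqref{eqdefs}, horizontal composition, and Lemma \ref{diachasing} (plus its shift).
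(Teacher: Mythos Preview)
Your argument is correct and follows essentially the same route as the paper's proof: compose \eqref{eqdefs} with $G$, use naturality of $\tilde\iota$ along $\chi_G$, and invoke Lemma~\ref{diachasing} to match the units $g$ and $\tilde g$. The only difference is presentational---the paper packages exactly these three ingredients into the single pasted diagram you describe in your final paragraph, rather than writing out the chain of equalities.
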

\begin{proof}
    Let $F$ be such an equivalence. By definition, the diagram (\ref{eqdefs})
commutes, hence the diagram 
\[\begin{tikzcd}
	{\I G[-1]} & {\I G[1]} \\
	{\Tilde{\I}FG[-1]} & {\Tilde{\I}FG[1]}
	\arrow["{\iota G}", from=1-1, to=1-2]
	\arrow["{\chi_{\I} G[-1]}"', from=1-1, to=2-1]
	\arrow["{\chi_{\I} G[1]}", from=1-2, to=2-2]
	\arrow["{\Tilde{\iota}FG}", from=2-1, to=2-2]
\end{tikzcd}\] commutes as well. On the other hand, the diagram
\[\begin{tikzcd}
	{\Tilde{\I}\Tilde{G}[-1]} & {\Tilde{\I}\Tilde{G}[1]} \\
	{\Tilde{\I} F G[-1]} & {\Tilde{\I}FG[1]}
	\arrow["{\Tilde{\iota}\Tilde{G}}", from=1-1, to=1-2]
	\arrow["{\Tilde{\I}\chi_G[-1]}"', from=1-1, to=2-1]
	\arrow["{\Tilde{\I}\chi_G[1]}", from=1-2, to=2-2]
	\arrow["{\Tilde{\iota}FG}", from=2-1, to=2-2]
\end{tikzcd}\]
commutes by naturality of $\Tilde{\iota}$. Pasting these and using Lemma \ref{diachasing}, we know that the diagram
\[\begin{tikzcd}
	& {\I  G[-1]} & {\I  G[1]} \\
	{\id_\T[-1]} & {\Tilde{\I} F G[-1]} & {\Tilde{\I}FG[1]} & {\id_\T[1]} \\
	& {\Tilde{\I}\Tilde{G}[-1]} & {\Tilde{\I}\Tilde{G}[1]}
	\arrow["{\iota G}", from=1-2, to=1-3]
	\arrow["{\chi_{\I}  G[-1]}", from=1-2, to=2-2]
	\arrow["{\chi_{\I}  G[1]}"', from=1-3, to=2-3]
	\arrow["g", from=2-1, to=1-2]
	\arrow[from=2-1, to=2-2]
	\arrow["{\Tilde{g}}"', from=2-1, to=3-2]
	\arrow["{\Tilde{\iota}FG}", from=2-2, to=2-3]
	\arrow["g"', from=2-4, to=1-3]
	\arrow[from=2-4, to=2-3]
	\arrow["{\Tilde{g}}", from=2-4, to=3-3]
	\arrow["{\Tilde{\I}\chi_G[-1]}"', from=3-2, to=2-2]
	\arrow["{\Tilde{\iota}\Tilde{G}}", from=3-2, to=3-3]
	\arrow["{\Tilde{\I}\chi_G[1]}", from=3-3, to=2-3]
\end{tikzcd}\] commutes, i.e. $\mu_\T(\T_\varepsilon)=\mu_\T(\Tilde{\T_\varepsilon)}$.
\end{proof}
\begin{Def}
    Define the set $\CatDef_\T({\ke})$ as the set of categorical deformations of $\T$ up to equivalence of deformations.
\end{Def}

By Proposition \ref{equivalenceofclass}, the map that assigns to a categorical deformation $\T_\varepsilon$ the class $\mu_\T(\T_\varepsilon)$ descends to the quotient, defining a morphism \[
\mu_\T\colon \CatDef_\T(\ke)\to \HH^2(\T).
\]

Our first main result is the following

\begin{Thm}\label{HH2Defo}
The map $\mu_\T$ defines a bijection between $\HH^2(\T)$ and the set $\CatDef_\T(\ke)$ of equivalence classes of categorical deformations of $\T$.
\end{Thm}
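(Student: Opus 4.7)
The plan is to prove the bijection by constructing an explicit inverse $\nu_\T\colon \HH^2(\T)\to \CatDef_\T(\ke)$ via the gluing construction of \S\ref{gluing} and then checking separately that $\mu_\T\circ \nu_\T=\id$ and $\nu_\T\circ \mu_\T=\id$.

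For the construction of $\nu_\T$, given a class $\mu\in \HH^2(\T)$ represented by a morphism $\mu\colon \id_\T[-1]\to \id_\T[1]$ in $D(\op{\T}\otimes \T)$, I would form the cone triangle
\[
\id_\T[-1] \tow{\mu} \id_\T[1] \tow{s} \phi \tow{t} \id_\T
\]
and set $\T_\varepsilon := \T\times_\phi \T$ (after verifying that $\phi$ is right quasi-representable). By \S\ref{gluing} this category comes equipped with a canonical recollement; the required Yoneda 2-extension is extracted from the $\alpha$-triangle \eqref{alphagluing} together with the defining triangle of $\phi$ by a repeated application of the octahedral axiom, producing an intermediate object $C'$ and a factorization of $\alpha$ through $C'$ of the required shape, with boundary maps assembled from $s\I$, $t\I$, and the connecting map $\gamma$ of \eqref{alphagluing}. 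Independence from the choice of representative for $\mu$ follows from Proposition \ref{morphext}.

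For the equality $\mu_\T\circ \nu_\T=\id$, I would apply Corollary \ref{triacor} to the gluing $\T\times_\phi \T$. The resulting triangle
\[
\id_\T[-1]\tow{\mu_\T(\T_\varepsilon)}\id_\T[1]\to KG[1]\to \id_\T
\]
has middle term $KG[1]=\phi$ by the formulas of \S\ref{gluing}, and by construction it coincides with the defining cone triangle of $\mu$; tracing through the identifications shows $\mu_\T(\T_\varepsilon)=\mu$.

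The converse $\nu_\T\circ\mu_\T=\id$ is the more delicate direction. Given a deformation $\T_\varepsilon$, I would set $\phi := KG[1]$ and, following the template of \cite{Kuznetsov_2014}, construct an equivalence of deformations $F\colon \T_\varepsilon\to \T\times_\phi \T$ whose underlying assignment on objects is $X\mapsto (QX,\,\I X,\,\beta_X)$, with $\beta\colon Q\to \phi\I$ coming from the canonical $\alpha$-triangle of $\T_\varepsilon$. Compatibility of $F$ with the semi-orthogonal decompositions would be built into the construction, while the fact that it is an equivalence of deformations should follow from Corollary \ref{triacor}, which identifies both Yoneda 2-extensions, up to morphism of extensions in the sense of Proposition \ref{morphext}, in terms of the common data $\phi=KG[1]$ and its defining triangle. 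I expect the main obstacle to lie precisely in the construction of $F$: our data live in the homotopy category of quasi-functors rather than in a strict dg functor category, so promoting the object-wise assignment to a bona fide quasi-functor requires handling higher coherence data (in particular for the structure map $\beta_X$) beyond what a direct appeal to \cite{Kuznetsov_2014} can provide. Surmounting this is likely to require either a carefully chosen strict dg model of the recollement or an appeal to reconstruction results for semi-orthogonal decompositions in the enhanced setting.
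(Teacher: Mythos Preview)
Your overall plan matches the paper's: construct an inverse via gluing along the cone of the class, then verify both composites are the identity. The construction of $\nu_\T$ and the check $\mu_\T\circ\nu_\T=\id$ are essentially as in the paper (the latter is carried out there via the octahedral diagram of Lemma~\ref{oct2} rather than Corollary~\ref{triacor}, but the content is the same).

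Where you part ways is the direction $\nu_\T\circ\mu_\T=\id$, and you have correctly located the difficulty: promoting the object-wise formula $X\mapsto (QX,\I X,\beta_X)$ to a genuine quasi-functor. The paper does \emph{not} resolve this by choosing a strict model or invoking reconstruction results. Instead it constructs $F$ directly as an object of $D(\op{\T_\varepsilon}\otimes(\T\times_\phi\T))$ via a chain of adjunction isomorphisms. With $\phi=KG[1]$ one has $\tilde K\tilde G=KG$, so
\[
\Hom(iQ,G\I[1])\cong\Hom(Q,KG\I[1])=\Hom(Q,\tilde K\tilde G\I[1])\cong\Hom(\tilde i Q,\tilde G\I[1]);
\]
the canonical connecting map $\partial\colon iQ\to G\I[1]$ from the recollement of $\T_\varepsilon$ is thereby transported to a map $\tilde i Q\to\tilde G\I[1]$, and $F$ is \emph{defined} as its fibre in the bimodule category. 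This produces the triangle $\tilde G\I\to F\to\tilde iQ\to\tilde G\I[1]$ automatically, so compatibility with the semiorthogonal decompositions is built in, and no object-level coherence ever needs to be managed. Your heuristic formula is recorded in the paper only as a remark after the fact.

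For the verification that $F$ is an equivalence of deformations, Proposition~\ref{morphext} is not the right tool: one must check commutativity of the square~\eqref{eqdefs}, which compares $\iota$ with $\tilde\iota F$. The paper's key step is Lemma~\ref{recovermap}, asserting that for \emph{any} categorical deformation the class $\iota\colon\I[-1]\to\I[1]$ equals $\mu_\T(\T_\varepsilon)\,\I$. Applying this to both $\T_\varepsilon$ and the gluing, the required square becomes a naturality square for $\mu_\T(\T_\varepsilon)$ along $\chi_\I$, which commutes trivially.
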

To prove Theorem \ref{HH2Defo}, we construct an explicit inverse to $\mu_\T$ in \S \ref{inverse}. The proof will be completed in \S \ref{proofof}.

\subsection{Constructing an inverse}\label{inverse}

\subsubsection{The construction}
Consider a class $\mu\in \HH^2(\T)$; we see $\mu$ as a natural transformation $\id_\T[-1]\tow{\mu} \id_\T[1]$, and complete it (non-canonically) to a triangle \[
\id_\T[-1]\tow{\mu} \id_\T[1] \tow{\nu}\phi \tow{\omega}\id_\T
\] in $D(\op{\T}\otimes \T)$. By construction $\phi$ is a quasi-functor $\T\to \T$ and we can construct the gluing \[
\T_\varepsilon= \T \times_\phi \T;
\] this comes equipped with the various functors and natural transformations described in \ref{gluing}. To construct the Yoneda extension, begin by defining the natural transformation 
$\I \tow{\delta_1} K$ as the composition \[
\I \tow{\nu \I} \phi \I[-1] \tow{\gamma} K
\] where $\gamma$ comes from \eqref{alphagluing}. Complete now $\delta_1$ to a triangle \[
\I \tow{\delta_1} K\tow{\zeta_1} C \tow{\eta_1} \I[1].
\] The octahedral axiom proves:
\begin{Lem}\label{oct2}
There is a triangle \begin{equation}\label{octa2}
C \tow{\eta_2}Q \tow{\delta_2}\I  \tow{\zeta_2} C[1].
\end{equation} in $D(\op{\T_\varepsilon}\otimes \T)$. Moreover, consider the diagram
\[\begin{tikzcd}
	& C \\
	{\I[-1]} && Q \\
	\I && K \\
	& {\phi \I [-1]}
	\arrow["{\eta_2}", from=1-2, to=2-3]
	\arrow["{\eta_1}"{pos=0.6}, dashed, from=1-2, to=3-1]
	\arrow["{\zeta_2}", from=2-1, to=1-2]
	\arrow["{\mu \I}"', dashed, from=2-1, to=3-1]
	\arrow["{\delta_2}"', dashed, from=2-3, to=2-1]
	\arrow[dashed, from=2-3, to=4-2]
	\arrow["{\delta_1}", from=3-1, to=3-3]
	\arrow["{\nu \I}"', from=3-1, to=4-2]
	\arrow["{\zeta_1}"{pos=0.4}, from=3-3, to=1-2]
	\arrow["\alpha"', from=3-3, to=2-3]
	\arrow[from=4-2, to=2-1]
	\arrow["\gamma"', from=4-2, to=3-3]
\end{tikzcd}\]
 where the dotted arrows have degree $1$. Then $\zeta_2$ and $\eta_2$ form commutative triangles with the faces other than \eqref{octa2} that contain them.
\end{Lem}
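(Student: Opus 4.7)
The plan is to recognize this as a direct application of the octahedral axiom in $D(\op{\T_\varepsilon}\otimes\T)$, entirely parallel to the proof of Lemma \ref{triaglue}. The key observation is that, by its very definition, $\delta_1$ is already exhibited as a composition
\[
I \xrightarrow{\;(\nu I)[-1]\;} \phi I[-1] \xrightarrow{\;\gamma[-1]\;} K,
\]
and the octahedral axiom applied to this factorization will produce the triangle (\ref{octa2}) together with all the required commutativities.

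Concretely, I would feed into the octahedral axiom the following three distinguished triangles. First, the shift by $[-1]$ of the defining triangle $\id_\T[-1]\xrightarrow{\mu}\id_\T[1]\xrightarrow{\nu}\phi\xrightarrow{\omega}\id_\T$, restricted to $I$, namely
\[
I \xrightarrow{(\nu I)[-1]} \phi I[-1] \xrightarrow{(\omega I)[-1]} I[-1] \xrightarrow{\mu I} I[1],
\]
which identifies $I[-1]$ as the cone of $(\nu I)[-1]$. Second, a shift of the gluing triangle \eqref{alphagluing},
\[
\phi I[-1] \xrightarrow{\gamma[-1]} K \xrightarrow{\alpha} Q \xrightarrow{\beta} \phi I,
\]
which identifies $Q$ as the cone of $\gamma[-1]$. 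Third, the defining triangle $I\xrightarrow{\delta_1}K\xrightarrow{\zeta_1}C\xrightarrow{\eta_1}I[1]$ of $C$, which identifies $C$ as the cone of the composition $\gamma[-1]\circ(\nu I)[-1]=\delta_1$. The octahedral axiom then yields a distinguished triangle
\[
I[-1] \xrightarrow{\zeta_2} C \xrightarrow{\eta_2} Q \xrightarrow{\delta_2} I
\]
between the three cones, which after rotation is precisely \eqref{octa2}.

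For the second assertion, the commutative faces of the resulting octahedron read off exactly as the claimed commutative triangles in the diagram. The face containing $\zeta_2$ and not equal to the newly produced distinguished triangle is the triangle with vertices $I[-1]$, $C$, $I[1]$, whose commutativity $\eta_1\zeta_2 = \mu I$ is one of the octahedron relations; similarly, the face containing $\eta_2$ and not equal to \eqref{octa2} is the triangle $K$, $C$, $Q$, whose commutativity $\eta_2\zeta_1=\alpha$ is another octahedron relation (and, incidentally, the condition built into Definition \ref{defdef} that the composition $K\to C\to Q$ recover $\alpha$). I do not anticipate any real obstacle beyond careful bookkeeping of shifts; the subtle point is only that the author's notation $I\xrightarrow{\nu I}\phi I[-1]\xrightarrow{\gamma}K$ implicitly absorbs the $[-1]$-shifts of $\nu I$ and $\gamma$, which must be tracked to ensure the octahedron matches the diagram's sign and shift conventions.
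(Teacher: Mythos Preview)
Your proposal is correct and is exactly the approach taken in the paper: the authors simply invoke the octahedral axiom applied to the factorization $\delta_1=\gamma\circ(\nu I)$, and your write-up spells out precisely the three triangles feeding into that octahedron. Your closing remark about the absorbed shifts is also accurate and matches the paper's conventions.
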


Hence, the composition $
K \tow{\zeta_1}C \tow{\eta_2} Q $ equals $\alpha$ and the two triangles \[
\I \tow{\delta_1}K \tow{\zeta_1} C \tow{\eta_1} \I[1] \text{ and } C \tow{\eta_2}Q \tow{\delta_2} \I \tow{\zeta_2} C[1]
\] give a Yoneda 2-extension compatible with the semiorthogonal decomposition. We have thus proved:
\begin{Prop}
For any Hochschild class $\mu\in \HH^2(\T)$, the category $\T_\varepsilon$ constructed above is a categorical deformation of $\T$.
\end{Prop}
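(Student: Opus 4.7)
The proof is essentially an assembly of pieces already put in place. The recollement half of the data comes for free from the gluing construction of \S \ref{gluing}: once we complete $\mu$ to a triangle $\id_\T[-1]\to\id_\T[1]\tow{\nu}\phi\tow{\omega}\id_\T$, the quasi-functor $\phi$ is right quasi-representable (every term in the triangle is, and this property is closed under cones in $D(\op{\T}\otimes\T)$), so $\T_\varepsilon=\T\times_\phi\T$ is a bona fide recollement with its associated functors $i,Q,K,\I,G$ and natural transformation $\alpha\colon K\to Q$ fitting in the triangle \eqref{alphagluing}. Hence condition (1) of Definition \ref{defdef} is automatic.

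What remains is condition (2): producing a Yoneda $2$-extension with splicing sequence $0\to\I\tow{\delta_1}K\tow{\alpha}Q\tow{\delta_2}\I\to 0$. The definition of $\delta_1=\gamma\circ(\nu\I)$ already fixes the first triangle $\I\tow{\delta_1}K\tow{\zeta_1}C\tow{\eta_1}\I[1]$. For the second triangle and the factorization $\alpha=\eta_2\zeta_1$, my plan is to invoke Lemma \ref{oct2}: apply the octahedral axiom to the factorization
\[
\delta_1\colon \I\tow{\nu\I}\phi\I[-1]\tow{\gamma}K,
\]
using on one hand the triangle coming from $\nu\I$ (i.e.\ the shift of $\id_\T[-1]\tow{\mu}\id_\T[1]\tow{\nu}\phi$ composed on the right with $\I$), and on the other hand the triangle \eqref{alphagluing} rotated to read $\phi\I[-1]\tow{\gamma}K\tow{\alpha}Q\tow{\beta}\phi\I$. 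The octahedron produces a fourth triangle of the form $C\tow{\eta_2}Q\tow{\delta_2}\I\tow{\zeta_2}C[1]$ together with the commutativity $\eta_2\circ\zeta_1=\alpha$, which is exactly the splicing relation we need.

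Putting this together, the two triangles $\I\tow{\delta_1}K\tow{\zeta_1}C\tow{\eta_1}\I[1]$ and $C\tow{\eta_2}Q\tow{\delta_2}\I\tow{\zeta_2}C[1]$ constitute a Yoneda $2$-extension from $\I$ to $\I$ in $D(\op{\T_\varepsilon}\otimes\T)$, and the compatibility $\eta_2\zeta_1=\alpha$ from the octahedron makes the splicing sequence agree with the one prescribed in Definition \ref{defdef}. This verifies condition (2) and concludes the proof.

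The only potentially nontrivial step is invoking the octahedral axiom in the correct form, which is precisely what Lemma \ref{oct2} packages; once that lemma is in hand, the rest of the verification is a direct check against Definition \ref{defdef}. Note also that the construction depends on the noncanonical choice of cone $\phi$ of $\mu$ and on the completion of $\delta_1$ to a triangle, but neither choice affects the status of $\T_\varepsilon$ as a categorical deformation; the independence up to equivalence of deformations will be the content of the subsequent sections culminating in Theorem \ref{HH2Defo}.
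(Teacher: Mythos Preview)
Your proposal is correct and follows essentially the same route as the paper: the recollement is inherited from the gluing construction of \S\ref{gluing}, and the Yoneda $2$-extension with the required factorization $\alpha=\eta_2\zeta_1$ is obtained by applying the octahedral axiom to the composite $\delta_1=\gamma\circ(\nu\I)$, exactly as packaged in Lemma~\ref{oct2}. Your additional remark that $\phi$ is right quasi-representable is a reasonable sanity check not spelled out in the paper, and your closing comment on the noncanonical choices anticipates precisely the discussion in \S\ref{welldef}.
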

\subsubsection{Well-definedness}\label{welldef}
In our construction we made two arbitrary choices: the choices of the cones $\phi$ of $\mu$ and $C$ of $\delta_1$; we'll show that in each casedifferent choices give rise to equivalent deformations. For $\delta_1$, this is very easy; since different choices of cones do not alter the underlying category, we can pick the identity functor as an equivalence; this clearly preserves the class $\iota$, since that is not altered by a different choice of cone for $\delta_1$. On the other hand, changing the choice of cone of $\mu$ does change the underlying category; let $\phi'$ be another choice for that cone. Then we always have a (noncanonical) isomorphism $f\colon \phi\to \phi'$ in $D(\op\T\otimes \T)$. Assume for now that this $f$ is given by a honest map of dg bimodules; then one can define the functor \[
\begin{split}
    \T_\varepsilon \times_\phi \T_\varepsilon &\to \T_\varepsilon \times_{\phi'}\T_\varepsilon\\
    (M_1, M_2, n) &\to (M_1, M_2, f(n))
\end{split}
\] which, for the same reason as above, is readily seen to be an equivalence of deformations. In the case where $f$ is instead given by a zigzag of quasi-isomorphism, we simply obtain a zigzag of quasi-functors.

\section{Proof of Theorem \ref{HH2Defo}}\label{proofof}
In the previous section, we have shown that there is a well-defined map \[
\HH^2(\T)\to \CatDef_\T(\ke)
\] which assigns to a Hochschild class the gluing along its cone. In this section we show that this gives an inverse of $\mu_\T$.
\subsection{Class to class}\label{class2class}
First we have to show that if we start with a class $\mu$, take its cone $\phi$, construct the gluing $\T_\varepsilon=\T\times_\phi \T$ and then take the class $\mu_\T(\T_\varepsilon)$ we recover the starting class $\mu$. This essentially follows from Lemma \ref{oct2}: by construction, the class $\mu_\T(\T_\varepsilon)$ is obtained by first taking the composition \[
\I[-1] \tow{\zeta_2}C \tow{\eta_1} \I[1]
\] and then composing it with $G$. By Lemma \ref{oct2} we know that $\eta_1 \zeta_2=\mu \I$, so we only have to show that the diagram
\[\begin{tikzcd}
	{\id_{\T}[-1]} & {\id_{\T}[1]} \\
	{\I  G[-1]} & {\I  G[1]}
	\arrow["\mu", from=1-1, to=1-2]
	\arrow["g", from=1-1, to=2-1]
	\arrow["g"', from=1-2, to=2-2]
	\arrow["{\mu \I G}", from=2-1, to=2-2]
\end{tikzcd}\] commutes; but that follows by naturality of $\mu$.
\subsection{Deformation to deformation}
We now have to prove that if we start with a deformation $\T_\varepsilon$, take the associated class $\mu_\T(\T_\varepsilon)$, take its cone $\phi$, then the gluing $\T_\varepsilon'=\T \times_\phi \T$ is equivalent to $\T_\varepsilon$ as a deformation.

We first ought to construct a suitable quasi-functor $F\colon \T_\varepsilon\to\T\times_\phi\T $; we have the triangle  \[
    \id_\T[-1]\tow{\mu_\T(\T_\varepsilon)} \id_\T[1]\tow{}KG[1]\to \id_\T;
    \] from Corollary \ref{triacor} and, since choosing a different cone for $\mu_\T(\T_\varepsilon)$ yields equivalent deformations (see Section \ref{welldef}) we can pick $\phi=KG[1]$. Denoting with $\Tilde{i}, \Tilde{G}, \Tilde{K}$ the relevant functors of the gluing $\T\times_{KG[1]}\T$, observe that $\Tilde{K}\Tilde{i}=KG$ by construction -- the composition of the two adjoints always equals the gluing functor. Recall that, by virtue of being an extension, the category $\T_\varepsilon$ comes equipped with a morphism \[
iQ\to G\I[1]
\] in $D(\op{\T_\varepsilon}\otimes \T_\varepsilon)$. We then have \[
\begin{split}
    \Hm{D(\op{\T_\varepsilon}\otimes \T_\varepsilon)}(iQ, G\I[1])&\cong \Hm{D(\op{\T_\varepsilon}\otimes \T)}(Q, KG\I[1])= \Hm{D(\op{\T_\varepsilon}\otimes \T)}(Q, \Tilde{K}\Tilde{G}\I [1])\\ &\cong \Hm{D(\op{\T_\varepsilon}\otimes (\T\times_{KG}\T))}(\Tilde{i}Q, \Tilde{G}\I [1])
\end{split}
\] whence we obtain a morphism $\Tilde{i}Q\to \Tilde{G}\I [1]$ that we can complete to a triangle \[
\Tilde{G}\I \to F \to \Tilde{i}Q\to \Tilde{G}\I [1].
\] By the same argument as \cite[Proposition 7.7]{Kuznetsov_2014} the bimodule $F$ is a quasi-functor. To see that $F$ is a quasi-equivalence, the same process can be applied to obtain a triangle \[
G\Tilde{\I}\to F' \to i\Tilde{Q}\to G\Tilde{\I}[1]
\]  and one sees that $F'$ is a quasi-inverse to $F$.
\begin{Rem}
    Heuristically, the functor $F$ is  given by \[
    \begin{split}
    \T_\varepsilon&\to \T\times_{KG[1]}\T\\
    M &\to (QM, \I M, QM \to KG\I M[1]),
    \end{split}
    \]where the map $QM\to KG\I M[1]$ is obtained from the canonical map $iQ\to G\I[1]$ via the isomorphism \[
    \Hm{}(iQ, G\I[1])\cong \Hm{}(Q, KG\I[1]).
    \] One can then also directly show along the lines of \cite[Proposition 4.14]{Kuznetsov_2014} that $F$ is an equivalence.
\end{Rem}

To conclude, we'll need the following lemma.
\begin{Lem}\label{recovermap}
    Let $\T_\varepsilon$ be a categorical deformation. Then the induced natural transformation $\iota\colon \I[-1] \to \I[1]$ coincides with $\mu_\T(\T_\varepsilon) \I$.
\end{Lem}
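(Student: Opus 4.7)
The plan is to exploit the defining property of $\mu := \mu_\T(\T_\varepsilon)$ together with the naturality of $\iota$ with respect to the counit $\xi \colon G\I \to \id_{\T_\varepsilon}$ of the adjunction $G \dashv \I$.

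First I would whisker the defining relation $g \circ \mu = \iota G \circ g$ (from the definition in \S\ref{defsec}) on the right by the quasi-functor $\I \colon \T_\varepsilon \to \T$. This produces the identity
\[
g\I \circ \mu\I \,=\, \iota G\I \circ g\I
\]
of maps $\I[-1] \to \I G\I [1]$ in $D(\op{\T_\varepsilon} \otimes \T)$. Since $g \colon \id_\T \to \I G$ is an isomorphism, the whiskered transformation $g\I$ is also invertible. It therefore suffices to prove that $\iota$ itself satisfies the analogous identity $g\I[1] \circ \iota = \iota G\I \circ g\I[-1]$; cancelling $g\I[1]$ will then force $\iota = \mu\I$.

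For that identity, I would invoke the naturality of $\iota$, viewed as a natural transformation between quasi-functors $\T_\varepsilon \to \T$, evaluated on the counit $\xi \colon G\I \to \id_{\T_\varepsilon}$ (a natural transformation between functors $\T_\varepsilon \to \T_\varepsilon$). This yields
\[
\iota \circ \I\xi[-1] \,=\, \I\xi[1] \circ \iota G\I,
\]
and the adjunction triangle identity $\I\xi \circ g\I = \id_\I$ exhibits $\I\xi$ as inverse to $g\I$. Precomposing the naturality equation with $g\I[-1]$ and postcomposing with $(\I\xi[1])^{-1} = g\I[1]$ then rearranges into exactly $g\I[1] \circ \iota = \iota G\I \circ g\I[-1]$.

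I do not expect any serious obstacle; the only real thing to monitor is the bookkeeping of shifts and whiskerings. Conceptually, the content is that $\iota$ is already natural in the object of $\T_\varepsilon$, so evaluating it at $M$ versus at $G\I M$ differs by conjugation with the iso $g\I$ — and this is precisely the way $\mu\I$ is built out of $\iota$ via the unit $g$.
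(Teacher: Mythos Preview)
Your proposal is correct and follows essentially the same route as the paper's proof: both establish the two commuting squares
\[
g\I[1]\circ \mu\I = \iota G\I \circ g\I[-1]
\quad\text{and}\quad
g\I[1]\circ \iota = \iota G\I \circ g\I[-1],
\]
and then cancel the isomorphism $g\I$. The only difference is cosmetic: the paper asserts the second square ``by naturality of $\iota$'' without further comment, whereas you make this explicit by invoking naturality with respect to the counit $\xi$ together with the triangle identity $\I\xi \circ g\I = \id_\I$.
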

\begin{proof}
    Consider the natural isomorphism $\I \tow{g\I}\I G\I$. The diagram
\[\begin{tikzcd}
	{\I[-1]} & {\I G\I[-1]} \\
	{\I[1]} & {\I G\I[1]}
	\arrow["g\I", from=1-1, to=1-2]
	\arrow["\iota", from=1-1, to=2-1]
	\arrow["\iota G\I"', from=1-2, to=2-2]
	\arrow["g\I", from=2-1, to=2-2]
\end{tikzcd}\]
commutes by naturality of $\iota$. The natural transformation $\mu_\T(\T_\varepsilon)$ is defined via the square
\[\begin{tikzcd}
	{\id_\T[-1]} & {\I G[-1]} \\
	{\id_\T[1]} & {\I G[1]}
	\arrow["g", from=1-1, to=1-2]
	\arrow["{\mu_\T(\T_\varepsilon)}", from=1-1, to=2-1]
	\arrow["\iota G"', from=1-2, to=2-2]
	\arrow["g", from=2-1, to=2-2]
\end{tikzcd}\] hence the diagram
\[\begin{tikzcd}
	{\I[-1]} & {\I G\I[-1]} \\
	{\I[1]} & {\I G\I[1]}
	\arrow["{g \I}", from=1-1, to=1-2]
	\arrow["{\mu_\T(\T_\varepsilon) \I}", from=1-1, to=2-1]
	\arrow["{\iota G\I}", from=1-2, to=2-2]
	\arrow["{g \I}", from=2-1, to=2-2]
\end{tikzcd}\] must commute. Pasting these we obtain a commutative diagram
\[\begin{tikzcd}
	{\I[-1]} & {\I G\I[-1]} & {\I[-1]} \\
	{\I[1]} & {\I G\I[1]} & {\I[1]}
	\arrow["{g \I}", from=1-1, to=1-2]
	\arrow["{\mu_\T(\T_\varepsilon) \I.}", from=1-1, to=2-1]
	\arrow["{\iota G\I}", from=1-2, to=2-2]
	\arrow["g\I"', from=1-3, to=1-2]
	\arrow["\iota", from=1-3, to=2-3]
	\arrow["{g \I}", from=2-1, to=2-2]
	\arrow["g\I"', from=2-3, to=2-2]
\end{tikzcd}\]
Since the horizontal arrows are isomorphisms, we get the claim.
\end{proof}
\begin{Rem}
    For the proof we never used that $\T_\varepsilon$ was a deformation, but only that it is an extension. Indeed the same proof shows that any natural transformation $\I \to \I$ can be recovered from the induced map $\id_\T\cong \I G\to \I G\cong \id_\T$.
\end{Rem}
We can now prove that $F$ is an equivalence of deformations. The goal is to show that the diagram 
\[\begin{tikzcd}
	{\I F[-1]} & {\I F[1]} \\
	{\I'[-1]} & {\I'[1]}
	\arrow["{\iota F}", from=1-1, to=1-2]
	\arrow["{\chi_{\I}}", from=1-1, to=2-1]
	\arrow["{\chi_{\I}}", from=1-2, to=2-2]
	\arrow["{\iota'}", from=2-1, to=2-2]
\end{tikzcd}\] is commutative. Using Lemma \ref{recovermap} and that by definition $\iota'=\mu_\T(\T_\varepsilon)\I'$, this is the same diagram as 
\[\begin{tikzcd}[column sep=2.5em]
	{\I F[-1]} & {\I F[1]} \\
	{\I'[-1]} & {\I'[1]}
	\arrow["{\mu(\T_\varepsilon)\I F}", from=1-1, to=1-2]
	\arrow["{\chi_\I}", from=1-1, to=2-1]
	\arrow["{\chi_\I}", from=1-2, to=2-2]
	\arrow["{\mu(\T_\varepsilon)\I'}", from=2-1, to=2-2]
\end{tikzcd}\] and now the claim follows by naturality of $\mu_\T(T_\varepsilon)$.

\section{Extending $A_\infty$-functors}\label{secextend}
In this section we record some technical results about extensions of $A_\infty$-functors that will be needed in the later section (Proposition \ref{propextend}); we also observe a relation between this construction and Hochschild cohomology.
\subsection{On enhancements}
Let $A$ be a dg algebra. We will work with a specific model for the (enhanced) derived category $D(A)$, given by the category $\operatorname{Tw}(A)$ of (one-sided) \emph{twisted complexes}. An object of this category is given by an ordinal $I$ and a pair \[
M=(\oplus_{i\in I} A[n_i], \{f_{ij}\}_{i,j\in I})
\] where $f_{ij}\in A[n_i-n_j]$ have the property that $f_{ij}=0$ for $i\leq j$ and $df+f^2=0$ (see e.g. \cite{BLL_pretriangulated, lowchar} for more details). The Hom-complex between two twisted object is given by the appropriate space of matrices, with twisted differential. The object $M$ can be seen as the graded $A$-module $\oplus_i A[n_i]$ with differential given by $d+f$.  There exists a fully faithful totalization dg functor \[
\operatorname{Tw}(A)\to A\Mod
\] which sends a twisted module to the just described $A$-module. Its essential image is given by the semi-free $A$-modules, hence $\operatorname{Tw}(A)$ gives a dg enhancement of $D(A)$. In the following, we will not distinguish between $D(A)$ and its enhancement $\operatorname{Tw}(A)$.
\subsection{Extension of functors}
 If $\A$ and $\B$ are pretriangulated dg categories, denote with $\Fun_{A_\infty}(\A, \B)$ the dg category of $A_\infty$-functors from $\A$ to $\B$ \cite{Ainffunct}; similarly, if $\A$ and $\B$ admit small coproducts, denote with $\Fun_{A_\infty}^c(\A, \B)$ the dg category of cocontinuous $A_\infty$-functors from $\A$ to $\B$ - that is, $A_\infty$-functors for which the underlying $H^0(\A) \rightarrow H^0(\B)$ preserves small coproducts. It is a well-known fact (see \cite[Proposition 3.27]{WellGenRamos}) that the restriction dg functor \[
\Fun_{A_\infty}^c(D(A), \B)\to\Fun_{A_\infty}(A, \B) 
\] along the Yoneda embedding $A\to D(A)$ is a quasi-equivalence. The enhancement $\operatorname{Tw}(A)$ allows for an easy description of a quasi-inverse to the restriction, given by the natural extension of an $A_\infty$-functor to the category of twisted objects. This is well-known and often used implicitly in the literature (see e.g. \cite[Section 2]{HaidenCY}); similar computations also appear in \cite{AnnoSph, AnnoBarDer}. For future use, we describe explicitly the formulas appearing in the extension:
\begin{Prop}\label{propextend}
    Any $A_\infty$-functor $
    F\colon A \to \B$ can be canonically extended to a cocontinuous $A_\infty$-functor \[
    \chi_A(F)\colon D(A)\to \B.
    \] This assignment provides a dg functor \[
    \chi_A\colon \Fun_{A_\infty}(A, \B)\to \Fun_{A_\infty}(D(A), \B)
    \] which preserves triangles in the homotopy category.
\end{Prop}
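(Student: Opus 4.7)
The plan is to construct $\chi_A(F)$ directly on the twisted complex model $\operatorname{Tw}(A)$, exploiting the fact that the twisting matrices are strictly lower triangular. For an object $M = (\bigoplus_{i \in I} A[n_i], \{f_{ij}\}) \in \operatorname{Tw}(A)$, I would set $\chi_A(F)(M) = (\bigoplus_{i \in I} F(A)[n_i], \{\tilde f_{ij}\})$, where the twisting matrix $\tilde f$ is obtained by the Maurer--Cartan-like formula $\tilde f = \sum_{k \geq 1} F_k(f, \ldots, f)$. Lower-triangularity of $f$ guarantees that each matrix entry of this sum is a finite sum; that the resulting data satisfies the twisting equation $d \tilde f + \tilde f^{2} = 0$ then follows from the $A_\infty$-relations for $F$ applied to the string $(f, \ldots, f)$.

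Next I would define the higher Taylor components of $\chi_A(F)$ on morphisms. For a composable string $\phi_n, \ldots, \phi_1$ of morphisms between twisted complexes $M_0, \ldots, M_n$, set
\[
\chi_A(F)_n(\phi_n, \ldots, \phi_1) \;=\; \sum_{k_0, \ldots, k_n \geq 0} F_{n + k_0 + \cdots + k_n}\bigl(f_{M_n}^{\otimes k_n}, \phi_n, f_{M_{n-1}}^{\otimes k_{n-1}}, \ldots, \phi_1, f_{M_0}^{\otimes k_0}\bigr).
\]
Finiteness of each matrix entry is again secured by lower-triangularity. The $A_\infty$-relations for $\chi_A(F)$ follow from a standard bookkeeping argument: expanding the $A_\infty$-relations for $F$ on strings containing the $\phi_k$'s together with copies of the twisting data, and separating the resulting terms according to whether the inner $A_\infty$-operation lands on a block of pure $f_{M_j}$'s (contributing $d \tilde f + \tilde f^{2} = 0$ on the corresponding object) or genuinely involves the $\phi_k$'s.

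For the dg functoriality in $F$, I would extend an $A_\infty$-pre-natural transformation $\eta : F \to G$ by analogous insertion formulas, whose component at $M$ reads $\chi_A(\eta)_M = \sum_{k \geq 0} \eta_{k+1}(f_M, \ldots, f_M)$; that $\chi_A$ respects both differentials and compositions on morphism complexes reduces to the same kind of bookkeeping used above. Cocontinuity of $\chi_A(F)$ is immediate, since arbitrary coproducts in $\operatorname{Tw}(A)$ and in $\B$ are both computed by direct-summing the underlying graded pieces, and the insertion formulas above are strictly additive with respect to this decomposition.

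Finally, triangles in the homotopy category of $A_\infty$-functors are, up to isomorphism, precisely mapping cones of closed degree zero natural transformations, and these cones are themselves built as two-term twisted complexes in the relevant morphism complex. Since $\chi_A$ is a dg functor on those complexes, it sends such cones to cones and hence preserves triangles. The main obstacle in this plan is the verification of the $A_\infty$-relations for $\chi_A(F)$ on morphisms: while conceptually formal, it demands careful tracking of Koszul signs and of the interplay between the Maurer--Cartan equation for the twisting data and the $A_\infty$-axioms for $F$.
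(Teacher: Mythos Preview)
Your proposal is correct and follows essentially the same approach as the paper: both construct $\chi_A(F)$ on the twisted-complex model via the Maurer--Cartan push-forward $\tilde f=\sum_k F_k(f,\dots,f)$ and the identical insertion formulas for the higher components and for natural transformations, leaving the sign/bookkeeping verification of the $A_\infty$-relations to the reader. You are slightly more explicit than the paper about why the sums are finite (lower-triangularity) and about cocontinuity, but the argument is the same.
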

\begin{proof}[Construction of the extension]
  Preliminarily, observe that since $\B$ is pretriangulated and closed under coproducts, there is a fully faithful totalization dg functor \[
  \operatorname{Tw}(\B)\to \B.
  \] Hence, it will be enough to describe $\chi_A(F)$ as a functor $\operatorname{Tw}(A)\to \operatorname{Tw}(\B)$. For simplicity we omit the signs from the formulas; the interested reader can consult \cite{lowchar} for the precise sign conventions. By definition, $f$ is given by an object $FA\in \B$ together with an $A_\infty$-algebra morphism (see Section \ref{leftadj}) \[
  A\to \Hm{\B}(FA,FA)
  \] given by components $F_i\colon A^{\otimes i}\to \Hm{\B}(FA, FA)$. Given $M=(\oplus A_i[n_i], \delta_M)$ in $\operatorname{Tw}(A)$ set  \[
    F(\delta_M)=\sum_i F_i(\delta_M^{\otimes i})
    \] and define 
    $\chi_A(F)(M)=(\oplus_i fA[n_i], F(\delta))$.
    Given \[M=(\oplus_{i\in I}A[n_i], \delta_M)\text{, } N=(\oplus_{i\in J}A[n_j], \delta_N) \text{ and }
    g\in \Hm{\operatorname{Tw}(A)}(M, N),
    \]define then \[ \chi_A(F)_1(g)=\sum_{i\geq 1}F_i(\delta^k\otimes g\otimes  \delta^{\otimes l});\]
  in general, given $M_0, \ldots M_n\in \operatorname{Tw}(A)$ and $g^i\in \Hm{\operatorname{Tw}(A)}(M_{i-1}, M_{i})$ define the component $
    \chi_A(F)_n(g^1, \ldots g^n)$ as \[ \sum_{i\geq n} \sum_{k_0+\ldots k_n=i-n}f_i(\delta^{\otimes k_0}\otimes g^1\otimes\delta^{\otimes k_1}\otimes g^2\otimes\ldots \otimes \delta^{\otimes k_{n-1}} \otimes g^n \otimes \delta^{\otimes k_n}).
    \] One verifies that these formulas define an $A_\infty$-functor. To enhance $\chi_A(-)$ to a dg functor, one must define an action of natural transformation, that is: given two functors \[
    f, g \colon A \to \B
    \] and an $A_\infty$ natural transformation $\eta\colon f \to g$ given by components \[
    \eta_i\colon A^{\otimes i}\to \Hm{\B}({fA, hA})
    \] we want to construct an $A_\infty$-natural transformation $\chi_A(f)\to \chi_A(h)$. We define the component $\chi_A(\eta)_n(g^1, \ldots g^n)$ as \[ \sum_{i\geq n} \sum_{k_0+\ldots k_n=i-n}\eta_i(\delta^{\otimes k_0}\otimes g^1\otimes\delta^{\otimes k_1}\otimes g^2\otimes\ldots \otimes \delta^{\otimes k_{n-1}} \otimes g^n \otimes \delta^{\otimes k_n}).
    \]
    One verifies that $\chi_A$ commutes (strictly) with the composition of natural transformations and with the differentials, hence it defines a dg (and not $A_\infty$!) functor \[
     \chi_A\colon \Fun_{A_\infty}(A, \B)\to \Fun_{A_\infty}(D(A), \B).
    \]

\end{proof}
\subsection{The characteristic morphism}
As already noted, the formulas from the previous section are essentially the same appearing in \cite{lowchar}; let us make this fact precise. Let $C^\bullet(A)$ be the Hochschild cochain complex of $A$. It is well-known that there is an isomorphism \[
C^\bullet(A)\cong \Hm{\Fun_{A_\infty}(A,A)}(\id_A, \id_A),
\]where $A$ is viewed as a dg category with one object, and $\id_A$ is the identity  dg functor $A\tow{\id_A}A$. At the same time, there is a fully faithful embedding \[
\Fun_{A_\infty}(A,A)\hookrightarrow \Fun_{A_\infty}(A,D(A))
\] induced by the embedding $A\overset{\operatorname{inc}}{\hookrightarrow} D(A)$. The extension via $\chi_A$ of the inclusion $A\to D(A)$ is the identity functor of $D(A)$, hence the image of $\id_A$ via the composition \[
\Fun_{A_\infty}(A,A)\hookrightarrow \Fun_{A_\infty}(A,D(A)) \tow{\chi_A} \Fun_{A_\infty}(D(A),D(A))
\] is $\id_{D(A)}$. Therefore, the functor $\chi_A$ defines a map \[
C^\bullet(A)\cong \Hm{\Fun_{A_\infty}(A,A)}(\id_A, \id_A)\to \Hm{\Fun_{A_\infty}(D(A),D(A))}(\id_{D(A)}, \id_{D(A)})\cong C^\bullet(D(A)).
\] This coincides with the map $\chi_A$ defined in \cite{lowchar}.

\section{The 1-derived category}
In this section we show that, for a cdg deformation $A_\varepsilon$ of a dg algebra $A$, its filtered derived category $D^\varepsilon(A_\varepsilon)$ is a categorical deformation of $D(A)$, and the cocycle that it identifies corresponds to the one induced by the deformation $A_\varepsilon$ (Theorem \ref{classisclass}).
\subsection{Curved algebras and deformations}

We now briefly recall the notion of curved deformations and their 1-derived categories; for more details, see \cite{Nder, MoritacDef} and the references therein.

Let $R$ be a commutative ring. A cdg (curved differential graded) $R$-algebra $\A$ is given by a graded $R$-algebra $\A^\#$ together with a degree 1 derivation $d_\A\in \Hom_R(\A^\#,\A^\#)$ -- called (pre)differential -- and an element $c\in \A^2$ such that $d_\A(c)=0$ and $d^2_\A=[c, -]$ -- called curvature. 

A right cdg module over a cdg algebra $\A$ is a right graded $\A^\#$-module $M^\#$ equipped with a degree $1$ derivation $d_M\in \Hom_R(M^\#,M^\#)^1$ such that $d^2_M m= -mc$ for all $m\in M$; the category of right cdg $\A$-modules is in a natural way a dg category, that we'll denote with $\A\Mod$.

A cdg (first order) deformation of a dg algebra $A$ is given by a structure $A_\varepsilon$ of cdg $\ke$-algebra on the graded $\ke$-module $A\otimes_k\ke\cong A[t]/t^2A[t]$ which reduces mod $t$ to the dg algebra structure of $A$. Any curved deformation $A_\varepsilon$ corresponds to a certain Hochschild cocycle $\mu_A\in \mathbf{C}^2(A)$; write $\mu_i\in \Hm{k}(A^{\otimes i}, A)$ for its $i$-th component. 

The 1-derived category $D^\varepsilon(A_\varepsilon)$ of $A_\varepsilon$ is defined as the quotient of the homotopy category $H^0(A_\varepsilon\Mod)$ by the subcategory given by the modules for which the associated graded with respect to the $t$-adic filtration is acyclic.

 The goal of this section is to prove the following:
\begin{Thm}\label{classisclass}
    Let $A$ be a dg algebra, and $A_\varepsilon$ a cdg deformation of $A$ corresponding to a Hochschild cocycle $\mu_A\in \mathbf{C}^2(A)$. Then the category $D^\varepsilon (A_\varepsilon
    )$ is a categorical deformation of $D(A)$, and the class $\mu_{D(A)}(D^\varepsilon(A_\varepsilon
    ))\in \HH^2(D(A))$ coincides with $\chi_A(\mu_A)$.
\end{Thm}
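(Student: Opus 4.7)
The plan is to split the proof into three stages: establishing the recollement, constructing the Yoneda 2-extension, and identifying the class via the characteristic morphism.

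First, I would appeal to the work \cite{Nder} to obtain the recollement
\[
D(A) \tow{i} D^\varepsilon(A_\varepsilon) \tow{\I} D(A)
\]
along with all six adjoints. Here $i$ corresponds to the restriction of scalars along the square-zero ideal $t \colon A \to A_\varepsilon$ (or dually the reduction $A_\varepsilon \to A$), and $\I$ is the functor induced by extending scalars from $A_\varepsilon$ to $A$, viewed on the 1-derived category. The left adjoint $G$ to $\I$ is, heuristically, the one that sends $A$ to the object $\Gamma$ introduced in \cite{Nder} (the ``two-sided cone''), which together with $A_\varepsilon$ compactly generates $D^\varepsilon(A_\varepsilon)$. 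That these data form a recollement in our sense is essentially already proved in \cite{Nder}.

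Next I would construct the Yoneda 2-extension. The strategy is to exploit the natural short exact sequence $0 \to tM \to M \to M/tM \to 0$ associated to multiplication by $t$ on an $A_\varepsilon$-module $M$: this provides natural transformations $\I \to K$ and $Q \to \I$ whose middle term is controlled by $\Gamma \cong KG[1]$ (which fits with Corollary \ref{triacor}). The composite $K \to Q$ so produced coincides with $\alpha$ by the defining property of the recollement, so this indeed assembles into a 2-extension $0 \to \I \to K \to Q \to \I \to 0$ compatible with the semiorthogonal decomposition. Thus we obtain a categorical deformation in the sense of Definition \ref{defdef}.

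The final and most delicate step is to identify the induced class $\mu_{D(A)}(D^\varepsilon(A_\varepsilon)) \in \HH^2(D(A))$ with $\chi_A(\mu_A)$. By Section \ref{leftadj} (referenced in the introduction), the quasi-functor $G$ admits an explicit $A_\infty$ description $A \to D^\varepsilon(A_\varepsilon)$, in which the components $\mu_i$ of the Hochschild cocycle $\mu_A$ appear as the higher multiplications of $G$ when computed against $\Gamma$. The strategy is then to pre-compose the 2-extension with $G$ and evaluate at the generator $A \in D(A)$; using the concrete form of $G$ and the description of cones in $D^\varepsilon(A_\varepsilon)$ coming from $\Gamma$, the connecting map of the resulting 2-extension, viewed as a morphism in $\mathbf{C}^\bullet(A) \cong \Hom_{\Fun_{A_\infty}(A,A)}(\id_A, \id_A)$, should be exactly $\mu_A$. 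By Proposition \ref{propextend} and the discussion after it, the characteristic morphism $\chi_A$ is precisely the cocontinuous extension along the Yoneda embedding $A \hookrightarrow D(A)$ of such $A_\infty$-natural transformations. Since both $\mu_{D(A)}(D^\varepsilon(A_\varepsilon))$ and $\chi_A(\mu_A)$ are cocontinuous extensions of the same class $\mu_A$ on $A$, they must agree in $\HH^2(D(A))$.

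The main obstacle I foresee is the precise identification in the last step: unwinding the $A_\infty$-structure of $G$ along the object $\Gamma$ and matching the boundary maps of the Yoneda 2-extension with the Hochschild components $\mu_i$. This is a concrete but delicate computation, and it is where the specific shape of $\Gamma$ as a two-sided cone from \cite{Nder} becomes essential; establishing this compatibility is what forces the use of $A_\infty$-functors rather than honest dg ones.
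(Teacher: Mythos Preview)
Your overall architecture matches the paper's: recollement from \cite{Nder}, Yoneda 2-extension from the $t$-action, $A_\infty$ model for $G$ via $\Gamma$, then identification of the class through $\chi_A$. Two points of imprecision are worth flagging. First, the 2-extension does not come from the sequence $0 \to tM \to M \to M/tM \to 0$; in the paper's recollement one has $\I = \Img t$, $K = \Ker t$, $Q = \Coker t$, and the relevant sequence is the four-term one
\[
0 \to tM \tow{\delta_1} \Ker t_M \tow{\alpha} M/tM \tow{\delta_2} tM \to 0,
\]
split through the middle term $C = \frac{\Ker t}{\Img t}$ (not $KG[1]$). Second, the statement ``$\Gamma \cong KG[1]$'' conflates an object of $D^\varepsilon(A_\varepsilon)$ with a $\T$-$\T$ bimodule; rather, $\Gamma = G_\infty(A)$, and what the paper actually computes is that $\Ker t \circ G_\infty$ is the cone of $\chi_A(\mu_A)$ while $\Coker t \circ G_\infty$ is the cone of the identity, from which the two boundary maps of the 2-extension (after composing with $G_\infty$) are read off directly as $\chi_A(\mu_A)$ and the identity. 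With these corrections your plan is exactly the paper's proof.
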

Note that we have used the notation $\mu_A$ to denote both the cocycle in $\mathbf{C}^2(A)$ and the corresponding class in $\HH^2(A)$; this makes sense because the map $\chi_A$ is defined at the level of cocycles, and not only for classes.
\begin{Rem}
    Specifically, the above result implies (in fact, since we are operating at the level of classes is equivalent to) the fact that the image of $\mu_{D(A)}(D^\varepsilon(A_\varepsilon))$ via the restriction $\HH^2(D(A))\to \HH^2(A)$ coincides with $\mu_A$.
\end{Rem}
\subsection{Categorical deformations and the 1-derived category}

It is straightforward to see that $D^\varepsilon(A_\varepsilon)$ is a categorical deformation of $D(A)$. We know from \cite[Theorem 6.7]{Nder} that there is a recollement
\[\begin{tikzcd}
	D(A) & {D^\varepsilon(A_\varepsilon)} & D(A).
	\arrow["i", hook, from=1-1, to=1-2]
	\arrow["\Img t", from=1-2, to=1-3]
	\arrow["\Ker t", curve={height=-22pt}, from=1-2, to=1-1]
	\arrow["\Coker t"', curve={height=22pt}, from=1-2, to=1-1]
	\arrow["G"', curve={height=22pt}, from=1-3, to=1-2]
	\arrow[ curve={height=-22pt}, from=1-3, to=1-2]
\end{tikzcd}\] 

For concreteness, we will consider the dg enhancement of $D^\varepsilon(A_\varepsilon)$ given by the $A_\varepsilon$-modules which are cofibrant according to the model structure from \cite[Section 8]{Nder}; with this enhancement, the functors $i$, $\Coker t$, $\Ker t$, $\Img t$ are represented by honest dg functors while $G$ is given by a quasi-functor. The recollement induces a natural transformation $\Ker t \tow{\alpha} \Coker t$ which projects the subobject $\Ker t_M\subseteq M$ into the quotient $\Coker t_M \cong M/tM$. Moreover, there are natural transformations $\Img t\tow{\delta_1} \Ker t$ and $\Coker t\tow{\delta_2}\Img t$ induced respectively by the inclusion $tM\hookrightarrow \Ker t_M$ and the multiplication $M/tM \tow{t} tM$. For any $A_\varepsilon$-module $M$, the sequence \[
0\to tM \tow{\delta_1} \Ker t_M \tow{\alpha} \Coker t_M \tow{\delta_2} \Img tM \to 0
\] is exact. Splitting this yields short exact sequences  \[
0\to tM\tow{\delta_1} \Ker t_M \to \frac{\Ker t_M}{tM}\to 0
\text{ and }
0\to \frac{\Ker t_M}{tM}\to \Coker t_M \tow{\delta_2} tM \to 0
\] which are natural in $M$, and have the property that the composition \[\Ker t_M \to \frac{\Ker t_M}{tM}\to \Coker t_M\] coincides with $\alpha$. Since semifree $A$-modules are in particular projective as graded $A$-modules, the functor $\Hm{A}(N,-)$ is exact and we have two short exact sequences of $D^\varepsilon(A_\varepsilon)$-$D(A)$ bimodules  \[
0\to \Hm{A}(-,\Img t)\tow{\delta_1} \Hm{A}(-,\Ker t) \to \Hm{A}(-,\frac{\Ker t}{\Img t})\to 0  \]and \[ 0\to \Hm{A}(-,\frac{\Ker t}{\Img t})\to \Hm{A}(-,\Coker t) \tow{\delta_2} \Hm{A}(-,\Img t) \to 0
\] with the property that the composition \[\Hm{A}(-,\Ker t) \to \Hm{A}(-,\frac{\Ker t}{\Img t})\to \Hm{A}(-,\Coker t)\] coincides with $\alpha$. Since short exact sequences give triangles in the derived category, we are done. Note that the role of the functor $C$ from Definition \ref{defdef} is taken by the dg functor $\frac{\Ker t}{\Img t}$.

\begin{Rem}
    It follows from \cite[Proposition 7.10]{Nder} that the subcategory $D^{\operatorname{si}}(A_\varepsilon)\subseteq D^\varepsilon(A_\varepsilon)$ given by the semiderived category (\cite{Positselski_2018}) coincides with the kernel of the functor $C$ -- the cone of the natural transformation $\delta_1$. This observation allows to define, for any categorical deformation $\T_\varepsilon$, a full subcategory $\T^{\operatorname{si}}\subseteq\T_\varepsilon$ given by the kernel of the functor $C$ appearing in the deformation data. This subcategory measures -- at least in the compactly generated case -- how close the deformation is to admit a classical, or uncurved, representative. Specifically, in the case of an algebra deformation, it was observed in \cite{Nder} that as soon an object $M\in D^{\operatorname{si}}(A_\varepsilon)$ exists whose reduction $M/tM$ is a compact generator of $D(A)$, the deformation $A_\varepsilon$ is equivalent to an uncurved one. We expect a similar behavior to also appear in the general case.

    On the other hand, we were not able to find any characterization for the quotient $D(A_\varepsilon)$ of $D^\varepsilon(A_\varepsilon)$, a priori only defined in the case where $A_\varepsilon$ is uncurved, which only makes use of the deformation data. This remains a relevant question; for some related discussions, see Appendix \ref{appendix}.
\end{Rem}
\begin{Ex}
    Recall the example of the graded field discussed in \cite[Example 4.2]{Nder}; let $A$ be the algebra $A=k[u, u^{-1}]$ with deg $u=2$, and let $A_\varepsilon$ be the deformation induced by the Hochschild cocycle $u\in Z^2\mathbf{C}(A)$, i.e. the cdg algebra $(k[u, u^{-1}, t], 0, tu)$. It was shown in \cite{Nder} that the semiorthogonal decomposition of $D^\varepsilon(A_\varepsilon)$ is actually orthogonal. This is consistent with our theory: since the Hochschild cocycle $u$ is an isomorphism, the same holds for the cocycle in $\chi_A(u)\in Z^2\mathbf{C}(D(A))$. Hence, its cone -- the gluing functor -- is the zero object.
\end{Ex}
\subsection{Enhancing the left adjoint}\label{leftadj}

As mentioned before, the functor $G$ is not induced by a dg functor. It follows essentially by definition that it does have an enhancement as a quasi-functor, but this is impractical: concretely, the natural formula must be resolved in order to do concrete computations and this creates complications. Instead, we will see that $G$ has a very natural incarnation as an $A_\infty$-functor. We refer the reader to \cite{Ainffunct, COS2} for details regarding the switch between different models for the homotopy category of dg categories. Note that we use for $A_\infty$-functors the same notion of adjunction  we used for quasi-functors; this is also equivalent (at the homotopy level) to the notion of adjunction between $\infty$-functors, see \cite{dyckerhoffsphericalrelative}.

We will consider a variant of the construction from \cite{Nder} of one of the compact generators of the $1$-derived category. Recall that the curvature of $A_\varepsilon$ is of the form $t\mu_0 $. We have a well-defined degree $2$ closed $A$-module map $
A \tow{t\mu_0} A_\varepsilon.
$  We define the $A_\varepsilon$-module $\Gamma$ as the ``two sided cone'' of the diagram 
\[\begin{tikzcd}
	A & {A_\varepsilon}
	\arrow["-t\mu_0", shift left, from=1-1, to=1-2]
	\arrow["\pi", shift left, from=1-2, to=1-1]
\end{tikzcd}\] where $\pi$ is the natural projection. Explicitly, $\Gamma$ is given by the graded module $A_\varepsilon\oplus A[-1]$ with differential given by \[
d_\Gamma(a_\varepsilon, b)=(d_{A_\varepsilon}a_\varepsilon - t\mu_0 b, d_{A[-1]}b+a), 
\] where we have denoted the action of the map $\pi$ by removing the subscript $\varepsilon$.
\begin{Prop}
    The module $\Gamma$ is a cdg $A_\varepsilon$-module.
\end{Prop}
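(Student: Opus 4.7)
The plan is to verify the three conditions defining a right cdg $A_\varepsilon$-module: that $\Gamma = A_\varepsilon\oplus A[-1]$ carries a right graded $A_\varepsilon^{\#}$-module structure, that $d_\Gamma$ is a degree-one graded derivation, and that $d_\Gamma^2(m) = -m\cdot(t\mu_0)$.

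For the module structure, I would let $A_\varepsilon^{\#}$ act on the first summand by right multiplication and on $A[-1]$ via the projection $\pi\colon A_\varepsilon\to A$ followed by right multiplication in $A$; this is well-defined because $A$ is naturally an $A_\varepsilon$-module via $\pi$ (i.e.\ $t$ acts as zero on $A$). Checking that $d_\Gamma$ is a graded derivation is a direct expansion: comparing $d_\Gamma((a_\varepsilon,b)\cdot x)$ with $d_\Gamma(a_\varepsilon,b)\cdot x + (-1)^{|(a_\varepsilon,b)|}(a_\varepsilon,b)\cdot d_{A_\varepsilon}(x)$, the two sides agree termwise by the Leibniz rules for $d_{A_\varepsilon}$ and $d_A$ together with the compatibility $\pi\circ d_{A_\varepsilon} = d_A\circ\pi$ (which expresses that $\pi$ is a morphism of graded algebras commuting strictly with the predifferentials — note that $A$ carries zero curvature as a dg algebra).

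The main step, and the main obstacle, is verifying the curved square relation. Note first that the right-hand side $-(a_\varepsilon,b)\cdot t\mu_0$ equals $(-a_\varepsilon t\mu_0,\,0)$ because $\pi(t\mu_0)=0$. Applying $d_\Gamma$ twice to $(a_\varepsilon,b)$ produces four terms in each coordinate. In the first coordinate I would use (i) the curved identity $d_{A_\varepsilon}^2(a_\varepsilon) = [t\mu_0,a_\varepsilon]$, (ii) the closedness $d_{A_\varepsilon}(t\mu_0)=0$, and (iii) the relation $t^2=0$ (so that $t\mu_0\cdot \pi(a_\varepsilon) = t\mu_0\cdot a_\varepsilon$, the difference lying in $t\cdot tA = 0$) to show that the cross-terms cancel, leaving only $-a_\varepsilon t\mu_0$. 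In the second coordinate every contribution vanishes via $d_A^2=0$, $\pi(t)=0$, and again $\pi\circ d_{A_\varepsilon} = d_A\circ\pi$.

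The delicate part will be careful bookkeeping of Koszul signs — especially the sign introduced by the shift in $d_{A[-1]}$ and the signs in the graded commutator $[t\mu_0,-]$ — but no ingredient beyond these standard identities is needed, so once the sign conventions of \cite{Nder, MoritacDef} are in hand the identity should fall out immediately.
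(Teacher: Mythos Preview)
Your verification is correct and follows exactly the route the paper intends: the paper's own proof is simply ``Straightforward, see \cite[Proposition 4.2]{Nder}'', so you have supplied the direct check it omits. The key identities you isolate --- $d_{A_\varepsilon}(t\mu_0)=0$, $d_{A_\varepsilon}^2=[t\mu_0,-]$, $t^2=0$, and $\pi\circ d_{A_\varepsilon}=d_A\circ\pi$ --- are precisely the ones needed, and your remark that the only subtlety is Koszul-sign bookkeeping is accurate.
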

\begin{proof}
    Straightforward, see \cite[Proposition 4.2]{Nder}.
\end{proof}
Since $t\Gamma\cong A$, there is a natural dg algebra map \[
E=\Hm{A_\varepsilon}(\Gamma, \Gamma)\to \Hm{A}(t\Gamma, t\Gamma)\cong A
\] which is easily seen to be a surjective quasi-isomorphism (see \cite[Proposition 6.5]{Nder}). Hence, it admits an $A_\infty$-inverse $e\colon A \to E$. Let us describe explicitly this inverse, remembering that the $A_\infty$-algebra morphism $e$ is given by a collection \[
g_i\in \Hm{k}(A^{\otimes i}, E)
\] of degree $1-i$ satisfying the $A_\infty$-identities\[
    \sum_{n=r+s+t} (-1)^{r+st}g_u(\id^{\otimes r}\otimes m_s \otimes \id^{\otimes r})=\sum_{i_1+\ldots i_r=n}(-1)^s m_r(g_{i_1}\otimes g_{i_2}\otimes \ldots  \otimes g_{i_r})
\] where $m_i$ represent the operations of $A$ and $E$, and  $s=\sum_j (r-j)(i_j-1)$. At the graded level, the algebra $E$ is the matrix algebra \[
\begin{bmatrix}
\Hm{A_\varepsilon}(A_\varepsilon, A_\varepsilon) & \Hm{A_\varepsilon}(A[-1], A_\varepsilon) \\
\Hm{A_\varepsilon}(A_\varepsilon, A[-1]) & \Hm{A_\varepsilon}(A,A)
\end{bmatrix}\cong\begin{bmatrix}
A_\varepsilon & A[1] \\
A[-1] & A
\end{bmatrix}
\] with differential \[
d_E\begin{bmatrix}
x_\varepsilon & y \\
z & w
\end{bmatrix} =
\begin{bmatrix}
d_\varepsilon x_\varepsilon & d y \\
dz & dw
\end{bmatrix}+ \begin{bmatrix}
-t(y+\mu_0z) & \mu_0w-x\mu_0 \\
x-w & 0
\end{bmatrix}
\] for degree $0$ elements $x_\varepsilon, y, z, w$ -- otherwise the Koszul sign rule applies. The component $g_1$ carries the element $a\in A$ to the matrix  \[
\begin{bmatrix}
a & \mu_1(A) \\
0 & a
\end{bmatrix};
\]this does commute with the differentials, but not with the multiplications -- since the map $A\tow{1\to 1}A_\varepsilon$ is not a map of associative algebras, neither is $g_1$. A higher component $
g_2\colon A\otimes_k A \to E
$ is therefore needed. We define \[
g_2(a, b)= \begin{bmatrix}
0 & \mu_2(a,b) \\
0 & 0
\end{bmatrix}.
\]
\begin{Prop}
The map $A\tow{g}E$ is a quasi-isomorphism of $A_\infty$-algebras.
\end{Prop}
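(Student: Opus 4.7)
The plan is to verify in two steps: first, that $g = (g_1, g_2, 0, 0, \ldots)$ satisfies the $A_\infty$-algebra morphism equations, and second, that its linear part induces an isomorphism on cohomology.

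For the $A_\infty$-morphism identities, I would expand the relation spelled out earlier in the section order by order in $n$. At order $n=1$, the equation $d_E g_1(a) = g_1(d_A a)$ reduces, by applying the explicit formula for $d_E$ to the matrix $g_1(a)$, to the identity $d\mu_1(a) - \mu_1(d a) = [a, \mu_0]$, which is the component of the Hochschild cocycle condition on $\mu_A$ involving $\mu_0$ and $\mu_1$. At order $n=2$, the relation $g_1(ab) - g_1(a) g_1(b) = d_E g_2(a,b) \pm g_2(d_A a, b) \pm g_2(a, d_A b)$ unpacks, using the matrix product $g_1(a) g_1(b)$ whose $(1,1)$-entry picks up the deformation term $t\mu_2(a,b)$ coming from multiplication in $A_\varepsilon$, and the explicit form of $d_E$ on the purely off-diagonal matrix $g_2(a,b)$, to another piece of the cocycle condition relating $\mu_1$ and $\mu_2$. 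At order $n=3$, since $g_i = 0$ for $i \geq 3$, the equation collapses to a Leibniz-type identity of the shape $g_2(ab, c) \pm g_2(a, bc) = g_1(a)\, g_2(b, c) \pm g_2(a, b)\, g_1(c)$ (up to signs), which in the matrix algebra $E$ is precisely the bar-part of the Hochschild cocycle condition on $\mu_2$. For $n \geq 4$, both sides of the $A_\infty$-relation vanish identically: every summand either contains a factor $g_i$ with $i \geq 3$ or involves the matrix product of two strictly upper-triangular contributions from $g_2$, and such products are zero.

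For the quasi-isomorphism statement, I use the projection $p \colon E \to A$ sending a matrix to its $(2,2)$-entry; as noted immediately before the construction of $g$, this is a strict dg algebra map and a quasi-isomorphism. By inspection of the formula for $g_1$ one has $p \circ g_1 = \id_A$, so $g_1$ is a one-sided inverse to a quasi-isomorphism at the level of underlying chain complexes, hence itself a quasi-isomorphism. Since $g_1$ is the linear part of $g$, this completes the proof.

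The main obstacle will be tracking the signs coming from Koszul conventions in the $A_\infty$-relations and matching them with the signs in the components of the Maurer-Cartan equation $(d+b)\mu_A = 0$ in $\mathbf{C}^\bullet(A)$; conceptually the entire verification is a direct translation of this Maurer-Cartan equation for $\mu_A$, but the matrix algebra structure of $E$ together with the twisted differential $d_E$ (in particular the curvature correction involving $t\mu_0$) make the explicit matching somewhat delicate.
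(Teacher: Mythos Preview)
Your proposal is correct and follows essentially the same route as the paper: verify the $A_\infty$-morphism equations order by order (they collapse to four conditions since $g_i=0$ for $i>2$), identify each with a component of the Hochschild cocycle equation $d_H(\mu_A)=0$, observe that the $n\geq 4$ case is automatic because $g_2$ is a square-zero matrix, and deduce the quasi-isomorphism from $g_1$ being a one-sided inverse to the surjective quasi-isomorphism $E\to A$. The only cosmetic difference is that the paper treats the quasi-isomorphism claim first and lists the four equations explicitly before matching them to the cocycle condition.
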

\begin{proof}
    By definition, $g$ is a quasi-isomorphism if and only if $g_1$ is; but $g_1$ is a right inverse to the quasi-isomorphism $E\to A$, so the only thing to check is that $g$ is indeed an $A_\infty$-algebra morphism. Since $A$ and $E$ are dg algebras and $g_i=0$ for $i>2$, the $A_\infty$ equations specialize to \begin{align}
        &d_Eg_1(a)=g_1d_A(a) \label{coc1}\\
        &g_1(ab)-g_1(a)g_1(b)=d_E g_2(a, b)+g_2(d_Aa, b)+g_2(a, d_Ab)\label{coc2}\\
        &  \label{coc3}      g_2(a,b)g_1(c)-g_1(a)g_2(a,b)+g_2(ab,c)-g_2(a, bc)=0
        \\
        & \label{coc4}
        g_2(a,b)g_2(b,c)=0
        \end{align}
    for degree $0$ elements $a,b,c\in A$ -- otherwise the Koszul sign rule must be applied.
    Writing down the explicit formulas, one sees that equation \eqref{coc1} corresponds to the condition $d\mu_1+\mu_1d=[\mu_0,-]$, equation \eqref{coc2} to \[
    a\mu_1(b)-\mu_1(ab)+\mu_1(a)b=d\mu_2(a,b)-\mu_2(a, db)-\mu_2(a, db)
    \] and equation \eqref{coc3} to \[
    a\mu_2(b,c)-\mu_2(ab,c)+\mu_2(a, bc)-\mu_2(a, b)c.
    \]
    Equation \eqref{coc4} is automatically satisfied, since $g_2$ is a square-zero matrix. There are precisely (all but one of) the components of the equation $d_H(\mu)=0$, where $d_H$ is the total Hochschild differential. Note how the only condition that was not needed in this proof -- that is, $d_A \mu_0=0$ -- was implicitly used in showing that $\Gamma$ is a cdg $A_\varepsilon$-module.
\end{proof}
Then, seeing $A$ as a one-object dg category, we have a well-defined $A_\infty$-functor \[
A\tow{g} D^\varepsilon(A_\varepsilon)
\] which sends the only object $A$ to $\Gamma\in D^\varepsilon(A_\varepsilon)$. 

\subsubsection{Cones of $A_\infty$-morphisms}
Let $F, G\colon A\to D(A)$ be (strict) $A_\infty$-functors and $\eta\colon F\to G$ an $A_\infty$-natural transformation. In components, $F$ is given by a certain object $FA\in D(A)$ together with an $A_\infty$-algebra map \[
A\tow{F} \Hm{A}(FA, FA)
\]  which is given explicitly by components \[
F_i\colon A^{\otimes i} \to \Hm{A}(FA, FA)
\] of appropriate degree, and the same for $G$. Similarly, the natural transformation $\eta$ is given by components \[
\eta_i\colon A^{\otimes i} \to \Hm{A}(FA, GA).
\] The category of $A_\infty$-functors $A\to D(A)$ is pretriangulated, and we can give an explicit description of cones in this category, following \cite{lefèvrehasegawa2003surlesainfinicategories}. At the object level, the $A_\infty$-functor $\operatorname{Cone}(\eta)(A)$ is defined as \[
\operatorname{Cone}(\eta)(A)=\operatorname{Cone}(\eta_0)\cong_{gr} GA\oplus FA[1].
\] The $A_\infty$-morphism \[
A\to \Hm{A}(\operatorname{Cone}(\eta_0), \operatorname{Cone}(\eta_0))\cong\Hm{A}(GA\oplus FA[1], GA\oplus FA[1])
\] is given in components by  \[
   \begin{split}
       \begin{bmatrix}
F_i & \eta_i \\
0 & G_i
\end{bmatrix}\colon A^{\otimes i} &\to \Hm{A}(GA\oplus FA[1], GA\oplus FA[1]).
   \end{split}
    \]

    Any such morphism defines a canonical short exact sequence of $A_\infty$-functors \[
0 \to GA \to \operatorname{Cone}(\eta) \to FA[1]\to 0
\] which corresponds to the triangle \[
FA \tow{\eta} GA \to \operatorname{Cone}(\eta) \to FA[1] 
\] in the homotopy category.

\begin{Lem}\label{cones}
    The composition \[
    A \tow{g} D^\varepsilon(A_\varepsilon)\tow{\Ker t} D(A)
    \] is isomorphic to the cone of $\mu_A$, when seen as a closed morphism $\operatorname{inc}[-2]\to \operatorname{inc}$. Moreover the composition \[
    A \tow{g} D^\varepsilon(A_\varepsilon)\tow{\Coker t} D(A)
    \] coincides with the cone of the identity natural transformation of $\operatorname{inc}[-1]$.
\end{Lem}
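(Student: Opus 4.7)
The proof will compute both compositions explicitly at the level of $A_\infty$-functors and match them term-by-term with the explicit cone construction recalled at the end of Section \ref{secextend}. The argument proceeds in two steps: first identify the underlying objects of $D(A)$, then verify the $A_\infty$-morphism structure.

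For the object-level identification, recall that $\Gamma = A_\varepsilon \oplus A[-1]$ as a graded $A_\varepsilon$-module, with the $A_\varepsilon$-module structure on $A[-1]$ factoring through the projection $A_\varepsilon \to A$. Multiplication by $t$ therefore annihilates the $A[-1]$ summand and restricts to standard multiplication by $t$ on $A_\varepsilon$. Hence $\Ker t(\Gamma) = tA \oplus A[-1]$ and $\Coker t(\Gamma) = A \oplus A[-1]$. Under the canonical identification $tA \cong A$, a direct inspection of the formula for $d_\Gamma$ (using that $d_{A_\varepsilon}(ta') = t\, da'$) yields the induced differentials $(a',b) \mapsto (da' - \mu_0 b,\, db)$ on $\Ker t(\Gamma)$ and $(a,b) \mapsto (da,\, db + a)$ on $\Coker t(\Gamma)$; up to signs on the $[-1]$ shift, these are the standard cone differentials of $\operatorname{Cone}(\mu_0\colon A[-2] \to A)$ and $\operatorname{Cone}(\operatorname{id}\colon A[-1] \to A[-1])$ respectively.

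For the second step, recall that by the formula given in Section \ref{secextend} the cone of an $A_\infty$-natural transformation $\eta\colon F \to G$ has $A_\infty$-components $\begin{bmatrix} F_i & \eta_i \\ 0 & G_i \end{bmatrix}$. Applied to $\eta = \mu_A\colon \operatorname{inc}[-2] \to \operatorname{inc}$, since $\operatorname{inc}$ and $\operatorname{inc}[-2]$ are strict dg functors their higher $A_\infty$-components vanish, and since $A_\varepsilon$ is a cdg deformation only $\mu_0, \mu_1, \mu_2$ are nonzero; this produces exactly the matrices $\begin{bmatrix} a & \mu_1(a) \\ 0 & a \end{bmatrix}$ and $\begin{bmatrix} 0 & \mu_2(a,b) \\ 0 & 0 \end{bmatrix}$. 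The components $g_1, g_2$ of the $A_\infty$-morphism $g$ have precisely this form. Applying the dg functor $\Ker t$ preserves these matrices verbatim: the off-diagonal entries, which under the identification $\Hm{A_\varepsilon}(A[-1], A_\varepsilon) \cong tA[1]$ represent multiplication by $t\mu_i(-) \in tA$, survive intact, yielding the claimed isomorphism with the cone of $\mu_A$. Applying $\Coker t$ instead annihilates the $tA$-valued off-diagonal entries, leaving only the diagonal matrices $\begin{bmatrix} a & 0 \\ 0 & a \end{bmatrix}$ in degree one and zero in higher degrees, which is exactly the cone of the identity natural transformation on $\operatorname{inc}[-1]$.

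The main bookkeeping point, and the subtle step of the argument, is the identification of the off-diagonal matrix entries with elements of $tA$ rather than $A_\varepsilon$ itself: this is forced by the description of $E$ as a matrix algebra given just before the statement, and it precisely captures the difference between the two compositions; the off-diagonal $\mu_i$-terms are what make the extension nontrivial under $\Ker t$ and what are killed under $\Coker t$. Once this is unwound the matching of matrices is immediate, and naturality in $a \in A$ holds on the nose since both sides are defined by the same explicit matrix formulas.
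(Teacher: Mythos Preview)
Your proof is correct and follows essentially the same approach as the paper: compute $\Ker t_\Gamma$ and $\Coker t_\Gamma$ explicitly as graded $A$-modules with their induced differentials, then verify that the $A_\infty$-structure maps inherited from $g$ coincide with the matrix components prescribed by the cone construction. Your treatment is in fact slightly more detailed than the paper's --- you spell out the $\Coker t$ case (which the paper dismisses as ``straightforward using the same argument'') and you make explicit the key mechanism, namely that the off-diagonal entries of $g_1, g_2$ land in $tA \subset A_\varepsilon$ under the identification $\Hm{A_\varepsilon}(A[-1], A_\varepsilon) \cong tA[1]$, so they survive under $\Ker t$ but are killed under $\Coker t$.
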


\begin{proof}
    For the first statement one observes that, at the graded level, $\Ker t_\Gamma=A\oplus A[-1]$, with differential induced by $\Gamma$. Under the isomorphism $\Ker A_\varepsilon\cong A$, we see that \[
    d_{\Ker t_\Gamma}(a, b)=(da, db+a\mu_0)
    \] which shows that $\Ker t_\Gamma$ corresponds to the cone of $\mu_0$. Hence, as an object of $D(A)$, $\Ker t_\Gamma$ coincides with the cone of $\mu$. To conclude, we have to show that this also holds at the level of the actions, i.e. that the two possible $A_\infty$-algebra maps \[
    A\to\Hm{A}(\Ker t_\Gamma, \Ker t_\Gamma)
    \] -- one defined by seeing $\Ker t_\Gamma$ as the cone of $\mu_A$, the other induced by $e$ -- coincide. This is however immediate, since they are both given in components by \[
   \begin{split}
       A &\to \Hm{A}(\Ker t_\Gamma, \Ker t_\Gamma)\\
       a &\to  \begin{bmatrix}
a & \mu_1(a) \\
0 & a
\end{bmatrix}
   \end{split}
    \]
    and 
    \[
   \begin{split}
       A\otimes A &\to \Hm{A}(\Ker t_\Gamma, \Ker t_\Gamma)\\
       a\otimes b &\to  \begin{bmatrix}
0 & \mu_2(a, b) \\
0 & 0
\end{bmatrix}.
   \end{split}
    \]
    The second statement is straightforward using the same argument.
\end{proof}
\subsubsection{The adjunction isomorphism}
Using Proposition \ref{propextend}, we can extend the functor $g$ to an $A_\infty$-functor \[
G_\infty:= \chi_A(g)\colon D(A)\to D^\varepsilon(A_\varepsilon).
\]

Given $M=(\{n_i\}, f )\in \operatorname{Tw}(A)$, we have a natural isomorphism \[
tG_\infty M \cong (\oplus_i t\Gamma[n_i], g_1(f)_{|t\Gamma})\cong (\oplus_i A[n_i], f)\cong M
\] We therefore have a natural isomorphism $\id_{D(A)}\to \Img t\circ G_\infty$. 
This induces a natural transformation \[
\Hm{A_\varepsilon}(G_\infty M,  N)\to \Hm{A}(tG_\infty M, t N)\to \Hm{A}(M, t N).
\]
We can show that this emerges as the unit of an adjunction between $\Img t$ and $G_\infty$.
\begin{Lem}
    The natural morphism \begin{equation}\label{unitadj}
    \Hm{A_\varepsilon}(G_\infty M,  N)\cong \Hm{A}(M, tN)
    \end{equation} is an isomorphism in $D(k)$.
\end{Lem}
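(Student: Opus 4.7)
The plan is to establish \eqref{unitadj} by a dévissage argument: reduce to the case $M = A$, which is a compact generator of $D(A)$, and then verify the base case by a direct computation with the explicit model for $\Gamma$.

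For the reduction, I would argue that both sides of \eqref{unitadj} are contravariant cohomological functors of $M$ which send coproducts to products. The right hand side $\Hm{A}(M, tN)$ has these properties formally. For the left hand side, one uses that $G_\infty = \chi_A(g)$ is cocontinuous by Proposition \ref{propextend}, hence preserves triangles and coproducts, and composing with $\Hm{A_\varepsilon}(-, N)$ then yields a cohomological, product-preserving functor of $M$. Since $A$ generates $D(A)$ as a localizing subcategory, it suffices to show that the given natural transformation is a quasi-isomorphism at $M = A$.

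For the base case $G_\infty A = \Gamma$, so the map becomes the restriction
\[
\Hm{A_\varepsilon}(\Gamma, N) \longrightarrow \Hm{A}(t\Gamma, tN) \cong \Hm{A}(A, tN) = tN.
\]
Using the description $\Gamma \cong_{gr} A_\varepsilon \oplus A[-1]$, an $A_\varepsilon$-linear morphism $f\colon \Gamma \to N$ of degree $n$ is determined by a pair
\[
(n_0, n_1) = (f(1_{A_\varepsilon}, 0),\, f(0, 1_A)) \;\in\; N^n \oplus (\Ker t_N)^{n+1},
\]
the Ker-$t$ constraint on $n_1$ coming from $1_A \cdot t = 0$ in $\Gamma$. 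Unwinding $d_\Gamma$ gives
\[
d(n_0, n_1) \;=\; \bigl(d n_0 - (-1)^n n_1,\; d n_1 + (-1)^n t\mu_0 n_0\bigr),
\]
and the restriction map reads $(n_0, n_1) \mapsto t n_0$. The kernel $K = \{(n_0, n_1) : t n_0 = 0\}$ is then contractible: on $K$ the curvature contribution $t\mu_0 n_0$ vanishes, the differential simplifies to $(n_0, n_1) \mapsto (d n_0 - (-1)^n n_1,\, d n_1)$, and $h(n_0, n_1) = (0,\, (-1)^n n_0)$ is a contracting homotopy. The quotient $\Hm{A_\varepsilon}(\Gamma, N)/K$ is identified with $N / \Ker t_N \xrightarrow{\sim} tN$ via $\bar n \mapsto t n$, which agrees with the restriction map; this yields the desired quasi-isomorphism at $M = A$.

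The step most likely to require care is the dévissage: one must verify that \eqref{unitadj} is genuinely a morphism of cohomological, product-preserving functors of $M$ in the chosen enhancement of $D^\varepsilon(A_\varepsilon)$ by cofibrant objects, so that all Hom complexes compute derived Hom and $G_\infty M$ is cofibrant for every $M$. Granted this, the computation at the generator $A$ via the contracting homotopy on $K$ is routine.
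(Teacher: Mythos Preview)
Your proof is correct and follows essentially the same approach as the paper: both reduce to the generator $M=A$ by a localizing-subcategory argument and then verify the base case directly. The only difference is in how the base case is handled. The paper identifies the kernel of the restriction map as $\Hm{A_\varepsilon}(\Gamma,\Ker t_N)\cong\Hm{A}(Q\Gamma,\Ker t_N)$ via the $(Q,i)$ adjunction, and concludes by observing that $Q\Gamma=\Coker t_\Gamma$ is contractible (this having just been shown, in Lemma~\ref{cones}, to be the cone of the identity of $\operatorname{inc}[-1]$). Your explicit contracting homotopy on $K$ is precisely this fact unpacked by hand, so the two arguments coincide in substance; the paper's version is slightly slicker only because it recycles the preceding lemma rather than recomputing.
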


\begin{proof}
Consider the subcategory of $D(A)$ given by the modules for which \eqref{unitadj} is an isomorphism. This is closed under shifts and cones, as well as under coproducts since $G_\infty$ preserves them. Hence it is enough to show that it contains the generator $A$; Since $G_\infty A=\Gamma$, we want to show that the map \[
\begin{split}
    \Hm{A_\varepsilon}(\Gamma, M)\to \Hm{A}(t\Gamma, tM)\cong \Hm{A}(A, tM)\cong tM
\end{split}
\] is a quasi-isomorphism. It is immediate to check that it is surjective, and its kernel is given by \[
\Hm{A_\varepsilon}(\Gamma, \Ker t_M)\cong \Hm{A}(Q\Gamma, \Ker t_M)
\] which is acyclic since $Q\Gamma$ is contractible.
\end{proof}
Hence by \cite[Proposition 1.2.7]{dyckerhoffsphericalrelative} the functor $G_\infty$ is left adjoint to $\Img t$.

We'll also need the ``large'' version of Lemma \ref{cones}.
\begin{Lem}
    The composition \[
    D(A) \tow{G_\infty} D^\varepsilon(A_\varepsilon)\tow{\Ker t} D(A)
    \] is isomorphic to the cone of $\chi_A(\mu_A)$, when seen as a closed morphism $\id_{D(A)}[-2]\to \id_{D(A)}$. Moreover the composition \[
    D(A) \tow{G_\infty} D^\varepsilon(A_\varepsilon)\tow{\Coker t} D(A)
    \] coincides with the cone of the identity natural transformation of $\id_{D(A)}[-1]$.
\end{Lem}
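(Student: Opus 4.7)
The plan is to bootstrap from Lemma \ref{cones} using two facts already established in the excerpt: cocontinuous $A_\infty$-functors $D(A) \to \mathcal{B}$ are determined up to isomorphism by their restriction along the Yoneda embedding $A \hookrightarrow D(A)$ (the quasi-equivalence from \cite[Proposition 3.27]{WellGenRamos}), and $\chi_A$ is a dg functor preserving triangles (Proposition \ref{propextend}). The idea is that every functor appearing in the statement is a cocontinuous $A_\infty$-functor $D(A) \to D(A)$, so it suffices to check the identifications after restricting to the generator $A$, where they are exactly the content of Lemma \ref{cones}.

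For the first claim, I would begin by noting that $\Ker t$ is a dg functor on $D^\varepsilon(A_\varepsilon)$, so post-composition with it preserves both cones and small coproducts. Hence the composite $\Ker t \circ G_\infty = \Ker t \circ \chi_A(g)$ is a cocontinuous $A_\infty$-functor $D(A) \to D(A)$ whose restriction along the Yoneda embedding is $\Ker t \circ g$. By Lemma \ref{cones}, this restriction is isomorphic to the cone of $\mu_A$ viewed as a closed morphism $\operatorname{inc}[-2] \to \operatorname{inc}$ in $\Fun_{A_\infty}(A, D(A))$. Applying $\chi_A$ to this isomorphism, and using that $\chi_A(\operatorname{inc}) = \id_{D(A)}$ together with the preservation of triangles, yields
\[
\Ker t \circ G_\infty \;\cong\; \chi_A(\operatorname{Cone}(\mu_A)) \;\cong\; \operatorname{Cone}(\chi_A(\mu_A)),
\]
which is the first assertion, with $\chi_A(\mu_A)$ identified as the asserted closed morphism $\id_{D(A)}[-2] \to \id_{D(A)}$.

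The second claim follows by an identical argument with $\Coker t$ in place of $\Ker t$: Lemma \ref{cones} identifies $\Coker t \circ g$ with the cone of the identity of $\operatorname{inc}[-1]$, and applying $\chi_A$ produces the cone of the identity of $\id_{D(A)}[-1]$.

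The one step that will require a bit of care is the implicit identification of $\Ker t \circ \chi_A(g)$ with \emph{the} cocontinuous extension of $\Ker t \circ g$ (and similarly for $\Coker t$); this is needed to transfer Lemma \ref{cones} across $\chi_A$. Both functors are cocontinuous $A_\infty$-functors out of $D(A)$ with the same restriction to $A$, so they are isomorphic by the uniqueness clause in \cite[Proposition 3.27]{WellGenRamos}. This is where one genuinely uses the full quasi-equivalence and not just the existence of the extension.
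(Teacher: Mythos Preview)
Your proposal is correct and follows the same approach as the paper, which simply says ``This is a combination of Lemma \ref{cones} and the properties of the functor $\chi_A$.'' You have filled in precisely what that combination means: restrict to $A$, apply Lemma \ref{cones}, and then use the quasi-equivalence between $\Fun_{A_\infty}^c(D(A),D(A))$ and $\Fun_{A_\infty}(A,D(A))$ together with the fact that $\chi_A$ preserves triangles to transport the identification back.
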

\begin{proof}
    This is a combination of Lemma \ref{cones} and the properties of the functor $\chi_A$.
\end{proof}
\begin{proof}[Proof of Theorem \ref{classisclass}]
First of all, observe that the unit map $\Img t\circ G_\infty \to \id_{D(A)}$ is the identity. Thus by definition, the class $\mu_{D(A)}(D^\varepsilon(A_\varepsilon))$ is obtained by composing with $G_\infty$ the morphism\[
\Img t \to\Img t[2]
\] obtained by composing the boundary maps induced by the two short exact sequences 
\[
0 \to \frac{\Ker t}{\Img t} \to \Coker t \tow{\delta_2} \Img t \to 0
\] and \[
0 \to \Img t \tow{\delta_1} \Ker t \to \frac{\Ker t}{\Img t} \to 0.
\] The comparison between quasi-functors and $A_\infty$-functors preserves dg functors and dg natural transformations between them, hence the class $\mu_{D(A)}(D^\varepsilon(A_\varepsilon))$ is given, in the $A_\infty$ setting, by the composition of the morphism \[
\id_{A}\to \frac{\Ker t}{\Img t}\circ G_\infty [1]
\] induced by the short exact sequence \begin{equation}\label{sesainf1}
0 \to  \frac{\Ker t}{\Img t} \circ G_\infty \to \Coker t \circ G_\infty \tow{\delta_2 G}   \Img t \circ G\cong \id_{A}\to 0
\end{equation}
 with the morphism \[
\frac{\Ker t}{\Img t}\circ G_\infty[1] \to \id_{D(A)}[2] 
\]
induced by the short exact sequence \begin{equation}\label{sesainf2}
0 \to  \id_{D(A)}\cong \Img t \circ G_\infty \tow{\delta_1} \Ker t\circ G_\infty \to \frac{\Ker t}{\Img t} \circ G_\infty    \to 0.
\end{equation}The first observation is that $\frac{\Ker t}{\Img t}\circ G_\infty$ coincides with $\id_{D(A)}[-1]$; moreover by Proposition \ref{propextend} the short exact sequence \eqref{sesainf1} can be read as \[
0 \to \id_{D(A)}[-1]\to \operatorname{Cone}(\id_{\id_{D(A)}[-1]}) \to \id_{D(A)} \to 0
\] from which\footnote{The confusing notation $\id_{\id_{D(A)}[-1]}$ denotes the identity natural transformation between the shifted identity functor and itself.} one can conclude that the boundary map is the identity map $\id_{D(A)}\to \id_{D(A)}$. In the same way, the short exact sequence \eqref{sesainf2} reads \[
0 \to \id_{D(A)}[1]\to \operatorname{Cone}(\chi_A(\mu_A)) \to \id_{D(A)} \to 0
\] from which it follows that the boundary map is precisely $\chi_A(\mu_A)$, hence the claim.
    
\end{proof}
\section{A commutative square of deformations}
Finally, we can show that the map $\mu_{D(A)}$ is compatible with the bijection \[
\nu \colon \cdef_A(\ke)\to \HH^2(A)
\]introduced in \cite{MoritacDef}, yielding the square \eqref{squarec} promised in the introduction (Theorem \ref{mainth}). 
\subsection{Curved Morita deformations and Hochschild cohomology}

We begin by recalling some notions from \cite{MoritacDef}. Recall that if $A, B$ are dg algebras, an $A$-$B$-bimodule $X$ is said to be a Morita equivalence if the induced adjoint pair 
\[\begin{tikzcd}
	{D(A)} & {D(B)}
	\arrow["{-\otimes_A^{\operatorname{L}} X}", shift left, from=1-1, to=1-2]
	\arrow["{\mathbb{R}\Hm{B}(X,-)}", shift left, from=1-2, to=1-1]
\end{tikzcd}\] is an equivalence. For notational simplicity, denote with $F_X$ the equivalence $-\otimes_A^{\operatorname{L}}X$ and with $F_X^{-1}$ its inverse $\mathbb{R}\Hm{B}(X,-)$.

Let $A$ be a dg algebra. A curved Morita deformation of $A$ is a $\ke$-free cdg $\ke$-algebra $B_\varepsilon$ equipped with, setting $B:=B_\varepsilon\otimes_{\ke}k$, a $B$-$A$ Morita bimodule $X$. Two curved deformations $B_\varepsilon$, $C_\varepsilon$ are considered equivalent if there exists an appropriately cofibrant $B_\varepsilon$-$C_\varepsilon$ bimodule $X_\varepsilon$ which commutes with the equivalences of the bases with $A$; the set $\cdef_A(\ke)$ is defined as the set of (first order) equivalence classes of curved Morita deformations of $A$. For a given curved Morita deformation $(B_\varepsilon, X)$ of $A$, by the main result of \cite{KellerInvariance} the bimodule $X$ induces an isomorphism \[
\varphi_X\colon \HH^2(B)\to \HH^2(A).
\]

A map   \[
\nu \colon \cdef_A(\ke)\to \HH^2(A);
\] was constructed in \cite{MoritacDef} in the following way: the $\ke$-free algebra $B_\varepsilon$ defines an cdg deformation of its reduction $B$, and as such defines a class $\mu_B\in \HH^2(B)$; the class $\nu(B_\varepsilon)$ is then defined as $\varphi_X(\mu_B)\in \HH^2(A)$. It was shown in \cite{MoritacDef} that this induces a bijection after passing to the quotient. 

\subsection{From algebras to categories}

We start with the following:
\begin{Prop}
    For any curved Morita deformation $(B_\varepsilon, X)$ of $A$, the 1-derived category $D^\varepsilon(B_\varepsilon)$ is a categorical deformation of $D(A)$. Equivalent Morita deformations yield equivalent categorical deformations, so this assignment gives a well-defined map \[
    D^\varepsilon(-)\colon \cdef_A(\ke)\to \CatDef_{D(A)}(\ke)
    \]
\end{Prop}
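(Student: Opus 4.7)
The plan is to handle the two assertions in turn: first, that the 1-derived category $D^\varepsilon(B_\varepsilon)$ carries the structure of a categorical deformation of $D(A)$; second, that this construction descends to equivalence classes.

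For the first assertion, I would begin by applying Theorem \ref{classisclass} to $B_\varepsilon$ viewed as a cdg deformation of the dg algebra $B = B_\varepsilon\otimes_{\ke}k$. This produces, directly, a categorical deformation of $D(B)$: a recollement with functors $i,\I = \Img t, K = \Ker t, Q = \Coker t, G$ together with a compatible Yoneda 2-extension $0\to \I\tow{\delta_1}K\tow{\alpha}Q\tow{\delta_2}\I\to 0$. To convert this into a categorical deformation of $D(A)$, I would transport the structure along the Morita quasi-equivalence $F_X\colon D(B)\to D(A)$: replace $\I$ by $F_X\I$, $G$ by $GF_X^{-1}$, and $i, Q, K$ analogously, using the (co)units of $F_X\dashv F_X^{-1}$ to obtain the new recollement. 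Since $F_X$ is an equivalence in the homotopy category of quasi-functors, conjugation by $F_X$ preserves all adjunctions, triangles, and natural isomorphisms needed in Definition \ref{defdef}; in particular, the transported 2-extension remains compatible with the new semiorthogonal decomposition.

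For the second assertion, suppose $(B_\varepsilon, X)$ and $(C_\varepsilon, Y)$ are equivalent curved Morita deformations of $A$, witnessed by a cofibrant $B_\varepsilon$-$C_\varepsilon$ bimodule $X_\varepsilon$ whose reduction commutes (up to quasi-isomorphism) with the equivalences $X$ and $Y$ over $A$. I would consider the quasi-functor $-\otimes^{\operatorname{L}}_{B_\varepsilon} X_\varepsilon\colon D^\varepsilon(B_\varepsilon)\to D^\varepsilon(C_\varepsilon)$, which is a quasi-equivalence (the $\ke$-freeness of $X_\varepsilon$ ensures it preserves the $t$-adic filtration strictly, hence descends to the 1-derived category and is an equivalence). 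Because $X_\varepsilon$ is $\ke$-free, tensoring with it intertwines the exact functors $\Img t$, $\Ker t$, $\Coker t$ on both sides up to natural isomorphism induced by the reduction $X_\varepsilon/tX_\varepsilon \simeq (Y^{-1}\otimes X)$; this yields the required compatibility with the two semiorthogonal decompositions over $D(A)$. Compatibility of the Yoneda 2-extensions then follows from naturality: the short exact sequences of modules $0\to tM\to \Ker t_M\to \Ker t_M/tM\to 0$ and $0\to \Ker t_M/tM\to \Coker t_M\to tM\to 0$ are preserved by $-\otimes_{B_\varepsilon}X_\varepsilon$ degreewise and hence produce compatible triangles, forcing the diagram \eqref{eqdefs} to commute.

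The main obstacle will be the book-keeping in this last step: after transporting to $D(A)$ via the Morita equivalences, the class $\iota$ attached to each side a priori lives over its own copy of $D(A)$, and the isomorphisms $\chi_\I$ coming from $X_\varepsilon/tX_\varepsilon$ must be shown to match up with the transport isomorphisms $F_X$ and $F_Y$ coherently. I would handle this by exploiting the characterization in Lemma \ref{recovermap}, which reduces the verification of \eqref{eqdefs} to the naturality of the class $\mu_{D(B)}(D^\varepsilon(B_\varepsilon))$ under the Morita equivalence induced by $X_\varepsilon$; this in turn reduces (by Theorem \ref{classisclass} applied on both sides) to the fact that $\varphi_{X_\varepsilon/tX_\varepsilon}$ matches $\mu_B$ with $\mu_C$ in $\HH^2$ after transport to $A$, which is built into the definition of equivalence of curved Morita deformations in \cite{MoritacDef}.
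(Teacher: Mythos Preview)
Your approach matches the paper's: transport the recollement and Yoneda extension from $D(B)$ to $D(A)$ along $F_X$ (the paper writes $i_A=iF_X$, $\I_A=F_X^{-1}\I$, etc.), and for the equivalence statement use that $-\otimes^{\operatorname{L}}_{B_\varepsilon}X_\varepsilon$ is a $\ke$-linear quasi-equivalence of 1-derived categories compatible with the semiorthogonal decompositions. The paper simply cites \cite[Proposition 3.6, Lemma 3.5]{MoritacDef} for the latter two facts and then observes, as you do in your second paragraph, that $\ke$-linearity forces the Yoneda extensions (which are built from the action of $t$) to be preserved.

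Your final paragraph is an unnecessary detour. The direct argument you already gave---that tensoring with the $\ke$-free bimodule $X_\varepsilon$ preserves the short exact sequences $0\to tM\to\Ker t_M\to\Ker t_M/tM\to 0$ and its companion---is exactly what the paper means by ``the equivalences are $\ke$-linear and the Yoneda extensions are defined in terms of the action of $t$,'' and it already handles the transport to $D(A)$ once you use that $X_\varepsilon/tX_\varepsilon$ is compatible with $X$ and $Y$. Routing instead through Lemma \ref{recovermap} and Theorem \ref{classisclass} to compare Hochschild classes is valid, but it essentially re-derives the computation carried out in the proof of Theorem \ref{mainth}, and the claim that the matching of $\mu_B$ and $\mu_C$ is ``built into the definition'' overstates things: it is a consequence of the well-definedness of $\nu$ proved in \cite{MoritacDef}, not the definition itself.
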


\begin{proof}
    By Theorem \ref{classisclass}, the category $D^\varepsilon(A_\varepsilon)$ is a categorical deformation of $D(B)$, i.e. there exists a recollement 
\[\begin{tikzcd}
	{D(B)} & {D^\varepsilon(B_\varepsilon)} & {D(B)}
	\arrow["i"{description}, from=1-1, to=1-2]
	\arrow["K", shift left=3, from=1-2, to=1-1]
	\arrow["Q"', shift right=3, from=1-2, to=1-1]
	\arrow["\I"{description}, from=1-2, to=1-3]
	\arrow[shift left=3, from=1-3, to=1-2]
	\arrow["G"', shift right=3, from=1-3, to=1-2]
\end{tikzcd}\]
together with two triangles  \begin{equation}\label{trianglesYoneda}\I  \to K \to C \to \I[1]\text{ and }C\to  Q \to \I \to C[1].\end{equation} 

We can thus define \[
i_A=iF_X\text{, }G_A=GF_X\text{, } \I_A=F_X^{-1}\I \text{, }K_A=F_X^{-1}L\text{, }Q_A=F_X^{-1}Q
\] to obtain a recollement
\[\begin{tikzcd}
	{D(A)} & {D^\varepsilon(B_\varepsilon)} & {D(A);}
	\arrow["{i_A}"{description}, from=1-1, to=1-2]
	\arrow["{K_A}", shift left=3, from=1-2, to=1-1]
	\arrow["{Q_A}"', shift right=3, from=1-2, to=1-1]
	\arrow["{\I_A}"{description}, from=1-2, to=1-3]
	\arrow[shift left=3, from=1-3, to=1-2]
	\arrow["{G_A}"', shift right=3, from=1-3, to=1-2]
\end{tikzcd}\] similarly one can compose the triangles \eqref{trianglesYoneda} with $F_X^{-1}$ to obtain the relevant Yoneda extension, and thus show that $D^\varepsilon(B_\varepsilon)$ is indeed a deformation of $D(A)$.

To show that equivalent curved deformations yield equivalent categorical deformations, assume that $Z_\varepsilon$ is an equivalence between two Morita deformations $B_\varepsilon$ and $C_\varepsilon$. Then by \cite[Proposition 3.6]{MoritacDef} there is an induced equivalence $D^\varepsilon(B_\varepsilon)\cong D^\varepsilon(C_\varepsilon)$ which, by \cite[Lemma 3.5]{MoritacDef} is compatible with the semiorthogonal decompositions; it is straightforward to check that this is also an equivalence of deformations -- this essentially follows from the fact that the equivalences are $\ke$-linear and the Yoneda extensions are defined in terms of the action of $t$. 
\end{proof}

We can now tie everything together: 
\begin{Thm}\label{mainth}
    Let $A$ be a dg algebra. There is a commutative square of bijections
\[\begin{tikzcd}
	{\cdef_A(\ke)} & {\HH^2(A)} \\
	{\CatDef_{D(A)}(\ke)} & {\HH^2(D(A))}
	\arrow["\nu", from=1-1, to=1-2]
	\arrow["{D^\varepsilon(-)}"', from=1-1, to=2-1]
	\arrow["{\chi_A}", from=1-2, to=2-2]
	\arrow["\mu_{D(A)}", from=2-1, to=2-2]
\end{tikzcd}\]
    \end{Thm}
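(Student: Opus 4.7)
The plan is to first verify bijectivity of three of the four arrows and then deduce bijectivity of the fourth from commutativity of the square. The map $\nu$ is a bijection by the main result of \cite{MoritacDef}; the map $\mu_{D(A)}$ is a bijection by Theorem \ref{HH2Defo}; and the map $\chi_A$ is a bijection by the derived Morita invariance of Hochschild cohomology (cf. \cite{KellerInvariance}, together with the fact, recorded just before Theorem \ref{classisclass}, that $\chi_A$ computes the comparison map $\HH^\bullet(A)\to \HH^\bullet(D(A))$). Granting commutativity, the proposition preceding the theorem — which ensures that $D^\varepsilon(-)$ is well-defined — together with the commutativity forces $D^\varepsilon(-)$ to be a bijection as well.

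For commutativity, I would first treat the ``untwisted'' case where the Morita bimodule is $X=A$, so that $(A_\varepsilon, A) \in \cdef_A(\ke)$. In that case $\nu(A_\varepsilon, A)=\mu_A$ tautologically, and $D^\varepsilon(A_\varepsilon)$ is the categorical deformation of $D(A)$ with its natural recollement. The identity $\mu_{D(A)}(D^\varepsilon(A_\varepsilon))=\chi_A(\mu_A)$ is then precisely Theorem \ref{classisclass}.

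To reduce the general case to the untwisted one, let $(B_\varepsilon,X)$ be a curved Morita deformation and set $B:=B_\varepsilon\otimes_\ke k$. Let $F_X:D(A)\to D(B)$ denote the Morita equivalence, and let $F_X^*\colon \HH^2(D(B))\xrightarrow{\sim}\HH^2(D(A))$ be the induced isomorphism on Hochschild cohomology (obtained by conjugating a natural transformation $\id_{D(B)}[-1]\to \id_{D(B)}[1]$ by $F_X$ and $F_X^{-1}$). Inspection of the construction in the proposition preceding the theorem shows that the recollement data $(i_A,Q_A,K_A,\I_A,G_A)$ on $D^\varepsilon(B_\varepsilon)$ is obtained from the original recollement over $D(B)$ by pre/post-composing with $F_X$ and $F_X^{-1}$; in particular the Ext-class $\iota_A$ is $F_X^{-1}\iota F_X$, so
\[
\mu_{D(A)}(D^\varepsilon(B_\varepsilon)) \;=\; F_X^*\bigl(\mu_{D(B)}(D^\varepsilon(B_\varepsilon))\bigr) \;=\; F_X^*(\chi_B(\mu_B)),
\]
where the second equality uses Theorem \ref{classisclass} applied over $B$. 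Comparison with the ``right-then-down'' path $\chi_A(\nu(B_\varepsilon,X))=\chi_A(\varphi_X(\mu_B))$ reduces commutativity of the square to the identity
\[
\chi_A\circ \varphi_X \;=\; F_X^*\circ \chi_B \qquad\text{on } \HH^2(B).
\]

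This last naturality statement is what I expect to be the main technical step. It expresses that the characteristic morphism $\chi$ is compatible with derived Morita equivalence: concretely, the $A_\infty$-algebra isomorphism $\Hm{B}(X,X)\simeq A$ underlying the Morita equivalence induces compatible isomorphisms of Hochschild complexes $\varphi_X$ on the ``small'' side and $F_X^*$ on the ``large'' side, and both are instances of the functoriality of $C^\bullet(-)$ under quasi-isomorphisms respectively under Morita equivalences. The cleanest way to see it is to factor $\varphi_X$ as $\HH^2(B)\cong \HH^2(\Hm{B}(X,X))\to \HH^2(A)$ using the dg-enhancement $\operatorname{Tw}$, then invoke the description of $\chi_A$ from the discussion preceding Theorem \ref{classisclass} (extension of $A_\infty$-functors along the Yoneda embedding) to recognize both compositions $\chi_A\circ\varphi_X$ and $F_X^*\circ\chi_B$ as the canonical comparison map $\HH^\bullet(B)\to \HH^\bullet(D(A))$ induced by the chain of quasi-equivalences $B\to \Hm{B}(X,X) \leftarrow A \hookrightarrow D(A)$. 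This is a routine but unavoidable diagram chase; alternatively one may cite \cite{KellerInvariance} directly, since $F_X^*\circ \chi_B$ and $\chi_A\circ\varphi_X$ are both characterized there as the unique map realizing Morita invariance at the level of the Hochschild complex.
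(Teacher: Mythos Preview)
Your proposal is correct and follows essentially the same route as the paper: reduce to commutativity, apply Theorem \ref{classisclass} over $B$, and transport along the Morita equivalence via the conjugation isomorphism you call $F_X^*$ (the paper calls it $\varphi_{F_X}$), invoking the naturality $\chi_A\circ\varphi_X=\varphi_{F_X}\circ\chi_B$. The paper simply asserts this last compatibility square without the diagram chase you outline, and it does not bother to single out the untwisted case $X=A$ separately, but the logical structure is the same.
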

\begin{proof}
    We already know the arrows $\chi_A$, $\mu_{D(A)}$ and $\nu$ are bijections, so only 
    the commutativity of the diagram remains to be shown. Let $(B_\varepsilon, X)$ be a curved Morita deformation of $A$; then $D^\varepsilon(B_\varepsilon)$ is a categorical deformation of $D(A)$, and we want to compute its associated class in $\HH^2(D(A))$. The equivalence $F_X$ defines a bijection $\HH^2(D(B))\tow{\varphi_{F_X}} \HH^2(D(A))$: given a class \[
    \id_{D(B)}[-1]\tow{\eta_B}\id_{D(B)}[1]
    \] in $\HH^2(D(B))$, we define $\varphi_{F_X}(\eta_B)$ as the class \[
    \id_{D(A)}[-1]\cong F_X^{-1}\id_BF_X[-1]\tow{F_X^{-1}\eta_B F_X} F_X^{-1}\id_BF_X[-1]\cong \id_{D(A)}[-1]
    \] in $\HH^2(D(A))$;  this is compatible with the morphism $\chi_A$, in the sense that the diagram 
\begin{equation*}\label{HHcomm}
\begin{tikzcd}
	{\HH^2(B)} & {\HH^2(A)} \\
	{\HH^2(D(B))} & {\HH^2(D(B))}
	\arrow["{\varphi_X}", from=1-1, to=1-2]
	\arrow["{\chi_B}"', from=1-1, to=2-1]
	\arrow["{\chi_A}", from=1-2, to=2-2]
	\arrow["{\varphi_{F_X}}", from=2-1, to=2-2]
\end{tikzcd}\end{equation*}
 commutes. Recall that $\mu_B\in \HH^2(B)$ is the class corresponding to the cdg deformation $B_\varepsilon$ of $B$. By construction, one has that \[\mu_{D(A)}(D^\varepsilon(B_\varepsilon))=\varphi_{F_X}\circ \mu_{D(B)}(D^\varepsilon(B_\varepsilon)).\] Hence, by Theorem \ref{classisclass}, we get \[
 \mu_{D(A)}(D^\varepsilon(B_\varepsilon))=\varphi_{F_X}(\chi_B(\mu_B)){=}\chi_A(\varphi_X(\mu_B))=\chi_A(\nu(B_\varepsilon))
 \] and we are done.
\end{proof}

\section{Addendum: coherent complexes and higher actions}\label{coherentstuff}
As already discussed, our definition of categorical deformation only requires working with the homotopy categories of the various functor categories, without considering their enhancements. This allowed us to give fairly direct constructions, but also has its downsides. Namely, our theory is well-equipped to decide whether two deformations are equivalent, but less so to describe the group of autoequivalences of a deformation. A notion of morphism of deformation more apt to this problem would entail having a functor which induces in an appropriate sense a morphism of Yoneda extensions (see Proposition \ref{morphext}). The issue is that, in our setting, it is unreasonable for a functor to commute with the boundary object $C$, since that is often only defined up to a noncanonical isomorphism. For this construction to work, one would need a fully enhanced notion of categorical deformation.

Indeed, one would want to define a Yoneda extension of functors as a sequence \[
0\to \I \tow{\delta_1}K \tow{\alpha}Q \tow{\delta_2}\I  \to 0
\] with $\alpha\delta_1=\delta_2\alpha$ with the property that $\alpha$ induces an isomorphism $\Bar{\alpha}$ between the cone $C$ of $\delta_1$ and the cocone $D$ of $\delta_2$, as in \[\begin{tikzcd}[ampersand replacement=\&]
	\I \& K \& C \\
	D \& Q \& \I.
	\arrow["{\delta_1}", from=1-1, to=1-2]
	\arrow[from=1-2, to=1-3]
	\arrow["\alpha"'{pos=0.1}, from=1-2, to=2-2]
	\arrow["{\bar{\alpha}}"'{pos=0.2}, out=275, in=100, dashed, from=1-3, to=2-1]
	\arrow[from=2-1, to=2-2]
	\arrow["{\delta_2}", from=2-2, to=2-3]
\end{tikzcd}\] Even though one can show that $\alpha\delta_1=\delta_2\alpha$ the issue is that, if we remain in the homotopy category, the Toda bracket $\langle \delta_1, \alpha, \delta_2 \rangle$ is an obstruction to the existence of any map $\Bar{\alpha}$ as in the diagram. This problem can be solved by committing to the higher categorical world; indeed, sequences with vanishing Toda Brackets are known \cite{ariottacoh} to correspond to \emph{coherent complexes}. To be more precise, one can consider the data of a recollement, together with the following morphisms in the (enhanced) functor category:
\begin{itemize}
    \item A sequence of morphisms \begin{equation}\label{enhancedextension}
       0\to \I \tow{\delta_1}K\tow{\alpha} Q \tow{\delta_2} \I \to 0;
         \end{equation}
    \item A nullhomotopy $t$ for the composition $\alpha\delta_1$ and a nullhomotopy $s$ for $\delta_2\alpha$;
    \item A homotopy $H$ between $\delta_2t$ and $s\delta_1$.
\end{itemize}
This data defines a canonical map $\Bar{\alpha}$ between the cone of $\delta_1$ and the cocone of $\delta_2$; a categorical deformation would then be given by the above data, under the condition for $\Bar{\alpha}$ to be an equivalence; note that this is a way to express the ``exactness'' of the sequence \eqref{enhancedextension}. Any such construction yields, by passing to the homotopy category, a Yoneda extension in the sense of Section \ref{Yoneda}. Highlighting the role of the natural transformations $\delta_1$ and $\delta_2$ also hints at the sense in which the category $\T_\varepsilon$ ought to be considered $\ke$-linear. Indeed, under the adjunctions between $i$ and $K$ and $Q$, the two natural transformations \[
\I \tow{\delta_2}K \text{ and } Q \tow{\delta_2} E
\] correspond to natural transformations \[
i\I \to \id_{\T_\varepsilon}\to iE.
\] This should be though as the categorification of the two natural transformations \[
tM \hookrightarrow M\tow{t} tM
\] where $M$ is any $\ke$-module; hence, $\ke$ ``acts'' on the category $\T_\varepsilon$ via the action of the endofunctor $i\I$, with the various natural transformations acting as compatibility conditions. 

In order to work effectively with these objects, however, our framework 
has to be recast in the language of $\infty$-category theory;  a full treatment of these, more delicate, higher aspects is thus postponed to future work. 
Such a homotopical treatment of the theory will also be one of the necessary ingredients in showing that, when appropriately generalized, categorical deformations are \emph{the} formal moduli problem associated to the Hochschild complex. The results in this paper should indeed be thought of as describing -- via first order categorical deformations -- the tangent space to this deformation problem. 

We conclude by observing that further, leaving the model of dg categories would allow to give a fairly straightforward description of \emph{absolute}, or non-linear deformations in terms of topological Hochschild cohomology, which should in turn be compared with \cite[Section 5]{KaledinLowen}\cite{KaledinLowenBooth2, KaledinLowenBooth1}.

\appendix
\section{Deformations and smoothness}\label{appendix}
As an application, we show that, unlike in the classical case, smoothness is preserved under our new type of deformations. Recall that a dg category $\T$ is said to be smooth if the identity bimodule $\id_\T\in D(\op{\T}\otimes \T)$ is perfect. \begin{Prop}\label{defosmooth}
    Let $\T_\varepsilon$ be a categorical deformation of a triangulated category $\T$. Then $\T_\varepsilon$ is smooth if and only if $\T$ is smooth; the same statement holds for properness.
\end{Prop}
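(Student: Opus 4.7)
The plan is to exploit two triangles: the one from Corollary \ref{triacor},
\[\id_\T[-1] \to \id_\T[1] \to KG[1] \to \id_\T\]
in $D(\op{\T}\otimes\T)$, and the recollement triangle
\[G\I \to \id_{\T_\varepsilon} \to iQ \to G\I[1]\]
in $D(\op{\T_\varepsilon}\otimes\T_\varepsilon)$. The key preliminary observation is that, by two-out-of-three for perfects, perfection of $\id_\T$ immediately propagates through the first triangle to perfection of the bimodule $\phi := KG[1]$.

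For the forward implication of smoothness, assume $\T$ is smooth. By \S\ref{welldef} we may represent $\T_\varepsilon$ (up to equivalence of deformations) by the gluing $\T\times_{\phi}\T$. Using the explicit formulas of \S\ref{gluing}, the $\T_\varepsilon$-bimodules $iQ$ and $G\I$ are expressible as tensor products involving the perfect bimodules $\phi$ and $\id_\T$ together with the quasi-functors $i,G,Q,\I$, all of which are perfect as bimodules by virtue of admitting both adjoints in the recollement. Hence $iQ$ and $G\I$ are perfect $\T_\varepsilon$-bimodules, and the second triangle displays $\id_{\T_\varepsilon}$ as an extension of perfect objects, proving $\T_\varepsilon$ smooth.

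For the reverse implication, use that $G$ is fully faithful with right adjoint $\I$, so $\I G \cong \id_\T$. Consequently $\id_\T \cong \I \otimes^{\operatorname{L}}_{\T_\varepsilon} \id_{\T_\varepsilon} \otimes^{\operatorname{L}}_{\T_\varepsilon} G$, and the functor $M \mapsto \I M G$ preserves perfection by the same considerations as before, so perfection of $\id_{\T_\varepsilon}$ descends to perfection of $\id_\T$.

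Properness is handled analogously, replacing perfection of bimodules by perfection of hom-complexes over $k$: the recollement triangle combined with the semiorthogonal decomposition decomposes each $\Hm{\T_\varepsilon}(X,Y)$ into iterated extensions of $\Hm{\T}(-,-)$-complexes and evaluations of $\phi$, which are themselves pointwise perfect $k$-complexes since perfection of $\id_\T$ implies perfection of all its values; conversely, properness descends through the fully faithful embedding $G$. The principal technical obstacle is to make rigorous the claim that the constituent bimodules $i, G, Q, \I, K$ are themselves perfect, which is what enables perfection to be carried back and forth along the various tensor operations; this is what turns the transparent triangle-level argument into an actual proof.
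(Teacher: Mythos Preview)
Your core observation---that the triangle of Corollary \ref{triacor} forces the gluing bimodule $\phi = KG[1]$ to be perfect once $\id_\T$ is---is exactly the key step in the paper's proof. However, the paper does not attempt to argue directly about perfection of $iQ$ and $G\I$ inside $D(\op{\T_\varepsilon}\otimes\T_\varepsilon)$. Instead, both implications are reduced to \cite[Proposition 4.9]{Kuznetsov_2014}, which states precisely that a gluing $\A\times_\phi\B$ is smooth (resp.\ proper) if and only if $\A$ and $\B$ are smooth (resp.\ proper) and $\phi$ is perfect. With this black box in hand, the triangle from Corollary \ref{triacor} finishes the argument in one line.

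Your attempt to bypass this citation has a genuine gap, which you yourself flag at the end: the assertion that $i, G, Q, \I, K$ are perfect bimodules ``by virtue of admitting both adjoints'' is simply false in general. Having both adjoints is a statement about the functor on module categories preserving (co)limits, not about compactness of the underlying bimodule; the diagonal bimodule of any non-smooth category is an immediate counterexample. Likewise, the functor $M \mapsto \I M G$ does not preserve perfects for free. What is actually needed---and what \cite[Proposition 4.9]{Kuznetsov_2014} provides---is a careful analysis of how perfects in $D(\op{\T_\varepsilon}\otimes\T_\varepsilon)$ decompose relative to the semiorthogonal decomposition, and this is exactly the ``technical obstacle'' you identify but do not resolve. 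So your argument is a correct outline of the Kuznetsov--Lunts result rather than a proof of the proposition, and the paper's approach of invoking that result directly is both shorter and complete.
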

\begin{proof}
    One implication follows directly from \cite[Proposition 4.9]{Kuznetsov_2014}, since $\T$ is the only semiorthogonal factor of $\T_\varepsilon$. For the other implication, we know again by \cite[Proposition 4.9]{Kuznetsov_2014} that, being $\T$ smooth, the gluing $\T_\varepsilon$ is smooth if and only if the gluing bimodule $KG[1]\in D(\op{\T}\otimes \T)$ is perfect. Since $\T$ is smooth, the diagonal bimodule $\id_\T\in D(\op{\T}\otimes \T)$ is perfect. From the existence of the triangle \[
    \id_\T[-1]\tow{\mu_\T(\T_\varepsilon)}\id_\T[1] \to KG[1] \to \id_\T
    \] of Corollary \ref{triacor} we deduce that $KG[1]$ is also perfect. The same argument holds for properness.
\end{proof}
In fact, the proof says something more: the functor $KG$ is obtained as a finite extension of copies of $\id_\T$, so it is ``as perfect as'' $\id_\T$. This, in turn, conveys the idea that $\T_\varepsilon$ is ``as singular as'' $\T$. From this, we obtain a more conceptual proof of \cite[Proposition 6.8]{Nder}:
\begin{Prop}
    If $A$ is a homologically smooth dg algebra and the deformation $A_\varepsilon$ is uncurved (i.e. a dg algebra), then $D^\varepsilon(A_\varepsilon)$ is a categorical resolution of $D(A_\varepsilon)$. 
\end{Prop}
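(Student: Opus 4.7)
The plan is to reduce the statement to Proposition \ref{defosmooth}, leveraging the categorical deformation structure on $D^\varepsilon(A_\varepsilon)$ established in Theorem \ref{classisclass}, together with the already known localization functor $D^\varepsilon(A_\varepsilon) \to D(A_\varepsilon)$ from \cite{Nder}. The conceptual improvement over the original proof in \cite[Proposition 6.8]{Nder} is that smoothness is no longer checked by a direct computation with the gluing bimodule, but becomes a formal consequence of the abstract deformation framework.

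First I would verify the smoothness of $D^\varepsilon(A_\varepsilon)$. Since $A$ is homologically smooth, $D(A)$ is a smooth dg category. By Theorem \ref{classisclass}, the category $D^\varepsilon(A_\varepsilon)$ is a categorical deformation of $D(A)$ in the sense of Definition \ref{defdef}. Proposition \ref{defosmooth} then yields immediately that $D^\varepsilon(A_\varepsilon)$ is smooth as well. Morally, the triangle from Corollary \ref{triacor} presents the gluing bimodule $KG[1]$ as an extension of two copies of $\id_{D(A)}$, so it inherits perfectness from $\id_{D(A)}$.

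Next, since $A_\varepsilon$ is assumed to be uncurved, the classical derived category $D(A_\varepsilon)$ is well-defined, and there is a natural localization $D^\varepsilon(A_\varepsilon) \to D(A_\varepsilon)$ obtained by further quotienting $D^\varepsilon(A_\varepsilon)$ by the absolutely acyclic $A_\varepsilon$-modules. To conclude the categorical resolution statement in the sense of Kuznetsov--Lunts, it suffices to exhibit a fully faithful right adjoint to this localization, at least after restriction to the perfect subcategory of $D(A_\varepsilon)$. This right adjoint is provided by a cofibrant replacement in the model structure of \cite[Section 8]{Nder}, and the fully-faithfulness for perfect objects is precisely the content of \cite[Proposition 6.8]{Nder}: semifree resolutions of perfect $A_\varepsilon$-modules have the property that their image in $D^\varepsilon(A_\varepsilon)$ is not further collapsed by the localization.

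The main obstacle -- relative to a purely internal application of our framework -- is that the classical derived category $D(A_\varepsilon)$ does not appear as a semiorthogonal factor in the recollement of Theorem \ref{classisclass}. As remarked in Section 6, we have no intrinsic description of the quotient $D^\varepsilon(A_\varepsilon) \to D(A_\varepsilon)$ purely in terms of categorical deformation data. Consequently, while smoothness becomes automatic, the verification of the adjoint fully-faithfulness condition still needs the module-theoretic input from \cite{Nder}; giving a purely categorical proof of this last step would require the more refined, enhanced notion of categorical deformation sketched in \S \ref{coherentstuff}, and is left to future work.
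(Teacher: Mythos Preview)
Your smoothness argument is exactly the paper's: smoothness of $A$ gives smoothness of $D(A)$, Theorem \ref{classisclass} makes $D^\varepsilon(A_\varepsilon)$ a categorical deformation of $D(A)$, and Proposition \ref{defosmooth} transfers smoothness. Good.

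The second half, however, has a circularity problem. You set up the localization $D^\varepsilon(A_\varepsilon)\to D(A_\varepsilon)$ and then appeal to \cite[Proposition 6.8]{Nder} for the fully-faithfulness of its right adjoint on perfect objects. But \cite[Proposition 6.8]{Nder} \emph{is} the statement being reproved here --- the whole point of the present proposition is to give a more conceptual argument for it --- so invoking it as input defeats the purpose. The paper avoids this by citing a different, logically prior result from \cite{Nder}: it uses \cite[Corollary 3.10]{Nder}, which already supplies a fully faithful embedding $D(A_\varepsilon)\hookrightarrow D^\varepsilon(A_\varepsilon)$. With that embedding in hand and smoothness of $D^\varepsilon(A_\varepsilon)$ established, the categorical resolution statement follows immediately; there is no need to pass through the localization picture or to discuss adjoints on perfect objects.

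Your final paragraph about the lack of an intrinsic description of the quotient $D^\varepsilon(A_\varepsilon)\to D(A_\varepsilon)$ is a fair remark (and echoes the paper's own aside in \S 6), but it is commentary rather than proof, and in any case the difficulty it flags does not arise once you use the embedding from \cite[Corollary 3.10]{Nder} directly. Replace your second and third paragraphs with that single citation and the proof is complete and matches the paper's.
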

\begin{proof}
    Fist of all, observe that since $A$ is homologically smooth the same holds for $D(A)$. It is shown in \cite[Corollary 3.10]{Nder} that there is an embedding \[
    D(A_\varepsilon) \hookrightarrow D^\varepsilon(A_\varepsilon);
    \] since $D^\varepsilon(A_\varepsilon)$ is a categorical deformation of $D(A)$, by Proposition \ref{defosmooth} it is homologically smooth and hence a categorical resolution of $D(A_\varepsilon)$.
\end{proof}

It is reasonable to see $D^\varepsilon(A_\varepsilon)$ as a \emph{blowup} of $D(A_\varepsilon)$. Indeed, the procedure of moving from $D(A_\varepsilon)$ to $D^\varepsilon(A_\varepsilon)$ only resolves the singularity coming from the presence of the nilpotent deformation parameter; any singularity ``away from $0$'' will still be present in $D^\varepsilon(A_\varepsilon)$. This procedure offers some insight into the relation between the classical notion of deformation for triangulated categories (\cite{DAGX, Tst3, Blanc_Katzarkov_Pandit_2018}) and our categorical deformations. The point is that, given a classical deformation, its blowup (in an appropriate sense) is a categorical deformation. One can use this setup to investigate the natural question of whether -- in specific cases -- classical deformations span the whole Hochschild complex and, in cases where they don't, which part of the Hochschild complex they do span. This question can be reformulated as asking which classes of categorical deformations can be obtained as blowups of classical deformations. This is useful in practice, since it allows one access to properties and invariants of the categorical deformation. In future work we will further explore this perspective, in particular in relation to the results from \cite{Tst3}.

\printbibliography

@book{HA,
    title = {Higher Algebra},
    author = {Jacob Lurie},
    year = {2017},
    url = { http://people.math.harvard.edu/~lurie/papers/HA.pdf},
}

@misc{DAGX,
author = {Jacob Lurie},
year={2011},
title={Derived Algebraic Geometry X: Formal Moduli Problems},
url = { https://www.math.ias.edu/~lurie/papers/DAG-X.pdf},
note={Available on the author's webpage.}
}

@Article{genovese2015adjunctions,
 Author = {Genovese, Francesco},
 Title = {Adjunctions of quasi-functors between dg-categories},
 FJournal = {Applied Categorical Structures},
 Journal = {Appl. Categ. Struct.},
 Volume = {25},
 Number = {4},
 Year = {2017},
 DOI = {10.1007/s10485-016-9470-y},

}

@Article{BLL_pretriangulated,
 Author = {Bondal, Alexey I. and Larsen, Michael and Lunts, Valery A.},
 Title = {Grothendieck ring of pretriangulated categories},
 FJournal = {IMRN. International Mathematics Research Notices},
 Journal = {Int. Math. Res. Not.},
 Volume = {2004},
 Number = {29},
 Year = {2004},
 DOI = {10.1155/S1073792804140385},

}

@Article{toen_derived_morita,
 Author = {To{\"e}n, Bertrand},
 Title = {The homotopy theory of dg-categories and derived {Morita} theory}, Journal = {Invent. Math.},
 Volume = {167},
 Number = {3},
 Year = {2007},
 DOI = {10.1007/s00222-006-0025-y},
}

@InCollection{kellerICM,
 Author = {Keller, Bernhard},
 Title = {On differential graded categories},
 BookTitle = {Proceedings of the international congress of mathematicians (ICM), Madrid, Spain, August 22--30, 2006. Volume II: Invited lectures},
 
 Year = {2006},
 Publisher = {Z{\"u}rich: European Mathematical Society (EMS)},
}

@misc{LowenCurvature,
      title={The curvature problem for formal and infinitesimal deformations}, 
      label={LVdB},
      author={Wendy Lowen and Michel {Van den Bergh}},
      year={2015},
      eprint={1505.03698},
      archivePrefix={arXiv},
}

@Article{MoritaDef,
 Author = {Keller, Bernhard and Lowen, Wendy},
 Title = {On {Hochschild} cohomology and {Morita} deformations},
 FJournal = {IMRN. International Mathematics Research Notices},
 Journal = {Int. Math. Res. Not.},
 Volume = {2009},
 Number = {17},
 Year = {2009},
 DOI = {10.1093/imrp/rnp050},
}

@misc{KellerInvariance,
  
  author = {Keller, Bernhard},
 
  title = {Derived Invariance of higher structures on the Hochschild complex},
  URL={https://webusers.imj-prg.fr/~bernhard.keller/publ/dih.pdf},
  year = {2003},
note={Available on the autor's webpage},
}

@article{Positselski_2018,

	year = 2018,
	
	author = {Leonid Positselski},
  
	title = {Weakly curved $A_\infty$ algebras over a topological local ring},
  doi = {10.24033/msmf.467},
volume = {159},


	journal = {Mem. Soc. Math. Fr.}
}

@article{keller2009nonvanishing,
title = {On the (non)vanishing of some “derived” categories of curved dg algebras},
journal = {	J. Pure Appl. Algebra},
volume = {214},
number = {7},
year = {2010},
doi = {10.1016/j.jpaa.2009.10.011},
author = {Bernhard Keller and Wendy Lowen and Pedro Nicolás},
}

@article{Kuznetsov_2014,
author = {Kuznetsov, Alexander and Lunts, Valery},
year = {2012},
title = {Categorical Resolutions of Irrational Singularities},
volume = {2015},
journal = {Int. Math. Res. Not.},
doi = {10.1093/imrn/rnu072}
}

@article{LOWEN2013441,
author = {Lowen, Wendy and Van den Bergh, Michel},
year = {2012},
title = {On compact generation of deformed schemes},
label={LVdB},
volume = {244},
journal = {Adv. Math.},
doi = {10.1016/j.aim.2013.04.024}
}

@article{Blanc_Katzarkov_Pandit_2018, title={Generators in formal deformations of categories}, volume={154}, DOI={10.1112/S0010437X18007303}, number={10}, journal={Compos. Math.}, author={Blanc, Anthony and Katzarkov, Ludmil and Pandit, Pranav}, year={2018}}

@misc{KaledinLowenBooth1,
title={Topological Hochschild cohomology for schemes}, 
      author={Kaledin, Dmitry and Lowen, Wendy and Booth, Matt},
      note={In preparation},
}

@misc{KaledinLowenBooth2,
title={Mac Lane cohomology and square-zero extensions of abelian categories}, 
      author={Kaledin, Dmitry and Lowen, Wendy and Booth, Matt},
      note={In preparation},
}

@misc{Tst3,
      title={Deformations of triangulated categories with t-structures via derived injectives}, 
      author={Francesco Genovese and Wendy Lowen and Julie Symons and Michel Van den Bergh},
      year={2024},
      eprint={2411.15359},
      archivePrefix={arXiv},
      url={https://arxiv.org/abs/2411.15359},
    label={GLSVdB}
}

@article{KaledinLowen,
author = {Kaledin, Dmitry and Lowen, Wendy},
year = {2011},
title = {Cohomology of exact categories and (non-)additive sheaves},
volume = {272},
journal = {Adv. Math.},
doi = {10.1016/j.aim.2014.11.016}
}

@misc{Nder,
      title={Filtered derived categories of curved deformations}, 
      author={Alessandro Lehmann and Wendy Lowen},
      year={2024},
      eprint={2402.08660},
      archivePrefix={arXiv},
}

@Article{lowchar,
Author = {Lowen, Wendy},
 Title = {Hochschild cohomology, the characteristic morphism and derived deformations},
 Journal = {Compos. Math.},
 Volume = {144},
 Number = {6},
 Year = {2008},
 DOI = {10.1112/S0010437X08003655},
}

@article{MoritacDef,
      title={Hochschild cohomology parametrizes curved Morita deformations}, 
      Journal = {Proc. Am. Math. Soc.},
      FJournal = {Proceedings of the American Mathematical Society},
      DOI={10.1090/proc/17133},
      author={Alessandro Lehmann},
      year={2024},
    }

@Article{Ainffunct,
 Author = {Canonaco, Alberto and Ornaghi, Mattia and Stellari, Paolo},
 Title = {Localizations of the category of {{\(A_\infty\)}} categories and internal {Homs}},
 FJournal = {Documenta Mathematica},
 Journal = {Doc. Math.},
 Volume = {24},
 Year = {2019},
 DOI = {10.25537/dm.2019v24.2463-2492},
}

@phdthesis{lefèvrehasegawa2003surlesainfinicategories,
  TITLE = {{Sur les $A_\infty$ cat{\'e}gories}},
  AUTHOR = {Lef{\`e}vre-Hasegawa, Kenji},
  URL = {https://theses.hal.science/tel-00007761},
  SCHOOL = {{Universit{\'e} Paris-Diderot - Paris VII}},
  YEAR = {2003},
  MONTH = Nov,
  
  TYPE = {Theses},
  PDF = {https://theses.hal.science/tel-00007761v1/file/tel-00007761.pdf},

}

@Article{dyckerhoffsphericalrelative,
 Author = {Dyckerhoff, Tobias and Kapranov, Mikhail and Schechtman, Vadim and Soibelman, Yan},
label={DKSS},
 Title = {Spherical adjunctions of stable {{\(\infty \)}}-categories and the relative {S}-construction},
 FJournal = {Mathematische Zeitschrift},
 Journal = {Math. Z.},
 Volume = {307},
 Number = {4},
 Note = {Id/No 73},
 Year = {2024},

 DOI = {10.1007/s00209-024-03549-x},

}

@Article{HaidenCY,
 Author = {Haiden, Fabian},
 Title = {3-d {Calabi}-{Yau} categories for {Teichm{\"u}ller} theory},
 FJournal = {Duke Mathematical Journal},
 Journal = {Duke Math. J.},
 Volume = {173},
 Number = {2},
 Year = {2024},
 DOI = {10.1215/00127094-2023-0016},

}

@misc{COS2,
      title={Localizations of the categories of $A_\infty$ categories and internal Homs over a ring}, 
      author={Alberto Canonaco and Mattia Ornaghi and Paolo Stellari},
      year={2024},
      eprint={2404.06610},
      archivePrefix={arXiv},
      url={https://arxiv.org/abs/2404.06610}, 
}

@Book{ariottacoh,
 Author = {Ariotta, Stefano},
 Title = {Coherent cochain complexes and {Beilinson} {{\(t\)}}-structures},
 Year = {2022},
 Publisher = {M{\"u}nster: Univ. M{\"u}nster, Mathematisch-Naturwissenschaftliche Fakult{\"a}t, Fachbereich Mathematik und Informatik (Diss.)},

}

@InCollection{KontHMS,
 Author = {Kontsevich, Maxim},
 Title = {Homological algebra of mirror symmetry},
 BookTitle = {Proceedings of the international congress of mathematicians, ICM '94, August 3-11, 1994, Z\"urich, Switzerland. Vol. I},
 Year = {1995},
 Publisher = {Basel: Birkh{\"a}user},

}

@Article{WellGenRamos,
 Author = {Lowen, Wendy and Ramos Gonz{\'a}lez, Julia},
 Title = {On the tensor product of well generated dg categories},
 FJournal = {Journal of Pure and Applied Algebra},
 Journal = {J. Pure Appl. Algebra},
 Volume = {226},
 Number = {3},
 Note = {Id/No 106843},
 label={LRG},
 Year = {2022},
 DOI = {10.1016/j.jpaa.2021.106843},
}

@misc{christ2024laxadditivity,
      title={Lax Additivity}, 
      author={Merlin Christ and Tobias Dyckerhoff and Tashi Walde},
      year={2024},
      eprint={2402.12251},
      archivePrefix={arXiv},
}

@Article{AnnoBarDer,
 Author = {Anno, Rina and Logvinenko, Timothy},
 Title = {Bar category of modules and homotopy adjunction for tensor functors},
 FJournal = {IMRN. International Mathematics Research Notices},
 Journal = {Int. Math. Res. Not.},
 Volume = {2021},
 Number = {2},
 Year = {2021},
 DOI = {10.1093/imrn/rnaa066},

}

@Article{AnnoSph,
 Author = {Anno, Rina and Logvinenko, Timothy},
 Title = {Spherical {DG}-functors},
 FJournal = {Journal of the European Mathematical Society (JEMS)},
 Journal = {J. Eur. Math. Soc. (JEMS)},
 Volume = {19},
 Number = {9},
 Year = {2017},
 Language = {English},
 DOI = {10.4171/JEMS/724},

}

\end{document}